\title{When is the \'etale open topology a field topology?}
\author{Philip Dittmann}
\address{Department of Mathematics, University of Manchester, Manchester M13 9PL, United Kingdom}
\email{philip.dittmann@manchester.ac.uk}
\author{Erik Walsberg}
\address{University of California, Irvine}
\email{ewalsber@uci.edu}
\urladdr{math.uci.edu/~ewalsber/}
\author{Jinhe Ye}
\address{Institut de Mathématiques de Jussieu - Paris Rive Gauche
\newline \indent {\em Current address:}\newline
\indent Mathematical Institute, University of Oxford
}
\email{jinhe.ye@imj-prg.fr, jinhe.ye@maths.ox.ac.uk}
\urladdr{sites.google.com/view/vincentye}
\thanks{JY was partially supported by GeoMod AAPG2019 (ANR-DFG), Geometric and Combinatorial Configurations in Model Theory.
  The writing up of this material profited from discussions while PD and JY participated in the programme ``Definability, Decidability and Computability in Number Theory, Part 2'' hosted by the Mathematical Sciences Research Institute in Berkeley, California, and supported by the US National Science Foundation under Grant No.\ DMS-1928930. We would like to thank Will Johnson for spotting an error in the proof of Proposition~\ref{prop:general} in a previous version.}
\DeclareFontFamily{U}{BOONDOX-calo}{\skewchar\font=45 }
\DeclareFontShape{U}{BOONDOX-calo}{m}{n}{
  <-> s*[1.05] BOONDOX-r-calo}{}
\DeclareFontShape{U}{BOONDOX-calo}{b}{n}{
  <-> s*[1.05] BOONDOX-b-calo}{}
\DeclareMathAlphabet{\mathcalboondox}{U}{BOONDOX-calo}{m}{n}
\SetMathAlphabet{\mathcalboondox}{bold}{U}{BOONDOX-calo}{b}{n}
\DeclareMathAlphabet{\mathbcalboondox}{U}{BOONDOX-calo}{b}{n}
\DeclareMathOperator*{\forkindep}{\raise0.2ex\hbox{\ooalign{\hidewidth$\vert$\hidewidth\cr\raise-0.9ex\hbox{$\smile$}}}}
\DeclareFontFamily{U}{fsy}{}
\DeclareFontShape{U}{fsy}{m}{n}{<->s*[.9]psyr}{}
\DeclareSymbolFont{der@m}{U}{fsy}{m}{n}
\DeclareMathSymbol{\der}{\mathord}{der@m}{182}
\newcommand{\longpoly}{X^{n+1} + X^n + a_{n-1} X^{n-1} + \ldots + a_1 X + a_0}
\newcommand{\longpolycoeff}{a_0,\ldots,a_{n-1} \in}
\newcommand{\seak}{\Sa E_K}
\newcommand{\Cal}[1]{\ensuremath{\mathcal{#1}}}
\newcommand{\polyn}{\mathrm{Pol}_n}
\newcommand{\mfrak}{\mathfrak{m}}
\newcommand{\pfrak}{\mathfrak{p}}
\newcommand{\Sa}[1]{\ensuremath{\mathscr{#1}}}
\newcommand{\Spec}{\operatorname{Spec}}
\newcommand{\Frac}{\operatorname{Frac}}
\newcommand{\Chara}{\operatorname{Char}}
\newcommand{\extlk}{\operatorname{Ext}_{L/K}}
\newtheorem*{claim-star}{Claim}
\newtheorem{theorem}{Theorem}[section] 
\newtheorem{lemma}[theorem]{Lemma}
\newtheorem{prop-def}[theorem]{Proposition-Definition}
\newtheorem{corollary}[theorem]{Corollary}
\newtheorem{fact}[theorem]{Fact}
\newtheorem{fact-eh}[theorem]{Fact(?)}
\newtheorem{question}[theorem]{Question}
\newtheorem{proposition}[theorem]{Proposition}
\newtheorem{proposition-eh}[theorem]{Proposition(?)}
\newtheorem*{theorem-star}{Theorem}
\newtheorem*{conjecture-star}{Conjecture}
\newtheorem*{question-star}{Question}
\newtheorem*{lemma-star}{Lemma}
\theoremstyle{definition}
\newtheorem{definition}[theorem]{Definition}
\newtheorem{remark}[theorem]{Remark}
\theoremstyle{remark}
\newcommand{\Aa}{\mathbb{A}}
\newcommand{\Gg}{\mathbb{G}}
\newcommand{\Qq}{\mathbb{Q}}
\newcommand{\Rr}{\mathbb{R}}
\newcommand{\Zz}{\mathbb{Z}}
\newcommand{\Nn}{\mathbb{N}}
\newcommand{\Cc}{\mathbb{C}}
\newcommand{\Pp}{\mathbb{P}}
\newcommand{\cE}{\mathscr{E}}
\newenvironment{claimproof}[1][\proofname]
               {
                 \proof[#1]
                 
               }
               {
                 \endproof
               }
\begin{document}

\maketitle

\begin{abstract}
  We investigate the following question: Given a field $K$, when is the \'etale open topology $\cE_K$ induced by a field topology? On the positive side, when $K$ is the fraction field of a local domain $R\neq K$, using a weak form of resolution of singularities due to Gabber, we show that $\cE_K$ agrees with the $R$-adic topology when $R$ is quasi-excellent and henselian.
  Various pathologies appear when dropping the quasi-excellence assumption.
  For locally bounded field topologies, we introduce the notion of generalized t-henselianity (gt-henselianity) following Prestel and Ziegler. We establish the following:
  For a locally bounded field topology $\uptau$, the \'etale open topology is induced by $\uptau$ if and only if $\uptau$ is gt-henselian and some non-empty \'etale image is $\uptau$-bounded open.
  On the negative side, we obtain that for a pseudo-algebraically closed field $K$, $\cE_K$ is never induced by a field topology.
\end{abstract}

\section{Introduction}
\noindent
We continue the study of the \emph{étale open topology}, initiated in \cite{firstpaper} and continued in \cite{secondpaper} and \cite{thirdpaper}.
Recall that the étale topology for a field $K$, also called $\Sa E_K$, is given by a topology on the set of rational points $V(K)$ for every $K$-variety $V$ (a \emph{system of topologies} in the terminology of \cite{firstpaper});
concretely, the $\Sa E_K$-topology on $V(K)$ is defined to have as a basis the collection of sets $f(W(K))$, where $W$ is another $K$-variety and $f \colon W \to V$ is an étale morphism.

The étale open topology is only interesting in the case of fields which are \emph{large} in the sense of Pop (see \cite{Pop-little}) but not separably closed, since otherwise it degenerates to the discrete topology or the Zariski topology, respectively.
Under this restriction, however, the abstract definition coincides with familiar topologies in many cases:
Notably, over the fields $\Cc, \Rr, \Qq_p$ we recover on each variety the Zariski topology, resp.\ real topology, resp.\ $p$-adic topology.
In particular, for $\Rr$ and $\Qq_p$, the étale open topology on every variety is induced by a Hausdorff non-discrete field topology on the ground field.

To generalize the phenomenon on $\Rr$ or $\Qq_p$, consider a local domain $R \subsetneq K$ with fraction field $K$, and recall that the $R$-adic topology on $K$ is the field topology with basis $\{ a R + b \colon a \in K^\times, b \in K \}$.
Like any other field topology, this induces a topology on $V(K)$ for any $K$-variety $V$, which we also call the $R$-adic topology.
If $R$ is a (non-trivial) valuation ring, then the $R$-adic topology is the usual valuation topology.

We now have the following facts relating $\Sa E_K$ and $R$-adic topologies.
\begin{fact}\label{fact:intro-old-R-adic}
  Let $R \subsetneq K$ be a local domain with fraction field $K$.
  \begin{enumerate}
  \item \cite[Theorem 1.2]{thirdpaper} If $R$ is henselian\footnote{We
      recall the definition of a henselian local ring below.
    For a valuation ring, this agrees with the usual notion of henselianity.}
    then the $R$-adic topology refines $\Sa E_K$.
  \item \cite[Theorem 6.15]{firstpaper} If $R$ is a valuation ring and the Henselization of $K$ with respect to the corresponding valuation is not separably closed, then $\Sa E_K$ refines the $R$-adic topology.
  \item \cite[Theorem 1.2]{thirdpaper} If $R$ is a regular (in the sense of commutative algebra) then $\Sa E_K$ refines the $R$-adic topology.
  \end{enumerate}
\end{fact}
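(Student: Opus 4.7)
The three parts are conceptually independent and I would address each separately.

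For (1), the plan is to verify the refinement locally on basic \'etale opens. Given an \'etale morphism $f \colon W \to V$ and a point $q \in W(K)$ with $p = f(q)$, I want to exhibit an $R$-adic neighborhood of $p$ in $V(K)$ contained in $f(W(K))$. By Zariski-localizing and putting $f$ into standard \'etale form, the question reduces to the following: if $g \in A[y]$ has $\partial g/\partial y$ a unit at $q$ lying above $p \in (\Spec A)(K)$, does $g$ continue to have a $K$-rational root close to $q$ as the base point is perturbed $R$-adically? This is exactly an approximate version of Hensel's lemma for a henselian local ring, applied after scaling the coefficients of $g$ into $R$. The only care needed is to work with the henselian local ring formulation rather than the more familiar henselian valuation ring one.

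For (2), the plan is to realize a basic valuation ball as an \'etale image. Let $v$ be the valuation of $R$ and $K^h$ its Henselization, which by hypothesis is not separably closed. Fix a monic separable polynomial $h \in K^h[y]$ without a root in $K^h$. The idea is to exploit this obstruction geometrically: construct a one-parameter family $P_t(y) \in K[t][y]$ such that the projection $V(P_t) \to \mathbb{A}^1$, $(t,y) \mapsto t$, is \'etale on a dense open, and whose image in $K$ is precisely a valuation ball $\{v(t) \geq N\}$ around $0$. Inside the ball, $P_t$ acquires a $K$-rational root by Hensel's lemma in $K^h$ combined with descent to $K$; outside the ball the non-root of $h$ is meant to obstruct $K$-rational points. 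The main technical difficulty is designing $P_t$ so that the \'etaleness of the family and the sharp cut-off at the boundary of the ball occur simultaneously, and then translating this local picture to arbitrary $K$-varieties via pullback.

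For (3), I would attempt to bootstrap from (2). A regular local ring $R$ is a Krull domain, so for nonzero $a \in \mathfrak{m}$ the principal fractional ideal $aR$ can be described as the intersection, over height-one primes $\mathfrak{p}$, of the fractional ideals $aR_\mathfrak{p} \cap R$, with each $R_\mathfrak{p}$ a DVR. Part (2) applied to each $R_\mathfrak{p}$ -- using that the Henselization of $K$ at $v_\mathfrak{p}$ cannot be separably closed when $K$ is not -- produces \'etale opens lying inside each valuation ball. The main obstacle is that $aR$ is a priori controlled by infinitely many height-one primes, while an \'etale image over $V(K)$ is a single basic open, so some uniformity is required. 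I expect regularity of $R$ to be the precise hypothesis that converts this infinite intersection into a finite one locally around each point, likely through factoriality of regular local rings in low dimension combined with a Noetherian compactness argument in general; this is the subtlest point of the three and where I would spend the most effort.
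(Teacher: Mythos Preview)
The paper does not prove this statement; it is quoted as background from \cite{firstpaper} and \cite{thirdpaper}. That said, the paper does display the key tool behind (3) as Fact~\ref{fact:JL} (the Jensen--Lenzing lemma) and uses it in the proof of the more general Theorem~\ref{thm:resolutionByAlterations}, so the intended argument can be read off.

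Your outlines for (1) and (2) are essentially the standard ones and are fine as plans.

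For (3), your bootstrap via the Krull decomposition $R = \bigcap_{\operatorname{ht}\mathfrak p = 1} R_{\mathfrak p}$ has a real gap, and it is not the one you identify. Factoriality does cut the number of primes dividing a fixed $a \in R$ down to finitely many, but to describe $aR$ inside $K$ you still need the condition $x \in R$, which is an infinite intersection of DVR conditions; so the reduction is circular. The argument in \cite{thirdpaper} (visible in this paper through Fact~\ref{fact:JL} and Lemma~\ref{lem:discrete-val}) proceeds by a completely different mechanism: for $\dim R \geq 2$ and residue characteristic $\neq 2$, the algebraic identity behind Fact~\ref{fact:JL} shows that any $\alpha \in K$ with $1 + \alpha^4$ a square satisfies $\alpha \in R$ or $1/\alpha \in R$. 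One then intersects the \'etale image $\{\alpha : 1 + \alpha^4 \in (K^\times)^2\}$ with the maximal ideal of a \emph{single} height-one localization $R_{\mathfrak p}$ (which is \'etale open by (2)); the second alternative $1/\alpha \in R$ is then ruled out, and the intersection lands inside $R$. The case $\dim R = 1$ is just (2), and residue characteristic $2$ is handled by the cubic variant. So the missing idea is not a finiteness or compactness argument but an explicit Diophantine constraint forcing membership in $R$.
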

\noindent
Here by the $R$-adic topology refining $\Sa E_K$ or vice versa we mean that the corresponding topologies on $V(K)$ refine each other for every variety $V/K$.
(Note, however, that this is equivalent to merely saying that the same holds only on $K^n = \mathbb{A}^n(K)$ for every $n$, see Fact \ref{fact:refine} below.)
The present paper is motivated by the following natural questions:

\begin{question}\label{question:etale-open-field-top}
~
  \begin{enumerate}
  \item When is the $\Sa E_K$-topology induced by a field topology?
  \item When does the $\Sa E_K$-topology agree with the $R$-adic topology for a local domain $R \subsetneq K$ with fraction field $K$?
  \end{enumerate}
\end{question}
\noindent
We prove that the $\Sa E_K$ is not induced by a field topology when $K$ is a pseudo-algebraically closed (PAC) field (Proposition \ref{prop:pac} below), answering a question posed in \cite[Section~8]{firstpaper}.
Since ``most'' algebraic extensions of $\Qq$ in a suitable sense are PAC, see \cite[Proposition~1]{DittmannFehm-nondefinability}, this shows that the ``generic'' answer to Question \ref{question:etale-open-field-top}(1) is negative.

In the other direction, we extend Fact \ref{fact:intro-old-R-adic} to quasi-excellent local domains, a wide class of non-pathological Noetherian domains:
\begin{theorem}[{Theorem \ref{thm:resolution-2}}]
  If $R \subsetneq K$ is a quasi-excellent local domain with fraction field $K$, then the $\Sa E_K$-topology refines the $R$-adic topology.
\end{theorem}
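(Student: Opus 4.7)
The plan is to reduce to the regular case, Fact~\ref{fact:intro-old-R-adic}(3), via a weak form of resolution of singularities for quasi-excellent schemes due to Gabber. Using Fact~\ref{fact:refine} we may restrict attention to affine lines and reformulate the goal: for every $a \in K^\times$, produce an étale morphism $f \colon W \to \Aa^1_K$ with $0 \in f(W(K)) \subseteq aR$.

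First I would apply Gabber's resolution theorem to the quasi-excellent scheme $\Spec R$, obtaining a regular integral scheme $X'$ together with a proper, surjective, generically finite morphism $\pi \colon X' \to \Spec R$. Choosing a closed point $x' \in \pi^{-1}(\mfrak_R)$ and setting $R' := \Oo_{X', x'}$ yields a regular local domain whose fraction field $L$ is a finite extension of $K$, together with a local homomorphism $R \hookrightarrow R'$. Fact~\ref{fact:intro-old-R-adic}(3) applied to $R' \subseteq L$ then says that $\Sa E_L$ refines the $R'$-adic topology on $L$, so for any prescribed $s \in L^\times$ we obtain an étale morphism $g \colon W' \to \Aa^1_L$ with $0 \in g(W'(L)) \subseteq sR'$.

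Next I would descend étale opens from $L$ back to $K$ via Weil restriction, after first arranging $L/K$ to be separable by a supplementary application of the resolution theorem if necessary. The Weil restriction $\reslk(g) \colon \reslk(W') \to \reslk(\Aa^1_L) \cong \Aa^{[L:K]}_K$ is étale, with its $K$-point image canonically identified with $g(W'(L)) \subseteq L$ through a choice of $K$-basis of $L$. Taking the fibre product with the closed immersion $\Aa^1_K \hookrightarrow \Aa^{[L:K]}_K$ that corresponds to the inclusion $K \hookrightarrow L$ then yields an étale morphism $W \to \Aa^1_K$ whose $K$-point image is $g(W'(L)) \cap K$, and in particular is contained in $sR' \cap K$.

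The principal obstacle, and the real content of the proof, is choosing $s \in L^\times$ small enough that $sR' \cap K$ falls inside the prescribed $aR$. This reduces to a cofinality statement between the filtration $(\mfrak_{R'}^n \cap R)_n$ of $R$ and the $\mfrak_R$-adic filtration on $R$. Since $\pi$ is proper, $\pi_\ast \Oo_{X'}$ is $R$-coherent, so after replacing $X'$ by a suitable affine neighbourhood of $x'$ one may view $R'$ as a localization of a module-finite $R$-algebra; then Artin--Rees delivers the required cofinality and forces $sR' \cap K \subseteq aR$ for $s$ deep enough in $\mfrak_{R'}$. The two genuine uses of the quasi-excellence hypothesis are thus in invoking Gabber's regular alteration at the start, and (indirectly) in ensuring that the alteration can be taken to be sufficiently well-behaved for this descent to succeed, exactly matching the remarks in the abstract about the role of quasi-excellence.
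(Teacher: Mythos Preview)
Your reduction strategy breaks at the cofinality step, and the Artin--Rees justification you sketch does not repair it.  The claim that ``after replacing $X'$ by a suitable affine neighbourhood of $x'$ one may view $R'$ as a localization of a module-finite $R$-algebra'' is false: coherence of $\pi_\ast\Oo_{X'}$ controls global sections of $X'$, but once you pass to an affine open you lose properness, and the section ring there is typically not finite over $R$.  Concretely, take $R$ a two-dimensional regular local ring (say with regular parameters $x,y$), let $\pi$ be the blowup of the closed point, and let $x'$ be a closed point on the exceptional divisor.  In a chart one has $R' = R[x/y]_{(y,\,x/y)}$, so $(x/y)^n \in R'$ for every $n$.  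There is then no $s \in K^\times$ with $sR' \subseteq R$, since that would force the $y$-adic valuation of $s$ to exceed every $n$.  Hence $sR'\cap K \subseteq aR$ cannot be arranged, and your descent produces an \'etale open of $K$ contained in $sR'\cap K$ but not in $aR$.  Gabber's alterations are only generically finite, so blowup-like charts are unavoidable in general and this obstruction is intrinsic to the approach.

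The paper avoids this by never trying to bound a single local ring upstairs.  It first passes to the normalization of $R$ (finite by quasi-excellence), reducing to the normal case.  Then it uses the covering form of Gabber's theorem (altered local uniformization) and, rather than bounding any $\Oo_{X_i,p}$, extracts from each $X_i$ only the finitely many codimension-one local rings at components above the closed point; these restrict to genuine DVRs $\Oo_{i,p}$ of $K$.  The \'etale open is then built from the intersection of these DVR maximal ideals together with the set where $1+X^4$ (or $1+X^3$) is a square (or cube), and a valuation-theoretic argument using normality of $R$ (Lemma~\ref{lem:ringIntersectionOfNiceValnRings}) and Fact~\ref{fact:JL} forces this set into $R$.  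The role of the alteration is thus to supply enough DVRs to detect membership in $R$, not to furnish a single bounded regular overring.
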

\noindent
Together with Fact \ref{fact:intro-old-R-adic}(1), we deduce:
\begin{corollary}\label{cor:quasiexcellent-henselian}
  If $R \subsetneq K$ is quasi-excellent henselian local domain (e.g.\ $R$ a complete Noetherian local domain) with fraction field $K$, then the $R$-adic topology coincides with the $\Sa E_K$-topology.
\end{corollary}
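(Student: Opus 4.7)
The plan is to combine the two refinement results that bookend this statement. First, I would apply Theorem~\ref{thm:resolution-2} to obtain that $\Sa E_K$ refines the $R$-adic topology; this is precisely the direction that uses the quasi-excellence hypothesis (and, behind the scenes, Gabber's weak resolution of singularities, per the abstract). Second, I would invoke Fact~\ref{fact:intro-old-R-adic}(1), which in turn supplies the reverse refinement from the henselianity hypothesis on $R$. Since a system of topologies is uniquely determined by refinement in both directions, the two systems of topologies agree on $V(K)$ for every $K$-variety $V$, which is the content of the corollary.

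To justify the parenthetical ``e.g.\ $R$ a complete Noetherian local domain'', I would recall two standard facts. Henselianity of a complete Noetherian local ring is the classical form of Hensel's lemma, which applies to the unique maximal ideal of $R$. Quasi-excellence is more substantial but is part of Grothendieck's EGA~IV (Scholie~7.8.3), where it is shown that every complete Noetherian local ring is excellent, in particular quasi-excellent. Consequently both hypotheses of the corollary are automatic in the complete case, and the main statement applies without further work.

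I do not expect any real obstacle: once Theorem~\ref{thm:resolution-2} is in hand, the corollary is a formal two-line consequence, and all the genuine mathematical content sits in Theorem~\ref{thm:resolution-2} itself (together with Fact~\ref{fact:intro-old-R-adic}(1), which has already been proved in \cite{thirdpaper}). The only care required is to ensure that the refinement statements are being invoked in the system-of-topologies sense on every $V(K)$, but this is exactly how both results are formulated in the excerpt.
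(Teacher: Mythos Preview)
Your proposal is correct and matches the paper's approach exactly: the corollary is stated immediately after the sentence ``Together with Fact~\ref{fact:intro-old-R-adic}(1), we deduce:'', so the paper's proof is precisely the combination of Theorem~\ref{thm:resolution-2} with Fact~\ref{fact:intro-old-R-adic}(1) that you describe. Your justification of the parenthetical is also in line with the paper, which records in Fact~\ref{fact:ring-properties}(3) (via Fact~\ref{fact:good-char}) that complete Noetherian local rings are excellent.
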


\noindent
In the case of a $1$-dimensional Noetherian henselian local domain $R$, we can even characterize precisely when the $R$-adic topology coincides with the étale open topology on the fraction field, see Corollary~\ref{cor:japan}.

In Sections \ref{section:refinement} and \ref{section:example} we give examples of pathologies that can arise when the quasi-excellence assumption is dropped, exhibiting at the same time interesting behaviour of the étale open topology under finite field extensions.

Finally, to study Question \ref{question:etale-open-field-top} in much greater generality, we borrow the model-theoretic tools of \cite{Prestel1978}.
This allows to obtain comprehensive answers at least up to replacing the field $K$ by a suitable elementary extension.

In this vein, it had previously been shown \cite[Theorem B]{firstpaper} that the $\Sa E_K$-topology for $K$ not separably closed is induced by a so-called V-topology on $K$ if and only if $K$ is a so-called t-henselian field, i.e.\ if and only if some elementary extension $K^\ast \succ K$ carries a henselian valuation.

We study a notion of \emph{gt-henselian} field topologies, a natural generalization of the notion of a t-henselian field topology from \cite{Prestel1978}.
In fact, this notion agrees with a different notion of henselianity for rings suggested (but hardly studied) in the literature, see Remark \ref{rem:gt-henselian-literature}.
When $\Sa E_K$ is induced by a field topology, that topology must necessarily be gt-henselian (Lemma~\ref{lem:azu-refine}).
We then obtain the following answer to Question \ref{question:etale-open-field-top} with the restriction to locally bounded field topologies:
\begin{theorem}[{Proposition \ref{prop:azumayan}}]
  Suppose that $\uptau$ is a locally bounded field topology on $K$.
Then $\uptau$ induces the $\seak$-topology if and only if $\uptau$ is gt-henselian and some nonempty \'etale image in $K$ is $\uptau$-bounded.
\end{theorem}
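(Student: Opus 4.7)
I will treat the two implications separately.

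For $(\Rightarrow)$, Lemma~\ref{lem:azu-refine} already delivers that if $\uptau$ induces $\seak$, then $\uptau$ is gt-henselian. For the boundedness condition, I use local boundedness of $\uptau$ to pick some nonempty $\uptau$-bounded $\uptau$-open subset $B \subseteq K$; since $\uptau = \seak$, the set $B$ is $\seak$-open, so it contains a nonempty basic $\seak$-open set, i.e.\ a nonempty étale image $U \subseteq B$, which is then $\uptau$-bounded as a subset of $B$.

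For $(\Leftarrow)$, I assume $\uptau$ is gt-henselian and fix a nonempty $\uptau$-bounded étale image $U \subseteq K$, and show that $\uptau$ and $\seak$ refine each other. To see that $\uptau$ refines $\seak$, I will invoke the expected content of gt-henselianity---namely, that every étale image in $K$ is $\uptau$-open, which is the natural generalization of what the Implicit Function Theorem supplies in the V-topological case and should be the reverse content of Lemma~\ref{lem:azu-refine}. Since étale images form a basis for $\seak$ on $\Aa^1$, this refinement propagates to every $K$-variety via Fact~\ref{fact:refine}.

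For the other refinement, I pick $a \in U$; then $U - a$ is a $\uptau$-open (by the previous step) $\uptau$-bounded neighborhood of $0$. Boundedness together with the fact that $\uptau$ is a field topology guarantees that the dilations $\lambda(U - a)$ with $\lambda \in K^\times$ form a $\uptau$-neighborhood basis of $0$. Each such dilation is again an étale image: if $f\colon W \to \Aa^1_K$ is étale with image $U$, then composing $f$ with the étale automorphism $x \mapsto \lambda(x - a)$ of $\Aa^1_K$ exhibits $\lambda(U - a)$ as an étale image. Translating by elements of $K$ yields $\seak$-neighborhood bases at arbitrary points refining $\uptau$. The main obstacle I anticipate is the first refinement, which hinges on verifying that gt-henselianity, in the precise form defined, does force étale images into $\Aa^1$ to be $\uptau$-open; this should either be essentially built into the definition or provable by a direct lift of the classical t-henselian argument.
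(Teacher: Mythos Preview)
Your proposal is correct and follows essentially the same route as the paper: Lemma~\ref{lem:azu-refine} for the forward direction and for the refinement $\uptau \supseteq \seak$ in the backward direction, then the observation that the affine images $\alpha U + \beta$ of a bounded \'etale image $U$ form both a $\uptau$-basis (by boundedness and openness) and consist of $\seak$-open sets (by Fact~\ref{fact:basic}(3)). Your anticipated obstacle is not one: the implication ``$\uptau$ gt-henselian $\Rightarrow$ \'etale images are $\uptau$-open'' is exactly the content of Proposition~\ref{prop:general}(\ref{item:4})$\Leftrightarrow$(\ref{item:1}), which is what Lemma~\ref{lem:azu-refine} invokes.
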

\begin{theorem}\label{thm:intro-loc-bdd-henselian-R-adic}
  The $\Sa E_K$-topology is induced by a locally bounded field topology if and only if there exists an elementary extension $K^\ast \succ K$ and a henselian local domain $R \subsetneq K^\ast$ with fraction field $K^\ast$ such that the $R$-adic topology induces $\Sa E_{K^\ast}$.
\end{theorem}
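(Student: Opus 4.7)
The strategy is to apply Proposition~\ref{prop:azumayan} in both directions, bridging the gt-henselian characterization of locally bounded field topologies inducing $\Sa E_K$ with the existence of henselian local domains in saturated elementary extensions.

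For the $(\Leftarrow)$ direction, suppose $K^\ast \succ K$ and $R \subsetneq K^\ast$ is henselian local with fraction field $K^\ast$ and $R$-adic topology inducing $\Sa E_{K^\ast}$. The $R$-adic topology is locally bounded, with $R$ itself a bounded neighborhood of $0$; hence $\Sa E_{K^\ast}$ is induced by a locally bounded field topology. By Proposition~\ref{prop:azumayan}, this is equivalent to the existence of a parameter $b \in K^\ast$ defining, via some formula scheme, a neighborhood basis whose associated topology is a gt-henselian field topology carrying a bounded nonempty \'etale image. Each ingredient---field topology axioms, gt-henselianity as a Prestel--Ziegler-style scheme of root-lifting axioms, local boundedness, and existence of a bounded \'etale image---is first-order in $b$. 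So the existence of such $b$ is a scheme of first-order existential sentences, and by elementarity $K \prec K^\ast$ the same scheme is satisfied over $K$. Another application of Proposition~\ref{prop:azumayan} produces a locally bounded field topology on $K$ inducing $\Sa E_K$.

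For the $(\Rightarrow)$ direction, suppose $\uptau$ is a locally bounded field topology on $K$ inducing $\Sa E_K$. By Proposition~\ref{prop:azumayan}, $\uptau$ is gt-henselian and some $\uptau$-bounded set is a nonempty \'etale image. Let $K^\ast \succ K$ be an $|K|^+$-saturated elementary extension, and let $\uptau^\ast$ denote the field topology on $K^\ast$ obtained by extending the defining formulas of $\uptau$'s neighborhood basis. I want to construct a henselian local domain $R \subsetneq K^\ast$ with fraction field $K^\ast$ whose $R$-adic topology coincides with $\uptau^\ast$. By saturation, the intersection of the chain of $\uptau^\ast$-neighborhoods of $0$ in $K^\ast$ remains nontrivial, and its stabilizer-type constructs produce a candidate pair $(\mathfrak{m}, R)$ with $\mathfrak{m}$ the would-be maximal ideal and $R := \{x \in K^\ast : x\mathfrak{m} \subseteq \mathfrak{m}\}$. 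Local boundedness of $\uptau^\ast$ ensures $\Frac R = K^\ast$ and that $\mathfrak{m}, R$ generate the topology $\uptau^\ast$ $R$-adically; gt-henselianity, now reflecting genuine henselian root-lifting in the saturated extension, yields that $R$ is henselian.

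The main obstacle is carrying out this construction of $R$ in the $(\Rightarrow)$ direction: turning the abstract first-order gt-henselian neighborhood structure into an honest henselian local subring. This parallels the Prestel--Ziegler argument producing a henselian valuation from a t-henselian V-topology, but here valuation rings are replaced by general local domains, which are more flexible and require a more delicate identification of the correct subring and maximal ideal from the neighborhood basis of $\uptau^\ast$. A secondary technical point is the careful verification that ``existence of a locally bounded field topology inducing $\Sa E$'' can indeed be encoded as a first-order scheme in a parameter $b$ so as to be preserved under elementary substructures, which underpins the $(\Leftarrow)$ descent.
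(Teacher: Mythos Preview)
Your overall strategy matches the paper: both directions ultimately rest on Proposition~\ref{prop:azumayan}, Lemma~\ref{lem:elem-equiv-EK-loc-bdd} (transfer of ``$\Sa E$ is induced by a locally bounded field topology'' along elementary equivalence), and the Prestel--Ziegler-style passage from a gt-henselian $\omega$-complete topology to a henselian local ring (Proposition~\ref{prop:t-hensel}). Your $(\Leftarrow)$ direction is essentially a re-derivation of Lemma~\ref{lem:elem-equiv-EK-loc-bdd}; the paper does this with the parameter-free definable families $\mathcal U_d,\mathcal V_d$ of standard-\'etale images rather than a single parameter $b$, but your encoding is also viable.

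The genuine gap is in $(\Rightarrow)$. Your proposed construction takes $\mathfrak m$ to be ``the intersection of the chain of $\uptau^\ast$-neighbourhoods of $0$'' and $R=\{x:x\mathfrak m\subseteq\mathfrak m\}$. As stated this cannot work: $\uptau^\ast$ is Hausdorff, so the intersection of all neighbourhoods of $0$ is $\{0\}$, and saturation does not change this. If instead you mean to intersect only the neighbourhoods coming from a basis over $K$, saturation does give a nontrivial set, but there is no reason for it to be closed under addition or multiplication, nor for the multiplier ring $R$ to be local with maximal ideal $\mathfrak m$, nor for the resulting $R$-adic topology to coincide with $\uptau^\ast$. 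The paper's route (Proposition~\ref{prop:t-hensel}, following \cite{Prestel1978}) avoids all of this: one first observes that $\aleph_1$-saturation makes $\uptau^\ast$ $\omega$-complete (the basis is a definable family, so countable intersections of basis sets contain a basis set), and then \emph{iteratively} builds a decreasing sequence $P_0\supseteq P_1\supseteq\cdots$ of neighbourhoods of $0$, at each step forcing the closure conditions $P_i+P_i,\,P_i\cdot P_i,\,r P_i,\,(1+P_i)^{-1}\subseteq 1+P_{i-1}$ and the gt-henselian root-lifting condition; the intersection $P=\bigcap P_i$ is then the maximal ideal of the henselian local ring $K_{\mathrm{pr}}+P$. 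This inductive shrinking, not a one-shot intersection plus stabilizer, is what makes the ring-theoretic properties go through. You correctly identify that this step parallels Prestel--Ziegler, but the construction you sketch is not the one that works.
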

\noindent
It remains open whether the $\Sa E_K$-topology can ever be induced by a field topology which is not locally bounded.

\section{Conventions and background}
\label{section:conventions and background}
\noindent
Throughout, $K$ is a field and $\Chara(K)$ its characteristic.

\subsection{Scheme theory}
A $K$-variety is a separated $K$-scheme of finite type, not necessarily irreducible or reduced.
(This is the convention of \cite[Definition 2.1.1]{poonen-qpoints}.)
Throughout $\Aa^n$ is $n$-dimensional affine space over $K$, i.e. $\Aa^n = \Spec K[X_1,\ldots,X_n]$.
We let $V(K)$ be the set of $K$-points of a $K$-variety $V$.
Given a scheme $W$ we let $\mathcal{O}_W$ be the structure sheaf of $W$, $\mathcal{O}_{W,p}$ be the local ring of $W$ at $p \in W$, and let $\mathcal{O}_p = \mathcal{O}_{W,p}$ when $W$ is clear.

\subsection{The \'etale open topology}
We gather basic facts on the $\seak$-topology from~\cite{firstpaper}.
For a $K$-variety $W$, the \emph{étale open topology} on the set of rational points $W(K)$ is the topology with basis the collection of \emph{étale images} $U \subseteq W(K)$, i.e.\ the sets $U = f(V(K))$ where $f \colon V \to W$ is an étale morphism.
We also write $\Sa E_K$ for the étale open topology on $W(K)$ for any $W$, with $W$ generally clear from context.

\begin{fact}
\label{fact:basic}
Suppose that $V \to W$ is a morphism of $K$-varieties.
Then:
\begin{enumerate}
\item the induced map $V(K) \to W(K)$ is $\Sa E_K$-continuous.
\item if $V \to W$ is \'etale then the induced map $V(K) \to W(K)$ is $\Sa E_K$-open.
\item the map $K \to K$, $x \mapsto \alpha x + \beta$ is an $\Sa E_K$-homeomorphism for any $\alpha \in K^\times, \beta \in K$.
\item if $n$ is prime to $\Chara(K)$ then $\{ \alpha^n : \alpha \in K^\times \}$ is an \'etale open subset of $K$.
\end{enumerate}
\end{fact}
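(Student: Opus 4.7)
The plan is to treat the four items in order, each reducing to a standard manipulation of étale morphisms (base change, composition, open immersions).

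For (1), let $f \colon V \to W$ be the given morphism and let $U = g(V'(K)) \subseteq W(K)$ be a basic étale image, with $g \colon V' \to W$ étale. I would consider the base change $V \times_W V' \to V$, which is étale since étaleness is preserved under base change. Its image on $K$-points is exactly $f^{-1}(U)$ by the universal property of the fibre product (a $K$-point of $V \times_W V'$ is a matched pair of $K$-points of $V$ and $V'$ with the same image in $W(K)$). Thus $f^{-1}(U)$ is itself a basic étale image in $V(K)$, giving continuity of $f$.

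For (2), if $f \colon V \to W$ is étale and $g(V'(K)) \subseteq V(K)$ is a basic étale image, then its forward image under $f$ is $(f \circ g)(V'(K))$, and $f \circ g$ is étale as the composition of étale morphisms. So étale images are sent to étale images. For (3), the map $\sigma_{\alpha,\beta}(x) = \alpha x + \beta$ extends to a $K$-automorphism of $\Aa^1$ whose inverse is $\sigma_{\alpha^{-1},-\alpha^{-1}\beta}$; automorphisms are étale, so applying (2) to both $\sigma_{\alpha,\beta}$ and its inverse shows that $\sigma_{\alpha,\beta}$ is an $\seak$-homeomorphism of $K = \Aa^1(K)$.

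For (4), I consider the $n$-th power morphism $p_n \colon \Gg_m \to \Gg_m$, $x \mapsto x^n$. Its derivative $nX^{n-1}$ is a unit on $\Gg_m$ precisely because $n$ is coprime to $\Chara(K)$, so $p_n$ is étale. To conclude that the image is étale open in $K$ (not merely in $K^\times$), I would compose $p_n$ with the open immersion $\Gg_m \hookrightarrow \Aa^1$, which is itself étale, yielding an étale morphism $\Gg_m \to \Aa^1$ whose image on $K$-points is exactly $\{\alpha^n : \alpha \in K^\times\}$.

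No deep obstacle appears; the entire proof is an exercise in pushing étale morphisms through fibre products and compositions. The only mild subtlety is the last step of (4), where one must remember to widen the target from $\Gg_m$ to $\Aa^1$ via an étale open immersion so that the conclusion is indeed about $\seak$-openness inside $K$.
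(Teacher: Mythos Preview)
Your argument is correct and matches the paper's approach; the paper simply cites (1) and (2) from \cite{firstpaper} rather than spelling out the base-change and composition arguments you give, and it derives (3) from (1) (applying continuity to the affine map and its inverse) rather than from (2), but this is an inessential difference.
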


\begin{proof}
(1), (2) is \cite[Lemma 5.2, 5.3]{firstpaper}, respectively.
(3) follows from (1).
Let $\Gg_m = \Spec K[X,X^{-1}]$ be the scheme-theoretic multiplicative group over $K$.
Then (4) follows from (2) as the morphism $\Gg_m \to \Gg_m$, $X \mapsto X^n$ is \'etale when $n$ is prime to $\Chara(K)$.
\end{proof}

\noindent
Suppose that $L$ is an extension of $K$ and $V$ is a $K$-variety.
We let $V_L = V \times_{\Spec K} \Spec L$ be the base change of $V$.
Recall that $V_L(L)$ is canonically identified with $V(L)$, so we canonically equip $V(L)$ with the $\Sa E_L$-topology.
Fact~\ref{fact:restrict} below is \cite[Theorem~5.8]{firstpaper}.

\begin{fact}
\label{fact:restrict}
Suppose that $L$ is an algebraic extension of $K$ and $V$ is a $K$-variety.
Then the $\Sa E_K$-topology on $V(K)$ refines the topology induced on $V(K)$ by the $\Sa E_L$-topology on $V(L) = V_L(L)$, i.e. if $O \subseteq V(L)$ is $\Sa E_L$-open then $O \cap V(K)$ is $\Sa E_K$-open.
\end{fact}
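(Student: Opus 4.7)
The plan is to reduce to a finite subextension and then handle the separable and purely inseparable cases separately. It suffices to treat a basic \'etale open set, so let $f\colon W \to V_L$ be an \'etale morphism of $L$-schemes of finite type and fix $p \in f(W(L)) \cap V(K)$. Pick $q \in W(L)$ with $f(q) = p$. Since $W$, $V_L$, $f$, and $q$ are described by finitely many polynomial equations and elements of $L$, a standard spreading-out argument produces a finite subextension $K \subseteq L_0 \subseteq L$, an \'etale $L_0$-morphism $f_0\colon W_0 \to V_{L_0}$, and $q_0 \in W_0(L_0)$ whose base change along $L_0 \hookrightarrow L$ recovers $(f,q)$, and in particular $f_0(q_0) = p$. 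So it suffices to prove the result for finite $L/K$, which we further factor as $L/L^{\mathrm{s}}/K$ with $L^{\mathrm{s}}/K$ separable and $L/L^{\mathrm{s}}$ purely inseparable.

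For $L/K$ finite separable, the main tool is Weil restriction of scalars $\reslk$, which sends \'etale $L$-morphisms to \'etale $K$-morphisms and satisfies $\reslk(X)(K) = X(L)$ functorially. The canonical $K$-morphism $\iota\colon V \to \reslk(V_L)$ (corresponding under the adjunction to $\mathrm{id}_{V_L}$) recovers the inclusion $V(K) \hookrightarrow V(L)$ on $K$-points, so the fiber product
\[ U := V \times_{\reslk(V_L)} \reslk(W) \]
is \'etale over $V$ with $U(K) = V(K) \times_{V(L)} W(L)$, and the image of $U(K) \to V(K)$ is precisely $f(W(L)) \cap V(K)$. By Fact~\ref{fact:basic}(2) this image is $\Sa E_K$-open in $V(K)$.

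For $L/K$ finite purely inseparable (so $\Chara(K) > 0$), I would invoke topological invariance of the \'etale site: base change is an equivalence between \'etale $V$-schemes and \'etale $V_L$-schemes. Hence $f\colon W \to V_L$ descends to an \'etale $K$-morphism $f_0\colon W_0 \to V$ with $(W_0)_L \cong W$ as $V_L$-schemes. It remains to show $f_0(W_0(K)) = f(W(L)) \cap V(K)$. The inclusion ``$\subseteq$'' is immediate; for ``$\supseteq$'', given $p \in V(K)$ and $q \in W(L)$ with $f(q) = p$, the point $q$ corresponds to an $L$-point of the fiber $W_0 \times_V \Spec K$, which is \'etale over $\Spec K$ and therefore a disjoint union of spectra of finite \emph{separable} extensions of $K$. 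Since $L/K$ is purely inseparable, any such $L$-point necessarily factors through $\Spec K$, producing the required $q_0 \in W_0(K)$. Again Fact~\ref{fact:basic}(2) finishes the argument.

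I expect the main obstacle to be administrative rather than conceptual: one must verify that Weil restriction exists in sufficient generality for the varieties at hand (one may have to cover $V$ by affines, since $V_L$ need not be quasi-projective), that it preserves \'etaleness, and that all the fiber-product identifications on $K$-points compute as claimed. The geometric content is light; it is the careful handling of the étale-site equivalence under purely inseparable base change and the functoriality of $\reslk$ that do the real work.
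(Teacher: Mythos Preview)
The paper does not prove this statement here; it is simply quoted as \cite[Theorem~5.8]{firstpaper}. Your argument is correct and is essentially the standard route: reduction to finite extensions by spreading out, Weil restriction for the separable part, and topological invariance of the \'etale site for the purely inseparable part. The fiber-product computation $U(K)=V(K)\times_{V(L)}W(L)$ and the descent of $L$-points to $K$-points in the purely inseparable case are both handled correctly; the only genuine care needed, as you note, is the existence of $\reslk$, which is unproblematic after passing to affine or quasi-projective opens.
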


\subsection{Ring topologies and field topologies}

Our general reference for ring topologies and field topologies is \cite{Prestel1978}, and we follow its conventions.
In particular, \textbf{ring topologies are always taken to be Hausdorff and not discrete}.

We have the following basic fact about comparisons between the étale open topology and a field topology, proven in \cite[Lemma~4.8, Lemma~4.2]{firstpaper}.

\begin{fact}
\label{fact:refine}
Suppose that $\uptau$ is a field topology on $K$.
If the $\uptau$-topology on each $K^n = \Aa^n(K)$ refines the $\Sa E_K$-topology, then the $\uptau$-topology on $V(K)$ refines the $\Sa E_K$-topology for any $K$-variety $V$.
If the $\Sa E_K$-topology on $K$ refines $\uptau$, then the $\Sa E_K$-topology on $V(K)$ refines the $\uptau$-topology for any $K$-variety $V$.
\end{fact}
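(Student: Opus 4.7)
The plan is to reduce both claims to statements about $\Aa^n(K)$, exploiting that $V(K)$ is covered by the $K$-points of affine opens $V_i \hookrightarrow \Aa^{n_i}$; each $V_i(K)$ is both $\uptau$-open in $V(K)$ (since polynomials are $\uptau$-continuous and $K^\times$ is $\uptau$-open) and $\Sa E_K$-open in $V(K)$ (since $V_i \hookrightarrow V$ is étale). Moreover, the $\uptau$-topology on $V_i(K)$ is by definition the subspace topology from $K^{n_i}$, and the inclusion $V_i(K) \hookrightarrow K^{n_i}$ is $\Sa E_K$-continuous by Fact~\ref{fact:basic}(1). Together these let us localize the questions of refinement to affine $V$, sitting closedly in some $\Aa^n$.

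For the second claim it then suffices to show that every $\uptau$-open subset of $K^n$ is $\Sa E_K$-open. Since $\uptau$ is a field topology, the $\uptau$-topology on $K^n$ is the product topology, with a basis of boxes $U_1 \times \cdots \times U_n$ where each $U_i \subseteq K$ is $\uptau$-open and hence, by hypothesis, $\Sa E_K$-open. By Fact~\ref{fact:basic}(1) the coordinate projections $\Aa^n \to \Aa^1$ are $\Sa E_K$-continuous, and so $U_1 \times \cdots \times U_n = \bigcap_i \pi_i^{-1}(U_i)$ is $\Sa E_K$-open. A $\uptau$-open subset of the ambient $K^{n_i}$ thus restricts to an $\Sa E_K$-open subset of $V_i(K)$, yielding the claim.

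For the first claim, a basic $\Sa E_K$-open in $V(K)$ is an étale image $f(W(K))$ for an étale morphism $f \colon W \to V$. Covering $W$ by affine opens and applying the local structure theorem for étale morphisms, it suffices to handle the standard étale case $W = \Spec (A[T]/g(T))_{h(T)} \to \Spec A = V$, with $g$ monic in $T$ and $g'$ invertible on $W$. Fix a closed embedding $V \hookrightarrow \Aa^n$ and lift the coefficients of $g$ and $h$ to polynomials $\tilde g, \tilde h \in K[X_1, \ldots, X_n][T]$, keeping $\tilde g$ monic in $T$. Then $\tilde W := \Spec (K[X_1, \ldots, X_n, T] / \tilde g)_{\tilde h \tilde g'}$ is standard étale over $\Aa^n$ via the projection $\pi$ to the $X$-coordinates, and a direct check gives $f(W(K)) = V(K) \cap \pi(\tilde W(K))$. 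The hypothesis supplies that $\pi(\tilde W(K))$ is $\uptau$-open in $K^n$, and hence $f(W(K))$ is $\uptau$-open in $V(K)$.

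The main obstacle lies in the first claim: one must combine the local structure theorem for étale morphisms with a careful lifting of its data to $\Aa^n$, making sure that the lifted $\tilde g$ remains monic and that $\tilde W \to \Aa^n$ is genuinely standard étale (which is enforced by localizing additionally at $\tilde g'$). Once this bookkeeping is arranged, both directions follow without further input, and the second claim is essentially formal.
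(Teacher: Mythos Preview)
The paper does not give a proof of this fact; it is simply recorded as a consequence of \cite[Lemma~4.8, Lemma~4.2]{firstpaper}. Your self-contained argument is correct and is presumably close to what is done in that reference. The reduction to affine $V$ is set up properly: you need (and have) that each $V_i(K)$ is open in $V(K)$ for \emph{both} topologies, and that both the $\uptau$- and $\Sa E_K$-topologies on $V_i(K)$ agree with the respective subspace topologies from $V(K)$, so that refinement on $V(K)$ is equivalent to refinement on each $V_i(K)$.

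For the first claim, the lifting trick is exactly right. One small point worth making explicit: your extra localization at $\tilde g'$ not only forces $\tilde W \to \Aa^n$ to be standard étale, it also does no harm on $K$-points over $V$, precisely because $g'$ is already invertible on $W$; this is why the equality $f(W(K)) = V(K)\cap \pi(\tilde W(K))$ holds on the nose. (In fact your construction even gives $\tilde W \times_{\Aa^n} V \cong W$ as schemes, though you only need the $K$-point statement.) For the second claim, your argument via coordinate projections is the standard one and is indeed formal.
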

\noindent
Let $R$ be a domain with fraction field $K = \Frac(R)$, and assume $R \neq K$.
The \emph{$R$-adic topology} on $K$ is the topology with basis $\{ a R + b \colon a \in K^\times, b \in K \}$.
This is a ring topology.
(Compare \cite[Example 1.2]{Prestel1978}, although the name $R$-adic topology is not used there.)
We are chiefly but not exclusively interested in the situation where $R$ is local.

We let $J(R)$ be the Jacobson radical of $R$.
It is the intersection of all maximal ideals of $R$, or equivalently $J(R) = \{ x \in R \colon 1 + xR \subseteq R^\times \}$.

\begin{fact}
\label{fact:jacobson-field}
The $R$-adic topology on $K$ is a field topology if and only if $J(R)\ne\{0\}$.
\end{fact}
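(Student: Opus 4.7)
The plan is to establish each direction of the biconditional separately by direct analysis of continuity of inversion, the only ingredient of a field topology not already present in a ring topology.

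For the $(\Leftarrow)$ direction, assume $J(R) \neq \{0\}$ and fix $c_0 \in J(R) \setminus \{0\}$. Since the $R$-adic topology is already a ring topology, it remains to verify continuity of inversion on $K^\times$, and it suffices to check this at $1$ (continuity elsewhere follows via the homothety $x \mapsto \alpha x$, which is a homeomorphism for any ring topology). Given a basic neighborhood $1 + aR$ of $1$ with $a \in K^\times$, write $a = a_1/a_2$ with $a_1, a_2 \in R \setminus \{0\}$ and propose the smaller neighborhood $V = 1 + a_1 c_0 R$. The two facts to check are: $V \subseteq K^\times$, which follows from $a_1 c_0 R \subseteq J(R)$ combined with $1 + J(R) \subseteq R^\times$; and $V^{-1} \subseteq 1 + aR$, via the computation $v^{-1} - 1 = -a_1 c_0 r / v = (a_1/a_2)(-a_2 c_0 r / v)$ for $v = 1 + a_1 c_0 r \in V$, where $v^{-1} \in R$ ensures the second factor lies in $R$.

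For the $(\Rightarrow)$ direction, assume the $R$-adic topology is a field topology; continuity of inversion at $1$ yields $c \in K^\times$ with $1 + cR \subseteq K^\times$ and $(1 + cR)^{-1} \subseteq 1 + R$. Rearranging $(1+cr)^{-1} - 1 = -cr/(1+cr) \in R$ gives $(1 + cr)^{-1} \in R$ for every $r \in R$. The decisive step is to write $c = u/v$ with $u, v \in R \setminus \{0\}$ and specialize to $r = vs$ for $s \in R$, so that $cr = us \in R$; then $1 + us$ and its inverse in $K$ both lie in $R$, forcing $1 + us \in R^\times$. This yields $1 + uR \subseteq R^\times$, i.e., $u \in J(R) \setminus \{0\}$.

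The main obstacle is the $(\Rightarrow)$ direction: the element $c$ produced by continuity of inversion is \emph{a priori} only in $K^\times$, not in $R$, so one cannot directly conclude $c \in J(R)$. The clearing-of-denominators manoeuvre, restricting $r$ to multiples of $v$, is what reduces the analysis to the subring $R$ and extracts a nonzero element of the Jacobson radical as the numerator $u$.
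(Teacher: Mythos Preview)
Your proof is correct and follows essentially the same approach as the paper. The paper cites an external reference for the $(\Leftarrow)$ direction whereas you give a direct argument, and for $(\Rightarrow)$ the paper simply asserts ``there is nonzero $\alpha \in R$ such that $(1+\alpha R)^{-1}\subseteq R$'' while you make the passage from $c\in K^\times$ to an element of $R$ explicit via the denominator-clearing step---but the underlying idea is identical.
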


\begin{proof}
The right to left implication is~\cite[Proposition 3.1]{dp-finite-v}.
We prove the left to right implication.
Suppose that the $R$-adic topology is a field topology.
Then inversion gives a continuous map $K^\times \to K^\times$.
Hence there is nonzero $\alpha \in R$ such that $(1 + \alpha R)^{-1} \subseteq R$. Thus $(1+\alpha R)\subseteq R^\times$ and $\alpha \in J(R)$.
\end{proof}

Fact~\ref{fact:refine-1} follows from Fact~\ref{fact:refine}, \ref{fact:jacobson-field} and the definitions.
\noindent
We leave the details to the reader.
\begin{fact}
\label{fact:refine-1}
Suppose that $R$ has nonzero Jacobson radical (so the $R$-adic topology is a field topology.)
The following are equivalent:\begin{enumerate}
\item The $\Sa E_K$-topology on $V(K)$ refines the $R$-adic topology for any $K$-variety $V$.
\item $R$ contains a nonempty $\Sa E_K$-open subset of $K$.
\end{enumerate}
\end{fact}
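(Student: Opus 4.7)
The plan is to verify both implications directly from the definitions, using Fact~\ref{fact:refine} to reduce the refinement question to the situation on $K = \Aa^1(K)$.

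For the implication (1) $\Rightarrow$ (2), I would simply observe that $R = 1 \cdot R + 0$ is itself one of the basic open sets of the $R$-adic topology on $K$. If (1) holds, specialized to $V = \Aa^1$ it says every $R$-adic open subset of $K$ is $\Sa E_K$-open, so $R$ itself is a nonempty $\Sa E_K$-open subset of $K$, which is trivially contained in $R$.

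For the converse (2) $\Rightarrow$ (1), I first invoke the second clause of Fact~\ref{fact:refine} to reduce the claim to showing that the $\Sa E_K$-topology on $K$ alone refines the $R$-adic topology. Starting from a nonempty $\Sa E_K$-open set $U \subseteq R$, I pick a point $u \in U$ and use the translation-invariance from Fact~\ref{fact:basic}(3) to see that $U - u$ is an $\Sa E_K$-open neighborhood of $0$; it remains contained in $R$ because $u \in R$ and $R$ is closed under subtraction. To refine an arbitrary basic $R$-adic neighborhood of a point $z \in K$ — which, containing $z$, can be rewritten in the form $aR + z$ with $a \in K^\times$ — I apply the affine $\Sa E_K$-homeomorphism $w \mapsto aw + z$ (again Fact~\ref{fact:basic}(3)) to $U - u$, obtaining an $\Sa E_K$-open neighborhood of $z$ contained in $aR + z$.

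Since both directions are short, there is no substantial obstacle; the proof amounts to noting that the translation- and scaling-invariance of the $R$-adic basic neighborhoods matches the invariance of $\Sa E_K$ under affine maps, and that the hypothesis $J(R) \neq 0$ is used only implicitly via Fact~\ref{fact:jacobson-field} to ensure we are comparing genuine field topologies.
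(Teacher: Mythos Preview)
Your proof is correct and matches the paper's approach: the paper simply states that Fact~\ref{fact:refine-1} follows from Fact~\ref{fact:refine}, Fact~\ref{fact:jacobson-field}, and the definitions, leaving the details to the reader, and you have supplied exactly those details.
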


\noindent
Given a ring topology $\uptau$ on $K$, a set $B \subseteq K$ is called \emph{bounded} if for every neighbourhood $U$ of $0$ there exists a neighbourhood $V$ of $0$ such that $V \cdot B \subseteq U$.
The topology $\uptau$ is \emph{locally bounded} if there exists a bounded neighbourhood of $0$.
\begin{fact}
\label{fact:open subring}
Let $\uptau$ be a ring topology on $K$ and $S$ an open subring of $K$.
Then $K = \Frac(S)$.
\end{fact}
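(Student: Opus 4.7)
The plan is to exploit the continuity of multiplication together with the non-discreteness of the topology (which is part of our standing convention on ring topologies) to write an arbitrary $x \in K$ as a quotient of two elements of $S$.

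First I would observe that $S$, being an open subring of $K$ that contains $0$, is a neighbourhood of $0$ in $\uptau$; moreover $S$ contains $1$ and so is a nontrivial domain sitting inside $K$, making $\Frac(S)$ a well-defined subfield of $K$.

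Next, to show $K \subseteq \Frac(S)$, fix $x \in K$. Since $\uptau$ is a ring topology, the map $y \mapsto xy$ from $K$ to $K$ is continuous at $0$, so there exists a neighbourhood $V$ of $0$ with $xV \subseteq S$. Replacing $V$ by $V \cap S$, which is still a neighbourhood of $0$ (as $S$ is open), we may assume $V \subseteq S$. Because $\uptau$ is by convention not the discrete topology, $\{0\}$ is not open, and in particular $V \neq \{0\}$. Pick any nonzero $b \in V$; then $b \in S$ and $a := xb \in xV \subseteq S$, so $x = a/b \in \Frac(S)$.

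There is no real obstacle here: the whole argument is a two-line unwinding of the definitions. The only subtlety worth flagging is the essential use of the non-discreteness clause built into the definition of ``ring topology'' in the paper's conventions, without which one could take $\uptau$ discrete and $S = \Zz \subseteq \Qq$ as a counterexample.
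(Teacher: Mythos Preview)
Your proof is correct and essentially the same as the paper's. The paper phrases it contrapositively---if $\alpha \notin \Frac(S)$ then $S \cap \alpha S = \{0\}$, forcing discreteness---but the underlying mechanism (continuity of multiplication by a fixed element, openness of $S$, and non-discreteness) is identical to yours.
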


\begin{proof}
Suppose that $\alpha \in K$ and $\alpha \notin \Frac(S)$.
Note that $S$ is a neighbourhood of zero.
Then $S \cap \alpha S = \{0\}$, hence $\uptau$ is discrete, contradiction.
\end{proof}

\begin{fact}
\label{fact:open subring 2}
Let $\uptau$ be a ring topology on $K$ and $S$ a bounded open subring of $K$.
Then $\uptau$ is the $S$-adic topology.
\end{fact}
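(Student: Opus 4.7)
The plan is to establish equality between $\uptau$ and the $S$-adic topology by comparing their neighbourhood bases at $0$, since both topologies are invariant under additive translation. Before doing so I would first record, using Fact~\ref{fact:open subring}, that $K = \Frac(S)$, and use this to verify that the candidate basis $\{aS : a \in K^\times\}$ at $0$ satisfies the axioms for a neighbourhood basis of a group topology on $(K,+)$: writing $a = a_1/a_2$ and $b = b_1/b_2$ with $a_i, b_i \in S \setminus \{0\}$, the element $c = a_1 b_1$ satisfies $cS \subseteq aS \cap bS$. Thus the $S$-adic topology is well-defined in our setting.

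For the inclusion showing that $\uptau$ refines the $S$-adic topology, I would argue that every basic $S$-adic neighbourhood $aS$ of $0$ is $\uptau$-open. Indeed, $S$ is $\uptau$-open by hypothesis, and for $a \in K^\times$ the map $x \mapsto ax$ is a $\uptau$-homeomorphism of $K$: both it and its inverse $x \mapsto a^{-1}x$ are continuous, multiplication by any fixed element being continuous in any ring topology. Hence $aS = a \cdot S$ is $\uptau$-open.

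For the reverse inclusion, I would invoke the boundedness of $S$. Given any $\uptau$-neighbourhood $U$ of $0$, boundedness yields a $\uptau$-neighbourhood $V$ of $0$ with $V \cdot S \subseteq U$. Because $\uptau$ is Hausdorff and non-discrete by the paper's standing conventions, $V$ contains some nonzero element $a$. Then $aS \subseteq V \cdot S \subseteq U$, so $U$ contains the basic $S$-adic neighbourhood $aS$ of $0$, completing the comparison.

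No step of this plan looks like a serious obstacle; the argument is a direct unpacking of the definitions of \emph{bounded} and \emph{ring topology}. The one subtle point is the extraction of a nonzero element from $V$ in the last step, which is where the non-discreteness assumption on ring topologies (built into the paper's conventions) is genuinely used; without it one could have $V = \{0\}$ and the inclusion would fail.
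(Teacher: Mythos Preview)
Your proposal is correct and follows essentially the same approach as the paper's proof: both directions are established exactly as you describe, using openness of $S$ for one inclusion and boundedness together with non-discreteness for the other. Your additional verification that $K=\Frac(S)$ and that $\{aS : a\in K^\times\}$ forms a filter base is more explicit than the paper, which simply takes the $S$-adic topology as already defined, but the substance is identical.
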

\noindent
Therefore the $R$-adic topologies are exactly the ring topologies which admit bounded open subrings.

\begin{proof}

Since $\{\alpha S+\beta: \alpha \in K^\times, \beta \in K\}$ is a basis for the $S$-adic topology and $S$ is open in $\uptau$, $\uptau$ is finer than the $S$-adic topology.
By boundedness of $S$ and non-discreteness of the $\uptau$-topology, for every $\uptau$-open neighbourhood $U$ of $0$ there exists an $\alpha \in K^\times$ with $\alpha S \subseteq U$.
This implies that the $S$-adic topology refines $\uptau$.
\end{proof}

\subsection{Commutative algebra}
\label{section:all those definitions}

Let $R$ be local with maximal ideal $\mfrak$.
Then $R$ is \textbf{henselian} if for any $f \in R[X]$ and $\alpha \in R$ with $f(\alpha) \equiv 0 \not\equiv f'(\alpha) \pmod{\mfrak}$ there is $\alpha^* \in R$ such that $f(\alpha^*) = 0$ and $\alpha^* \equiv \alpha \pmod{\mfrak}$.

\begin{fact}
\label{fact:hensel-equiv}
The following are equivalent for a local domain $R$ with maximal ideal $\mfrak$.
\begin{enumerate}
\item $R$ is henselian,
\item If $a_0,\ldots,a_{n - 1} \in \mfrak$ then $X^{n+1} - X^n + a_{n - 1} X^{n - 1} + \ldots + a_1 X + a_0$ has a root in $\mfrak + 1$.
\item If $a_0,\ldots,a_{n - 1} \in \mfrak$ then $X^{n+1} + X^n + a_{n - 1} X^{n - 1} + \ldots + a_1 X + a_0$ has a root in $\mfrak - 1$.
\end{enumerate}
\end{fact}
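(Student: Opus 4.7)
The plan is to establish the chain of implications (1) $\Rightarrow$ (2), (1) $\Rightarrow$ (3), (2) $\iff$ (3), and (2) $\Rightarrow$ (1); the last is the main obstacle.

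For (1) $\Rightarrow$ (2), I would apply the definition of henselianity to $f(X) := X^{n+1} - X^n + a_{n-1}X^{n-1} + \ldots + a_0$ at $\alpha = 1$. A direct computation yields $f(1) = \sum_{i=0}^{n-1} a_i \in \mfrak$ and $f'(1) = (n+1) - n + \sum_{i=1}^{n-1} i a_i \equiv 1 \pmod{\mfrak}$, so $f'(1) \in R^\times$. Henselianity then supplies a root $\beta \equiv 1 \pmod{\mfrak}$, i.e., $\beta \in \mfrak + 1$. The implication (1) $\Rightarrow$ (3) is analogous, evaluating the corresponding polynomial $X^{n+1} + X^n + a_{n-1}X^{n-1} + \ldots + a_0$ at $\alpha = -1$.

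For (2) $\iff$ (3), I would use the substitution $X \mapsto -X$: applied to $X^{n+1} - X^n + \sum_{i<n} a_i X^i$ it yields $(-1)^{n+1}\bigl(X^{n+1} + X^n + \sum_{i<n} (-1)^{n+1-i} a_i X^i\bigr)$. Dividing by the unit $(-1)^{n+1}$ produces a polynomial of the form in (3), with the lower coefficients $\pm a_i$ still in $\mfrak$. Under $X \mapsto -X$, roots in $1 + \mfrak$ correspond to roots in $-1 + \mfrak = \mfrak - 1$, and the argument is symmetric for the converse.

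For (2) $\Rightarrow$ (1), the main obstacle: given $f \in R[X]$ and $\alpha \in R$ with $f(\alpha) \in \mfrak$ and $f'(\alpha) \in R^\times$, I would first translate via $X \mapsto X + \alpha$ to reduce to $\alpha = 0$, so that $f(0) \in \mfrak$ and $f'(0) \in R^\times$. The difficulty is that (2) directly supplies roots only for a restricted class of polynomials --- monic polynomials of degree $n+1$ whose reduction modulo $\mfrak$ factors as $X^n(X-1)$ --- whereas the henselian hypothesis is posed for arbitrary $f$. To bridge this, I would construct from $f$ an auxiliary polynomial $\tilde f$ of the shape appearing in (2) in such a way that a root of $\tilde f$ in $1 + \mfrak$ guaranteed by (2) determines, via algebraic manipulation, a root of $f$ in $\mfrak$. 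In the monic case one normalizes by a scaling substitution $X = \lambda Y$ followed by a unit rescaling; when the leading coefficient of $f$ lies in $\mfrak$ one must instead multiply $f$ by a suitable polynomial (or pass to a reciprocal) and then extract the small root via a Vieta-type relation from the large root supplied by (2). This equivalence is a classical fact in the theory of henselian local rings and can be found in standard references.
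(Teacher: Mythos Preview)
Your proposal is essentially correct and matches the paper's approach: the paper proves (2)$\Leftrightarrow$(3) by the same substitution $Y=-X$, and for (1)$\Leftrightarrow$(2) simply cites \cite[Proposition~1]{Gabber-K-theory} rather than writing anything out. Your extra detail on the easy implications (1)$\Rightarrow$(2),(3) is fine, and your sketch of (2)$\Rightarrow$(1) is admittedly incomplete (the ``auxiliary polynomial'' and ``Vieta-type relation'' steps are not pinned down), but since you ultimately defer to standard references for this direction you are doing no less than the paper itself.
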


\begin{proof}
(1)$\Leftrightarrow$(2) is in \cite[Proposition 1]{Gabber-K-theory}.
(2)$\Leftrightarrow$(3) follows by considering the substitution $Y = -X$.
\end{proof}
\noindent
We gather some more intricate notions from commutative algebra, for use in Sections \ref{section:N-1}, \ref{section:excellent} and \ref{section:refinement}.
Let $S$ be a ring.
We let $\dim S$ be the Krull dimension of $S$.
If $S$ is local then $S$ is \textbf{regular} if $S$ is Noetherian and the maximal ideal of $S$ is generated by $\dim S$ elements.
This is a notion of non-singularity.
A locally Noetherian scheme is defined to be regular if all its stalks are regular local rings, and a Noetherian ring $R$ is defined to be regular if $\operatorname{Spec} R$ is regular, i.e.\ if all localizations of $R$ at prime ideals are regular local rings.

\begin{fact}
\label{fact:japan}
Suppose that $R$ is a one-dimensional Noetherian  domain, $K$ is the fraction field of $R$, and $S$ is the integral closure of $R$ in $K$.
Then $S$ is a regular ring.
\end{fact}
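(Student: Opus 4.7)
The plan is to identify $S$ as a Dedekind domain and then invoke the standard fact that Dedekind domains are regular. The key intermediate step, which is the main obstacle, is showing that $S$ is Noetherian; this is exactly the content of the Krull--Akizuki theorem for the case of the integral closure in the fraction field itself.

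First I would verify the hypotheses of Krull--Akizuki: $R$ is a one-dimensional Noetherian domain with fraction field $K$, and we take $L = K$ as our finite extension of $K$. The Krull--Akizuki theorem then yields that every ring lying between $R$ and $K$ is Noetherian and of Krull dimension at most one; in particular $S$ is Noetherian. Since $R \subseteq S$ and $R$ has a nonzero maximal ideal (as $R$ is not a field), the going-up theorem (or simply integrality) forces $\dim S \geq 1$, so $\dim S = 1$.

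Next, $S$ is integrally closed in $K$ by construction (the integral closure of any subring of $K$ in $K$ is integrally closed in $K$). Combined with the previous step, $S$ is a one-dimensional Noetherian integrally closed domain, hence a Dedekind domain.

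Finally, regularity of $S$ follows from the local characterization: $\Spec S$ is regular iff every localization $S_{\mathfrak{p}}$ at a prime is a regular local ring. For $\mathfrak{p} = (0)$, the localization is the field $K$, which is trivially regular. For a nonzero (hence maximal) prime $\mathfrak{p}$, the localization $S_{\mathfrak{p}}$ is a DVR by the standard structure theorem for Dedekind domains, and DVRs are regular local rings of dimension one. Thus $S$ is regular, completing the proof. The only nontrivial input is Krull--Akizuki; the remaining steps are essentially unpacking definitions together with the structure theorem for Dedekind domains.
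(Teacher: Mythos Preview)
Your proposal is correct and follows essentially the same route as the paper: invoke Krull--Akizuki to obtain that $S$ is Noetherian (and one-dimensional), observe that $S$ is normal by construction, and conclude that $S$ is a Dedekind domain and hence regular. The paper's proof is terser (citing Stacks Project tags for each step), but the logical structure is identical.
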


\begin{proof}
By Krull-Akizuki~\cite[Tag~00PG]{stacks-project} $S$ is Noetherian, and by \cite[Tag~00OK]{stacks-project} $S$ is one-dimensional.
A one-dimensional Noetherian normal domain is a Dedekind domain, hence regular~\cite[Tag~034X]{stacks-project}.
\end{proof}

\noindent
Let $R$ be a domain with fraction field $K$ and $S$ the integral closure of $R$ in $K$.
Then $R$ is \textbf{normal} if $R = S$, $R$ is $N$-1 if $S$ is a finite $R$-module, and $R$ is \textbf{Japanese} (or $N$-2) if the integral closure of $R$ in any finite field extension of $K$ is a finite $R$-module.
Non-Japanese Noetherian rings are viewed as~pathologies.

We now discuss quasi-excellent rings, a class of Noetherian rings, and the related slightly more restrictive class of excellent rings.
The definitions in full generality are somewhat technical, so we omit them.
We direct the readers to~\cite[Tag~07QT, 07GH, 07P7, 00NL]{stacks-project} for the definitions and to \cite{Rotthaus-excellent} for a friendlier introduction, as well as \cite[Exposé I]{TravauxDeGabber} for a comprehensive overview.
The class of excellent rings excludes certain pathologies that can arise for general Noetherian rings, but nevertheless includes virtually all ``naturally occurring'' Noetherian rings.

We give a definition of quasi-excellence for local rings.
Suppose that $L$ is a field and $R$ is an $L$-algebra.
Then $R$ is \textbf{geometrically regular} if $R \otimes_L L^\mathrm{alg}$ is regular, where $L^\mathrm{alg}$ is an algebraic closure of $L$.
Regularity implies geometric regularity when $L$ is perfect.
A morphism $R \to S$ of Noetherian rings is regular if $R \to S$ is flat and $S \otimes_{R} \Frac(R/\pfrak)$ is geometrically regular over $\Frac(R/\pfrak)$ for every prime ideal $\pfrak$ in $R$.
In scheme-theoretic language $R \to S$ is regular if it is flat and every scheme-theoretic fiber of $\Spec S \to \Spec R$ is geometrically regular.

\begin{fact}
  \label{fact:good-char}
  Let $S$ be a Noetherian local ring.
  \begin{enumerate}[leftmargin=*]
  \item $S$ is quasi-excellent if and only if $S \to \widehat{S}$ is regular, where $\widehat{S}$ is the completion.
  \item If $S$ is either normal or henselian, then $S$ is quasi-excellent if and only if it is excellent.
  \end{enumerate}
\end{fact}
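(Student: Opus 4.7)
The plan is to reduce each part to established results in the theory of (quasi-)excellent rings; the cited references already contain complete proofs, so my sketch indicates the structural shape of the argument and flags where the substantive content lies.

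For part (1), I would begin by unpacking the definition: a Noetherian ring $R$ is quasi-excellent when (i) it is a G-ring, i.e.\ every formal fibre $\widehat{R_{\mathfrak{p}}} \otimes_{R_{\mathfrak{p}}} \kappa(\mathfrak{q})$ is geometrically regular, and (ii) it satisfies the J-2 property, meaning the regular locus of any finitely generated $R$-algebra is open. The forward direction is then immediate: applying (i) at the maximal ideal gives regularity of $S \to \widehat S$. For the reverse direction, a flat base change argument first upgrades regularity of $S \to \widehat S$ to the G-ring condition at every prime, by comparing $\widehat{S_{\mathfrak{p}}}$ with a suitable localization of $\widehat S \otimes_S S_{\mathfrak{p}}$ and tracking geometric regularity of fibres through the factorization. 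The harder step is to deduce J-2 from the G-ring property in the local Noetherian setting; here I would invoke the classical theorem of Grothendieck (EGA IV$_2$, 7.7.2) or equivalently Matsumura 33.B.

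For part (2), the gap between quasi-excellence and excellence is precisely the universally catenary hypothesis, so the task reduces to showing that a quasi-excellent local Noetherian $S$ which is normal or henselian is automatically universally catenary. In the henselian case, I would combine a Ratliff-type catenarity theorem with the observation that in a henselian G-ring the formal fibres over $\mathfrak{m}$ are equidimensional; Cohen's structure theorem applied to $\widehat S$, together with the stability of universal catenarity under quotients and polynomial extensions, then transfers catenarity from $\widehat S$ back to $S$. In the normal case, I would follow Heinzer-Rotthaus-Wiegand: Serre's $R_1 + S_2$ criterion applied to $\widehat S$ forces the relevant formal fibre to be equidimensional, after which the same catenarity reduction goes through.

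The genuinely deep step is the implication G-ring $\Rightarrow$ J-2 in part (1); this is where the theory of excellent rings spends most of its effort, ultimately relying on Néron desingularization-style techniques to transfer openness of regular loci along smooth base changes to the completion. Everything else is comparatively routine bookkeeping, once the structure of the formal fibres and Cohen's structure theorem are in hand — so in practice the plan really is to cite the Stacks Project and EGA for the technical heart of the argument, and to check in each of the two directions that one has set things up correctly to apply them.
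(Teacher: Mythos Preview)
The paper does not actually prove this fact: it simply cites Matsumura \S34 and \cite[Exposé I, Proposition 5.5.1(ii)]{TravauxDeGabber} for part (1), and \cite[Corollary 2.3]{heinzer-rotthaus-wiegand-catenary-local} together with \cite[Tag 0C2F]{stacks-project} for part (2). Your sketch therefore goes well beyond the paper in unpacking the content, and your structural decomposition --- G-ring plus J-2 for (1), with the substantive work in deducing J-2; reduction to universal catenarity for (2), handled via Ratliff-type criteria --- is correct and matches the arguments behind the cited references.

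Two small corrections are worth making. First, in the henselian case of (2), the phrase ``the formal fibres over $\mathfrak{m}$ are equidimensional'' is not what you want: the formal fibre over the closed point is just the residue field. The relevant statement is that for a henselian local G-ring which is a domain, the completion $\widehat{S}$ is equidimensional (indeed a domain once $S$ is normal); combined with the fact that quotients of henselian G-rings are again henselian G-rings, Ratliff's criterion then gives universal catenarity. Second, your closing remark that the G-ring $\Rightarrow$ J-2 step ``ultimately rel[ies] on N\'eron desingularization-style techniques'' overstates matters: the original EGA proof predates Popescu's theorem and proceeds by more elementary openness-of-regular-locus arguments. Neither point affects the correctness of your overall plan.
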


\noindent
See the discussion in \cite[Section 34]{matsumura} or \cite[Exposé I, Proposition 5.5.1 (ii)]{TravauxDeGabber} for Fact~\ref{fact:good-char}(1) and \cite[Corollary 2.3]{heinzer-rotthaus-wiegand-catenary-local} or \cite[Tag~0C2F]{stacks-project} for Fact~\ref{fact:good-char}(2).
We may take regularity of $S \to \widehat{S}$ to be the definition of quasi-excellence for Noetherian  local rings.
Note that complete local Noetherian rings are trivially excellent by this definition.
We collect some general facts.
\begin{fact}
\label{fact:ring-properties}
\hspace{.1cm}
\begin{enumerate}
[leftmargin=*]
\item The class of normal rings is closed under localizations.
\item The class of quasi-excellent rings is closed under finite extensions, localizations, and quotients.
\item Complete local rings are excellent.
\item Quasi-excellent rings are Japanese.
\item The class of henselian local rings is closed under quotients.
\item The Henselization of a quasi-excellent local ring is quasi-excellent.
\item If $R$ is $N$-1 and $\Chara(K) = 0$ then $R$ is Japanese.
\end{enumerate}
\end{fact}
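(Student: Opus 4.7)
The plan is to verify each item by reducing to standard results from commutative algebra, mainly in the Stacks Project, Matsumura's \emph{Commutative Ring Theory}, or the Gabber--Raynaud treatment, so the ``proof'' takes the form of a pointer list rather than fresh arguments.

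For the easier items: (1) follows because integral closure commutes with localization, so a localization of a normal domain is again normal (\cite[Tag 00GV]{stacks-project}). Item (3) reduces to Fact~\ref{fact:good-char}(1): for $S$ complete Noetherian local, $S \to \widehat S$ is the identity and hence trivially regular, so $S$ is quasi-excellent; universal catenarity (needed to upgrade to full excellence) then follows from the Cohen structure theorem, which writes $S$ as a quotient of a regular local ring. Item (5) is a direct unwinding of the Hensel condition: an ideal $I$ of a henselian local $R$ sits inside the maximal ideal $\mathfrak m$, so every Hensel configuration $(\bar f, \bar\alpha)$ over $R/I$ lifts to a configuration $(f,\alpha)$ over $R$ to which the Hensel property of $R$ applies, and reducing the resulting root modulo $I$ supplies the required lift in $R/I$.

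The three items resting on more delicate commutative algebra are (2), (4), and (6). For (2), closure of quasi-excellence under localization, quotients, and finite extensions is recorded in the Stacks Project around \cite[Tag 07QU]{stacks-project} and in \cite{matsumura}, §32; the finite-extension case uses that regularity of the completion map is preserved under finite ring extensions. For (4), the implication quasi-excellent $\Rightarrow$ Japanese is in \cite[Tag 07QV]{stacks-project} and in \cite{matsumura}, Theorem 32.2. For (6), preservation of quasi-excellence under Henselization is the most technical citation: it follows from the description of Henselization as a filtered colimit of étale local extensions and the compatibility of such extensions with the regularity of $R \to \widehat R$; compare \cite[Exposé I]{TravauxDeGabber}.

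Finally, (7) is Nagata's classical theorem. The standard argument goes via the trace form: in characteristic zero every finite extension $L/\Frac(R)$ is separable, and after replacing $R$ by its normalization (which is finite over $R$ by the $N$-1 hypothesis) the integral closure of $R$ in $L$ embeds into the finitely generated $R$-module spanned by a dual basis of the trace pairing, so it is finitely generated over $R$ (see \cite{matsumura}, §31). None of the individual steps is a genuine obstacle, since each is standard; the only place where care is needed is the citation underlying (6), which ultimately relies on the refined analysis of excellence by Gabber and Raynaud rather than on elementary manipulations with the definition.
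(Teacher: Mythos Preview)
Your proposal is correct and matches the paper's approach: both proofs are essentially citation lists for standard commutative algebra facts, with the same references for (2), (3), (4), (5), and near-identical ones for (1) and (7).

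The one notable discrepancy is (6). The paper cites \cite[Corollaire 18.7.6]{EGA-IV-4}, so the result that the Henselization of a quasi-excellent local ring is quasi-excellent is already in EGA IV and does not depend on the later Gabber--Raynaud machinery you invoke. Your closing remark that this step ``ultimately relies on the refined analysis of excellence by Gabber and Raynaud rather than on elementary manipulations with the definition'' is therefore a misattribution: the argument in EGA proceeds via the fact that the Henselization has the same completion as the original ring, so regularity of $R \to \widehat R$ transfers directly. This is much closer to an ``elementary manipulation with the definition'' (via Fact~\ref{fact:good-char}(1)) than you suggest.
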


\begin{proof}
(1) is \cite[Tag~00GY]{stacks-project} and (2) is \cite[Tag~07QU]{stacks-project}.
(3) follows from Fact~\ref{fact:good-char}.
(4) is \cite[Tag~07QV]{stacks-project}.
(5) follows easily from the definitions.
(6) is \cite[Corollaire 18.7.6]{EGA-IV-4}.
(7) is \cite[Tag 032M]{stacks-project}.
\end{proof}

\begin{remark}\label{rmk:excellent-eg}
We now give some examples of excellent (in particular quasi-excellent) henselian local rings, most of which arise as local rings in various kinds of tame spaces.
Let $L$ be a field.
\begin{enumerate}[leftmargin=*]

\item Henselizations of localizations of finitely generated $L$-algebras are excellent.
  In particular the local ring \[L[[t_1,\ldots,t_n]]_\mathrm{alg} = \{ p \in L[[t_1, \dotsc, t_n]] \colon p \text{ algebraic over } L(t_1, \dotsc, t_n) \}\] is excellent.
(This is the Henselization of the localization of $L[t_1, \dotsc, t_n]$ at the maximal ideal $(t_1, \dotsc, t_k)$:
henselianity of $L[[t_1, \dotsc, t_n]]_{\mathrm{alg}}$ follows immediately from henselianity of $L[[t_1, \dotsc, t_n]]$,
and conversely the Henselization of $L[t_1, \dotsc, t_n]$ at $(t_1, \dotsc, t_n)$ is algebraically closed in the completion \cite[Corollary 44.3]{nagata-local}.)
When $L$ is real closed $L[[t_1,\ldots,t_n]]_\mathrm{alg}$ is the ring of germs of $n$-variable Nash functions at the origin~\cite[Corollary 8.1.6]{real-algebraic-geometry}.
  \item Complete Noetherian local rings, such as $L[[t_1, \dotsc, t_k]]$ and its quotients, are excellent.
  \item If $L$ is complete with respect to a norm the ring of covergent power series $L\{t_1,\ldots,t_n\}$ in $n$-variables is an excellent local ring. (See \cite[Theorem 45.5]{nagata-local} for henselianity,  \cite[(34.B)]{matsumura} for excellence in the case of $L = \mathbb{R}$ or $L = \mathbb{C}$, and \cite[Théorème 2.13]{ducros-excellent} for excellence in the non-archimedean case.)
Quotients of $\Cc\{t_1,\ldots,t_n\}$ arise as local rings of complex analytic varieties and when $L$ is non-archimedean quotients of $L\{t_1,\ldots,t_n\}$ arise as local rings of Berkovich spaces, see~\cite{ducros-excellent}.
\end{enumerate}
\end{remark}

\begin{fact}
  \label{fact:normalization}
  Let $R$ be a Noetherian henselian local domain.
  Then the integral closure $S$ of $R$ in a finite extension $L$ of its fraction field $K$ is also a henselian local domain.
\end{fact}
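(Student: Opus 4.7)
The plan is to realize $S$ as a directed union of finite $R$-subalgebras and then apply the classical structure theorem (going back to Nagata and Raynaud) which states that every finite algebra over a henselian local ring decomposes as a finite product of henselian local rings. Concretely, I would write $S = \bigcup_\lambda S_\lambda$, where $S_\lambda$ ranges over the finitely generated $R$-subalgebras of $S$; each such $S_\lambda$ is integral and finitely generated as an $R$-algebra, hence is a finite $R$-module (using $R$ Noetherian). By the structure theorem, each $S_\lambda$ decomposes as a finite product of henselian local rings. Since $S_\lambda$ embeds into the domain $S$, it has no nontrivial idempotents, so the product collapses: $S_\lambda$ is itself a henselian local ring, with some maximal ideal $\mfrak_\lambda$.

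Next, I would assemble the $\mfrak_\lambda$ into a maximal ideal of $S$. For $\lambda \le \mu$, the inclusion $S_\lambda \subseteq S_\mu$ is an integral extension of local rings, so $\mfrak_\mu \cap S_\lambda$ is a maximal ideal of $S_\lambda$ and therefore equals $\mfrak_\lambda$. Hence $\mfrak := \bigcup_\lambda \mfrak_\lambda$ is an ideal of $S$ with $\mfrak \cap S_\lambda = \mfrak_\lambda$ for every $\lambda$, and any $s \in S \setminus \mfrak$ lies in $S_\lambda \setminus \mfrak_\lambda$ for some $\lambda$ and is therefore a unit in $S_\lambda$, hence in $S$. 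This proves $S$ is local with maximal ideal $\mfrak$. To verify henselianity, I would invoke Fact~\ref{fact:hensel-equiv}(2): given $a_0, \ldots, a_{n-1} \in \mfrak$, all the coefficients lie in a common $\mfrak_\lambda$, and henselianity of $S_\lambda$ supplies a root of $X^{n+1} - X^n + a_{n-1} X^{n-1} + \ldots + a_0$ in $\mfrak_\lambda + 1 \subseteq \mfrak + 1$.

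The main subtlety to negotiate is that $S$ itself need not be Noetherian: the hypothesis does not include that $R$ is Japanese, so $S$ need not be a finite $R$-module and the structure theorem cannot be applied to $S$ directly. Passing to the directed system of finite $R$-subalgebras, where the theorem does apply, and then gluing via the elementary observation that an integral extension of local rings contracts the maximal ideal to the maximal ideal, is precisely the device that sidesteps this. Once the local structure on $S$ is in place, henselianity is immediate from the polynomial criterion of Fact~\ref{fact:hensel-equiv} because every such polynomial already lives over some $S_\lambda$.
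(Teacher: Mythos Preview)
Your proof is correct and follows essentially the same approach as the paper: both write $S$ as a directed union of domains finite over $R$, use the characterization of henselianity (that finite algebras over a henselian local ring are products of henselian local rings, hence local when there are no idempotents) to see that each such subring is henselian local, and then pass to the limit. The paper compresses the limit argument into the single remark that ``the class of henselian local domains is closed under direct limits,'' whereas you spell out the compatibility of the maximal ideals and the henselianity check via Fact~\ref{fact:hensel-equiv}; one minor note is that the parenthetical ``(using $R$ Noetherian)'' is not actually needed, since a finitely generated integral extension is already module-finite over any base ring.
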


\begin{proof}
  The integral closure $S$ is a direct limit of domains which are finite over $R$.
  Any domain finite over $R$ is itself a henselian and local by the characterization \cite[Tag~04GG (10)]{stacks-project} of henselianity, and the class of henselian local domains is closed under direct limits.
\end{proof}

\subsection{Resolution of Singularities}
\label{section:resolution}
A \textbf{resolution of singularities} of a reduced Noetherian scheme $W$ is given by a regular scheme $V$ and a proper birational morphism $V \to W$.
A resolution of singularities of a Noetherian ring $R$ is a resolution of singularities of $\Spec R$.

Fact~\ref{fact:one-dim-resolve} is related to the fact that a one-dimensional reduced $K$-variety admits a resolution of singularities.

\begin{fact}
\label{fact:one-dim-resolve}
Suppose that $R$ is a one-dimensional Noetherian domain and let $S$ be the integral closure of $R$ in $K = \operatorname{Frac}(R)$.
Then the following are equivalent:
\begin{enumerate}
\item $\Spec R$ admits a resolution of singularities.
\item $R$ is $N$-1 (i.e. $S$ is a finite $R$-module).
\item the natural morphism $\uppi\colon\Spec S \to \Spec R$ is a resolution of singularities for $\Spec R$.
\end{enumerate}
\end{fact}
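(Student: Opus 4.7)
The plan is to show (3) $\Rightarrow$ (1) $\Rightarrow$ (2) $\Rightarrow$ (3). The implication (3) $\Rightarrow$ (1) is immediate from the definition, so the real content is in (2) $\Rightarrow$ (3) and (1) $\Rightarrow$ (2), with the latter being the main obstacle.

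For (2) $\Rightarrow$ (3), I would argue as follows. Assume $S$ is a finite $R$-module. Then $\uppi\colon\Spec S \to \Spec R$ is a finite morphism, in particular proper. Since $S \subseteq K$ shares its fraction field with $R$, $\uppi$ restricts to an isomorphism over the generic point, hence is birational. By Fact~\ref{fact:japan}, $S$ is regular, so $\Spec S$ is a regular scheme and $\uppi$ is a resolution of singularities.

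For (1) $\Rightarrow$ (2), suppose $f\colon V \to \Spec R$ is a resolution of singularities. The strategy is to identify $S$ with $\Gamma(V,\mathcal{O}_V)$ and invoke Stein factorization to conclude finiteness over $R$. Since $f$ is birational and proper, $V$ is integral of dimension one with function field $K$, so every non-generic local ring $\mathcal{O}_{V,v}$ is a DVR of $K$ containing (the image of) $R$. Let $R' := \Gamma(V, \mathcal{O}_V) = \bigcap_{v \in V} \mathcal{O}_{V,v} \subseteq K$. On the one hand, each DVR $\mathcal{O}_{V,v}$ is integrally closed in $K$ and contains $R$, so it contains $S$; hence $R' \supseteq S$. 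On the other hand, applying Stein factorization to the proper morphism $f$ produces a factorization $V \to \Spec R' \to \Spec R$ with $\Spec R' \to \Spec R$ finite, so $R'$ is integral over $R$ and therefore $R' \subseteq S$. Combining these yields $R' = S$, which is thus finite over $R$.

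The main obstacle is the use of Stein factorization to obtain that $\Gamma(V, \mathcal{O}_V)$ is a finite $R$-module; this is where the properness hypothesis on the resolution is essential, and without it the argument would collapse. Once this is in hand, the identification of the ring of global sections with the integral closure $S$ follows cleanly from the fact that regular one-dimensional local rings are DVRs and hence automatically integrally closed in $K$. No further commutative algebra beyond Krull–Akizuki (already invoked in Fact~\ref{fact:japan}) and standard properties of proper morphisms is required.
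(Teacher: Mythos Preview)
Your argument is correct. For the implications $(3)\Rightarrow(1)$ and $(2)\Rightarrow(3)$ you do exactly what the paper does: regularity of $S$ from Fact~\ref{fact:japan}, and finiteness of $S$ over $R$ gives properness and birationality of $\uppi$.

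For $(1)\Rightarrow(2)$ the paper simply cites \cite[Section~2.4]{cutkosky}, whereas you supply a self-contained proof. Your route via Stein factorization is valid: properness of $f$ over the Noetherian base $\Spec R$ guarantees that $R' = \Gamma(V,\mathcal{O}_V)$ is a finite $R$-module, and your two inclusions $S \subseteq R'$ (each stalk is a DVR of $K$, hence integrally closed and containing $S$) and $R' \subseteq S$ (finiteness gives integrality) pin down $R' = S$. A slightly lighter alternative, likely closer to what the reference does, is to observe that $f$ is quasi-finite (both schemes are one-dimensional and $f$ is dominant, so every fibre is zero-dimensional) and hence finite by Zariski's Main Theorem; then $V = \Spec S'$ is affine with $S'$ a finite $R$-module, and the same pair of inclusions forces $S' = S$. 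Both arguments yield the same conclusion with comparable effort; yours has the virtue of making the role of properness transparent through Stein factorization, while the quasi-finite route avoids invoking the coherence theorem for proper pushforwards.
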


Recall that if $T$ is a finite extension of $R$ in $K$ then $\Spec T \to \Spec R$ is birational.

\begin{proof}
By Fact~\ref{fact:japan} $S$ is regular.
If $(2)$ holds then $\uppi$ is finite, hence proper and birational.
So (2) implies (3).
Clearly (3) implies (1).
See \cite[Section 2.4, p.~11, last paragraph before Exercise 2.15]{cutkosky} for a proof that (1) implies (2).
\end{proof}

\noindent
Fact~\ref{fact:hironaka} is a famous theorem of Hironaka \cite{Hironaka} (cited as in \cite[1.2 (i)]{Temkin_InsepLocalUnif}).

\begin{fact}
\label{fact:hironaka}
Suppose that $R$ is a quasi-excellent local domain of residue characteristic zero.
Then any reduced scheme of finite type over $R$ admits a resolution of singularities.
In particular $R$ admits a resolution of singularities.
\end{fact}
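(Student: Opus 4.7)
The plan is to follow Hironaka's strategy of embedded resolution by iterated blow-ups at regular centres, adapted to the quasi-excellent setting. Given a reduced scheme $W$ of finite type over $R$, the statement is local on $W$, so I would work affine-locally; by Noether normalisation and embedding the affine open in affine space over $R$, the task reduces to the embedded problem of producing a sequence of blow-ups $V \to Y$ along regular centres lying over the singular locus of $W$, such that the strict transform of $W$ in $V$ is regular (and the total transform has simple normal crossings with the exceptional divisor).

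The engine of the argument is a local invariant attached to the ideal sheaf $\mathcal{I}_W$ in the ambient regular scheme $Y$ --- a refinement of the Hilbert--Samuel function, or a multiplicity-based tuple. One shows that this invariant (a) takes only finitely many values on the Noetherian scheme $W$, (b) is upper semi-continuous, so in particular has a closed maximum locus, and (c) that this maximum locus is itself regular. Blowing up this locus strictly decreases the invariant along the exceptional divisor, and a well-ordering argument on the value set combined with Noetherian induction terminates the procedure.

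The essential roles of the hypotheses are as follows. Quasi-excellence ensures that the regular locus is open on any scheme of finite type over $R$, and (via Fact~\ref{fact:good-char}(1)) that the completion map $R \to \widehat{R}$ is regular; this is what permits invariants computed on formal completions to descend to $R$, and reduces global arguments to the classical case over a field. Residue characteristic zero is needed in order to produce \emph{maximal contact} hypersurfaces via differential operators, which is the device that reduces the resolution problem in $Y$ to one in an ambient scheme of strictly smaller dimension; iterating this descent is what allows the invariant to be defined and controlled.

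The hardest step is proving that the invariant strictly drops after a single blow-up along the equi-invariant locus --- one must rule out a ``translational'' point on the exceptional divisor where the singularity is not improved. In residue characteristic zero this is handled by Tschirnhaus transformations, which eliminate the obstructing leading monomial by a differential change of variable. This step is precisely where the residue characteristic hypothesis is unavoidable, and its failure in positive characteristic is why resolution of singularities remains largely open in that setting, even for quasi-excellent schemes.
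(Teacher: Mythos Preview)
The paper does not prove this statement. It is presented as a \emph{Fact} --- the paper's label for results quoted from the literature --- and attributed directly to Hironaka, with the precise formulation taken from Temkin's survey. No argument is given or expected; the result is used as a black box (and in fact the paper immediately moves on to Gabber's weaker alteration theorem, which is what is actually used later).

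Your proposal is therefore not being compared against any argument in the paper. As a high-level outline of the Hironaka (or Villamayor / Bierstone--Milman) machinery it is broadly on target, but it is a sketch, not a proof: the original argument spans hundreds of pages, and none of the load-bearing assertions you list (that the maximum locus of the invariant is regular, that the invariant strictly drops after a permissible blow-up, that the procedure terminates) can be justified in a paragraph. There are also a couple of imprecisions worth flagging: Noether normalisation is a statement over a field, not over an arbitrary local base $R$, so your embedding step does not go through as written; and the equi-invariant locus is not automatically regular --- arranging this is part of the hard work, typically via the maximal-contact descent you allude to. In the context of this paper, the appropriate treatment is exactly what the authors do: cite the theorem and move on.
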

\noindent
Fact~\ref{fact:hironaka} in positive residue characteristic is of course an open conjecture~\cite[7.9.6]{EGAIV-2}.
We use a weaker form of resolution of singularities due to Gabber.
Suppose that $R$ is a Noetherian domain.
An \textbf{altered local uniformization}\footnote{The
  terminology is borrowed from \cite[1.2 (iv)]{Temkin_InsepLocalUnif}.
}
of $R$ consists of regular integral schemes $V_1, \dotsc, V_n$ and generically finite dominant morphisms $V_i \to \Spec R$ of finite type such that every valuation ring $\mathcal{O}$ containing $R$ can be prolonged to a valuation ring $\mathcal{O}^*$ centered on some $V_i$, i.e.\ there exists a commutative diagram as follows:
\[ \xymatrix{
    \Spec \mathcal{O}^* \ar[r] \ar[d] & V_i \ar[d] \\
    \Spec \mathcal{O} \ar[r] & \Spec R
}\]
The valuative criterion for properness implies that a resolution of singularities is an altered local uniformization.

\begin{theorem}[Gabber]
\label{thm:quasiExcellentResolutionByAlterations}
A quasi-excellent domain admits an altered local uniformization.
\end{theorem}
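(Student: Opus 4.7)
The plan is to reduce the theorem to de Jong's theorem on alterations of schemes of finite type over a trait, using the quasi-excellence hypothesis to pass to the completion. A self-contained proof is far beyond the scope of this paper, so I will sketch only the reduction and defer the deep geometric input to \cite[Expos\'e IX]{TravauxDeGabber}.

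The first step would be to pass from $R$ to its completion $\widehat R$. By Fact~\ref{fact:good-char}(1) the morphism $R \to \widehat R$ is regular, hence flat with geometrically regular fibers. This has two consequences. First, every valuation ring $\mathcal O$ of $\Frac R$ containing $R$ prolongs to a valuation of $\Frac(\widehat R / \pfrak)$ centered on $\widehat R/\pfrak$ for a suitable minimal prime $\pfrak$ of $\widehat R$, since $\mathcal O \otimes_R \widehat R$ is nonzero by flatness and one can choose a valuation on the generic fiber dominating the image of $\mathcal O$. Second, by Popescu's theorem $\widehat R$ is a filtered colimit of smooth $R$-algebras, so any finite-type altered local uniformization of $\widehat R$ descends, via an intermediate smooth $R$-algebra, to one of $R$ (after possibly altering further to restore regularity). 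We are thus reduced to each minimal-prime quotient of $\widehat R$, which is a complete Noetherian local domain and therefore, by Cohen's structure theorem, finite over a power series ring $A[[x_1,\dotsc,x_n]]$ with $A$ either a field or a complete discrete valuation ring.

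In this setting I would invoke de Jong's alterations theorem, in the form refined and generalised by Gabber, to produce finitely many regular integral schemes with proper generically finite dominant morphisms onto $\Spec \widehat R$; the valuative criterion then supplies the prolongation property automatically. The flexibility of allowing a finite family and of replacing birationality by generic finiteness is crucial: it is precisely what makes the theorem available in positive and mixed characteristic, where ordinary resolution of singularities is open.

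The main obstacle is of course the alterations theorem itself, which proceeds by induction on dimension using de Jong's projection-and-semistable-family techniques together with Gabber's $\ell$-alteration methods (for a prime $\ell$ coprime to the residue characteristic) for handling inseparable phenomena. Since none of this geometric machinery is developed in the present paper, I would defer entirely to Gabber's treatment in \cite[Expos\'e IX]{TravauxDeGabber}.
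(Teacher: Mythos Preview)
The paper does not attempt any reduction; it cites Gabber's result directly. Specifically, \cite[Expos\'e~VII, Th\'eor\`eme~1.1]{TravauxDeGabber} produces regular integral schemes $V_i$ with finite-type morphisms $\uppi_i \colon V_i \to \Spec R$ forming a covering family in the topology of alterations (so each $\uppi_i$ is dominant and generically finite), and \cite[Expos\'e~IV, Th\'eor\`eme~4.2.1]{TravauxDeGabber} supplies the prolongation property for valuation rings. Note the relevant citations are to Expos\'es~VII and~IV, not Expos\'e~IX.

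Your reduction sketch has a genuine gap at the Popescu descent step. Writing $\widehat R = \varinjlim S_j$ with each $S_j$ smooth over $R$, a finite-type $V_i \to \Spec \widehat R$ does descend to some $V_i' \to \Spec S_j$, but two problems arise. First, regularity of $V_i$ does not imply regularity of $V_i'$: the morphism $\Spec \widehat R \to \Spec S_j$ is far from surjective, so you learn nothing about $V_i'$ over the remainder of $\Spec S_j$, and your proposed remedy ``alter further to restore regularity'' is precisely the statement being proved, now for a scheme essentially of finite type over $R$. Second, even granting a regular $V_i'$, the composite $V_i' \to \Spec S_j \to \Spec R$ has positive generic relative dimension whenever $S_j/R$ is not \'etale, so it is not generically finite; slicing down to repair this again costs regularity. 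The descent therefore does not reduce the problem to the complete case without already invoking the theorem over $R$ itself.
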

\begin{proof}
  By \cite[Exposé~VII, Théorème~1.1]{TravauxDeGabber}, there are regular integral schemes $V_1,\ldots,V_n$ and finite type morphisms $\uppi_i\colon V_i \to \Spec R$ such that $\uppi_1,\ldots,\uppi_n$ are a covering family in the Grothendieck topology of alterations \cite[Exposé~II, 2.3.3]{TravauxDeGabber}.
In particular, each $\uppi_i$ is dominant and generically finite.
The prolongation property for valuation rings follows from \cite[Exposé IV, Théorème 4.2.1]{TravauxDeGabber}.
\end{proof}
\noindent
There are non-quasi-excellent Noetherian local domains which admit an altered local uniformization.
For instance, this is trivially the case for regular local rings which are not quasi-excellent, see for example~\cite[Chapter 13 (34.B)]{matsumura}.

\section{The one dimensional case}
\label{section:N-1}
\noindent
Let $R \subsetneq K$ be a domain with fraction field $K$.

\begin{lemma}
\label{lem:adic-refinement}
Suppose that $R^*$ is a domain with $\Frac(R^*) = K$.
The following are~equivalent:
\begin{enumerate}
\item The $R$-adic topology on $K$ refines the $R^*$-adic topology,
\item $R^*$ is $R$-adically open,
\item $\alpha R \subseteq R^*$ for some $\alpha \in K^\times$,
\item $R$ is bounded in the $R^*$-adic topology.
\end{enumerate}
Hence the $R$-adic topology agrees with the $R^*$-adic topology if and only if there are $\alpha,\beta \in K^\times$ such that $\alpha R \subseteq R^*$ and $\beta R^* \subseteq R$, i.e. if $R,R^*$ is $R^*$-,$R$-adically bounded, respectively.
\end{lemma}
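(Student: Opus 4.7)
The plan is to prove the cycle $(1)\Rightarrow(2)\Rightarrow(3)\Rightarrow(1)$, establish $(3)\Leftrightarrow(4)$ separately, and then derive the ``hence'' statement as an immediate corollary. The lemma is formal: each of the four items is a different packaging of the comparison between the two ring topologies, so I do not expect any substantive obstacle. The only care needed is to distinguish basic $R$-adic neighborhoods of $0$ from basic neighborhoods of a general point, and to use routinely that $R$ and $R^*$ are subrings closed under addition and multiplication.

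For $(1)\Rightarrow(2)$, I observe that $R^* = 1 \cdot R^* + 0$ is itself a basic $R^*$-adic open, so if the $R$-adic topology refines the $R^*$-adic topology, then $R^*$ is $R$-adically open. For $(2)\Rightarrow(3)$, since $R^*$ is $R$-adically open and contains $0$, some basic $R$-adic neighborhood $\alpha R + \beta$ of $0$ lies in $R^*$; the condition $0 \in \alpha R + \beta$ forces $\beta/\alpha \in R$, so that $\alpha R + \beta = \alpha R$ and hence $\alpha R \subseteq R^*$. For $(3)\Rightarrow(1)$, given $\alpha R \subseteq R^*$, an arbitrary basic $R^*$-adic open $\gamma R^* + \delta$ contains, at each of its points $x$, the $R$-adic neighborhood $\alpha\gamma R + x$; indeed $\alpha\gamma R \subseteq \gamma R^*$ and $R^*$ is closed under addition, so $\alpha\gamma R + x \subseteq \gamma R^* + \delta$.

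For $(3)\Leftrightarrow(4)$, the $R^*$-adic neighborhoods of $0$ are precisely the sets $aR^*$ with $a \in K^\times$, so $R$ is $R^*$-adically bounded iff for every such $a$ there exists $b \in K^\times$ with $bR^* \cdot R \subseteq aR^*$. Taking $a = 1$ and using $1 \in R^*$ immediately yields $bR \subseteq R^*$, which is $(3)$; conversely, given $\alpha R \subseteq R^*$, setting $b = a\alpha$ produces $bR^* R = aR^*(\alpha R) \subseteq aR^* \cdot R^* \subseteq aR^*$. Finally, the two topologies agree iff each refines the other, which by $(1)\Leftrightarrow(3)\Leftrightarrow(4)$ translates exactly into the simultaneous containment $\alpha R \subseteq R^*$, $\beta R^* \subseteq R$ and equivalently the simultaneous boundedness of $R$ in the $R^*$-adic topology and of $R^*$ in the $R$-adic topology, as stated.
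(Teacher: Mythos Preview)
Your proof is correct and is exactly the routine verification the paper has in mind; the paper itself states only that the lemma ``follows easily from the definitions, so we leave it to the reader.'' One cosmetic point: when you say ``the $R^*$-adic neighborhoods of $0$ are precisely the sets $aR^*$,'' you of course mean the \emph{basic} neighborhoods (which, as you implicitly use, really do reduce to this form since $0\in aR^*+b$ forces $b\in aR^*$); this does not affect the argument.
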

\noindent
Lemma~\ref{lem:adic-refinement} follows easily from the definitions, so we leave it to the reader.

\begin{lemma}
\label{lem:finite-module}
Suppose that $R$ is Noetherian and $S$ is a subring of $K$ containing $R$.
Then the following are equivalent:
\begin{enumerate}
\item $S$ is a finite $R$-module,
\item the $R$-adic and $S$-adic topologies on $K$ agree.
\end{enumerate}
\end{lemma}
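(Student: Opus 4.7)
The plan is to reduce both implications to the criterion in Lemma~\ref{lem:adic-refinement}: the $R$-adic and $S$-adic topologies on $K$ agree if and only if there exist $\alpha, \beta \in K^\times$ with $\alpha R \subseteq S$ and $\beta S \subseteq R$. Since $R \subseteq S$ holds by hypothesis, the inclusion $\alpha R \subseteq S$ is automatic (take $\alpha = 1$), so all of the content, in both directions, lies in the existence of a common denominator $\beta \in K^\times$ with $\beta S \subseteq R$.

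For $(1) \Rightarrow (2)$, I would pick generators $s_1, \dotsc, s_n \in K$ of $S$ as an $R$-module, write each $s_i = a_i / b_i$ with $a_i, b_i \in R$ and $b_i \neq 0$, and set $\beta = b_1 \cdots b_n \in R \setminus \{0\}$. Then $\beta s_i \in R$ for every $i$, so $\beta S \subseteq R$, and Lemma~\ref{lem:adic-refinement} yields agreement of the two topologies. Note that Noetherianity of $R$ is not needed for this direction.

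For $(2) \Rightarrow (1)$, the agreement of topologies gives via Lemma~\ref{lem:adic-refinement} an element $\beta \in K^\times$ with $\beta S \subseteq R$; equivalently, $S \subseteq \beta^{-1} R$. The module $\beta^{-1} R$ is cyclic over $R$ and hence finitely generated, so since $R$ is Noetherian its submodule $S$ is a finite $R$-module. This is where the Noetherian hypothesis enters, and it is essential: without it, one would produce counterexamples by taking any overring $S \subseteq \beta^{-1} R$ which fails to be finitely generated as an $R$-module.

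There is no real obstacle in this argument; the lemma is essentially a transcription of Lemma~\ref{lem:adic-refinement} into commutative-algebra language, with the only non-trivial ingredient being the standard fact that submodules of finitely generated modules over a Noetherian ring are finitely generated.
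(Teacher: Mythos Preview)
Your proof is correct and follows essentially the same approach as the paper: both arguments reduce via Lemma~\ref{lem:adic-refinement} to the existence of $\beta \in K^\times$ with $\beta S \subseteq R$, obtain such $\beta$ as a common denominator of a finite generating set for $(1)\Rightarrow(2)$, and invoke Noetherianity of $R$ (submodules of finitely generated modules are finitely generated) for $(2)\Rightarrow(1)$.
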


\begin{proof}
Suppose (2).
By Lemma~\ref{lem:adic-refinement} there is $\alpha \in K^\times$ with $\alpha S \subseteq R$.
As $R$ is Noetherian $\alpha S$ is a finite $R$-module, so $S$ is a finite $R$-module.
Suppose (1).
By Lemma~\ref{lem:adic-refinement} it is enough to show $\alpha S \subseteq R$ for some $\alpha \in K^\times$.
We have $S = \beta_1 R + \ldots + \beta_n R$ for $\beta_1,\ldots,\beta_n \in K$.
Fix $\alpha \in K$ with $\alpha \beta_i \in R$ for all $i$.
Then $\alpha S = (\alpha \beta_1) R + \ldots + (\alpha \beta_n) R \subseteq R$.
\end{proof}
\noindent
Lemma~\ref{lem:N-1} is immediate from Lemma~\ref{lem:finite-module}.

\begin{lemma}
\label{lem:N-1}
Suppose that $R$ is Noetherian and $S$ is the integral closure of $R$ in $K$.
Then the following are equivalent:
\begin{enumerate}
\item $R$ is $N$-1,
\item the $R$-adic and $S$-adic topologies on $K$ agree.
\end{enumerate}
\end{lemma}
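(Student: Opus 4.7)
The plan is to observe that Lemma~\ref{lem:N-1} is a direct specialization of Lemma~\ref{lem:finite-module}. Unwinding the definition from Section~\ref{section:all those definitions}, $R$ being $N$-1 means precisely that the integral closure $S$ of $R$ in its fraction field $K$ is a finite $R$-module. So condition~(1) of Lemma~\ref{lem:N-1} is literally a rewording of condition~(1) of Lemma~\ref{lem:finite-module} for the specific choice of $S$.

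To apply Lemma~\ref{lem:finite-module}, I just need to verify its hypotheses: $R$ is Noetherian by assumption, and $S$ is a subring of $K$ containing $R$ (indeed, $S$ is a subring of $K$ by construction, and $R \subseteq S$ because every element of $R$ is integral over $R$). With these hypotheses met, Lemma~\ref{lem:finite-module} gives directly that $S$ is a finite $R$-module if and only if the $R$-adic and $S$-adic topologies on $K$ coincide, which is exactly the desired equivalence between (1) and (2).

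There is no genuine obstacle here — the whole content lies in Lemma~\ref{lem:finite-module} together with the definition of $N$-1. In writing up the proof I would simply state that it is immediate from Lemma~\ref{lem:finite-module} applied to the integral closure $S$, and leave it at that. (If one wanted a slightly more expansive write-up, one could explicitly invoke the characterization of topology agreement via Lemma~\ref{lem:adic-refinement}, pointing out that the existence of $\alpha \in K^\times$ with $\alpha S \subseteq R$ — equivalently, boundedness of $S$ in the $R$-adic topology — is exactly what turns the finite generation of $S$ as an $R$-module into the statement about topologies; but this is already the content of Lemma~\ref{lem:finite-module}.)
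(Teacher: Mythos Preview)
Your proposal is correct and matches the paper's own proof, which simply states that Lemma~\ref{lem:N-1} is immediate from Lemma~\ref{lem:finite-module}. Your verification of the hypotheses and unwinding of the definition of $N$-1 is exactly the intended argument.
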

\noindent
Proposition~\ref{prop:N-1} is a partial converse to our theorem that if $R$ is an excellent henselian local domain then the $R$-adic and $\seak$-topologies agree.
(Recall that an excellent ring is $N$-1.)

\begin{proposition}
\label{prop:N-1}
Suppose that $R$ is a Noetherian, henselian, and local,  and the $\Sa E_K$-topology agrees with the $R$-adic topology.
Then $R$ is $N$-$1$.
\end{proposition}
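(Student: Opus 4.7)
The plan is to pass to the integral closure $S$ of $R$ in $K$ and show that the $R$-adic and $S$-adic topologies on $K$ must coincide. By Lemma~\ref{lem:finite-module} this equality is equivalent to $S$ being a finite $R$-module, which is exactly the $N$-$1$ condition.

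The first key step is to verify that $S$ is itself a henselian local domain with $S \neq K$. This is essentially Fact~\ref{fact:normalization} taken with the finite extension equal to $K$ itself: writing $S$ as the filtered colimit of its finite $R$-subalgebras, each such subalgebra is a finite integral extension of the Noetherian henselian local ring $R$, hence is a henselian local domain, and henselian locality is preserved under filtered colimits with local transition maps. The inequality $S \neq K$ follows because the maximal ideal of $S$ lies over (and so contains) the nonzero maximal ideal $\mathfrak{m}_R$; equivalently $J(S) \neq 0$, and Fact~\ref{fact:jacobson-field} then guarantees that the $S$-adic topology is a genuine (Hausdorff, nondiscrete) field topology.

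The second step is a topological sandwich argument. Since $R \subseteq S \subseteq K$, every basic $S$-adic neighborhood $aS + b$ contains the basic $R$-adic neighborhood $aR + b$, so the $R$-adic topology refines the $S$-adic topology. Conversely, Fact~\ref{fact:intro-old-R-adic}(1) applied to the henselian local domain $S$ shows that the $S$-adic topology refines $\Sa E_K$. The standing hypothesis that $\Sa E_K$ coincides with the $R$-adic topology thus forces the $S$-adic topology to coincide with both, whence Lemma~\ref{lem:finite-module} yields $S$ finite over $R$, i.e.\ $R$ is $N$-$1$. The one caveat to watch for is that $S$ itself need not be Noetherian --- its potential failure to be finite over $R$ is precisely the phenomenon we are ruling out --- but neither Fact~\ref{fact:intro-old-R-adic}(1) nor Lemma~\ref{lem:finite-module} imposes Noetherianity on the larger ring, so this causes no obstacle.
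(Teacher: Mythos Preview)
Your argument is correct and follows essentially the same route as the paper's proof: pass to the integral closure $S$, use Fact~\ref{fact:normalization} to see $S$ is henselian local, sandwich the $S$-adic topology between the $R$-adic topology and $\Sa E_K$ via Fact~\ref{fact:intro-old-R-adic}(1), and conclude with Lemma~\ref{lem:N-1} (which is Lemma~\ref{lem:finite-module} specialized to the integral closure). Your additional remarks that $S \ne K$ and that $S$ need not be Noetherian are useful clarifications that the paper leaves implicit.
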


\begin{proof}
Let $S$ be the integral closure of $R$ in $K$.
As $R \subseteq S$, the $R$-adic topology refines the $S$-adic topology.
By Fact~\ref{fact:normalization} $S$ is a henselian local ring.
By Fact~\ref{fact:intro-old-R-adic}(1) the $S$-adic topology on $K$ refines the $\Sa E_K$-topology.
Hence the $S$-adic topology agrees with the $R$-adic topology.
Apply Lemma~\ref{lem:N-1}.
\end{proof}

\begin{corollary}
\label{cor:japan}
Suppose that $R$ is one-dimensional, Noetherian, and henselian local.
Then the following are equivalent:
\begin{enumerate}
\item the $\Sa E_K$-topology agrees with the $R$-adic topology.
\item $R$ is $N$-1.
\item $\operatorname{Spec} R$ admits a resolution of singularities.
\end{enumerate}
\end{corollary}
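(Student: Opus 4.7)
The plan is to assemble the equivalence from results already in place, since each implication is essentially packaged by an earlier fact or proposition. The equivalence $(2) \Leftrightarrow (3)$ is exactly Fact~\ref{fact:one-dim-resolve}, which applies because $R$ is one-dimensional and Noetherian. The implication $(1) \Rightarrow (2)$ is precisely Proposition~\ref{prop:N-1}, whose hypotheses (Noetherian, henselian, local, with $\Sa E_K$ agreeing with the $R$-adic topology) are supplied by our assumptions plus (1).

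All the new content lies in $(2) \Rightarrow (1)$. Assuming $R$ is $N$-1, I would let $S$ denote the integral closure of $R$ in $K$ and argue in two steps. First, since $R$ is $N$-1, Lemma~\ref{lem:N-1} tells us that the $R$-adic topology and the $S$-adic topology on $K$ coincide, so it suffices to identify the $S$-adic topology with $\Sa E_K$. Second, $S$ is a henselian local domain by Fact~\ref{fact:normalization} (applied to the trivial extension $L = K$), and $S$ is regular by Fact~\ref{fact:japan} (since $R$ is one-dimensional and Noetherian, so is $S$, and a one-dimensional Noetherian normal domain is Dedekind). With $S$ henselian local, Fact~\ref{fact:intro-old-R-adic}(1) gives that the $S$-adic topology refines $\Sa E_K$; with $S$ regular, Fact~\ref{fact:intro-old-R-adic}(3) gives that $\Sa E_K$ refines the $S$-adic topology. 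Combining these two refinements gives equality, which chains with the first step to yield (1).

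No step presents a real obstacle; the only thing to confirm is that the hypotheses of each cited result are genuinely satisfied. In particular, Fact~\ref{fact:normalization} is invoked without needing $S$ to be finite over $R$ (the ambient Fact is stated for arbitrary finite field extensions $L/K$, including $L = K$), so that even the general henselian normalization is available, and the $N$-1 hypothesis is used purely through Lemma~\ref{lem:N-1} to pass between the $R$-adic and $S$-adic topologies.
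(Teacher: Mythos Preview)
Your proof is correct and follows essentially the same route as the paper's. The only cosmetic difference is that the paper invokes Fact~\ref{fact:intro-old-R-adic}(2) rather than (3) for the direction $\Sa E_K$ refines the $S$-adic topology (using that the regular one-dimensional local ring $S$ is a DVR, hence a valuation ring whose fraction field cannot be separably closed); your appeal to part (3) via regularity of $S$ works just as well.
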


\begin{proof}
The equivalence of (2) and (3) is Fact \ref{fact:one-dim-resolve}.
Proposition~\ref{prop:N-1} shows that (1) implies (2).
Suppose that $R$ is $N$-1 and let $S$ be the integral closure of $R$ in $K$.
Then $S$ is itself local by Fact \ref{fact:normalization}.
By Lemma~\ref{lem:N-1} it is enough to show that the $\Sa E_K$-topology agrees with the $S$-adic topology.
By Facts~\ref{fact:normalization} and \ref{fact:intro-old-R-adic}(1) the $S$-adic topology refines the $\Sa E_K$-topology.
By Facts~\ref{fact:japan} and \ref{fact:intro-old-R-adic}(2) the $\Sa E_K$-topology refines the $S$-adic topology.
\end{proof}

\section{The quasi-excellent case}
\label{section:excellent}
\noindent
Let again $R \subsetneq K$ be a domain with fraction field $K$.
The central result of this section is the following theorem, which generalizes Fact \ref{fact:intro-old-R-adic}(3).
\begin{theorem}
\label{thm:resolutionByAlterations}
Suppose that  $R$ is local, normal, and Noetherian.
If $R$ admits an altered local uniformization, then the étale open topology over $K$ refines the $R$-adic topology.
\end{theorem}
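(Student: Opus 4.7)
By Facts \ref{fact:refine} and \ref{fact:refine-1}, it suffices to find a nonempty $\Sa E_K$-open subset of $K$ contained in $R$. The plan is to build this subset from the components of the altered local uniformization, using Fact \ref{fact:intro-old-R-adic}(3) as the key input in the regular case.

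For each member $\pi_i \colon V_i \to \Spec R$ of the altered local uniformization, the function field $L_i$ of $V_i$ is a finite extension of $K$. For any non-generic point $p \in V_i$, the local ring $\Oo_{V_i, p}$ is a proper regular local subring of $L_i$, so Fact \ref{fact:intro-old-R-adic}(3) applied to $\Oo_{V_i, p} \subsetneq L_i$ yields that $\Sa E_{L_i}$ refines the $\Oo_{V_i, p}$-adic topology on $L_i$. Since $L_i/K$ is finite, Fact \ref{fact:restrict} then states that the intersection with $K$ of any $\Sa E_{L_i}$-open subset of $L_i$ is $\Sa E_K$-open in $K$.

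The heart of the argument is to combine these local $\Sa E_K$-opens coming from the various $V_i$ into a single one contained in $R$. Three ingredients come into play: (i) Noetherian normality of $R$ gives $R = \bigcap_v \Oo_v$ where $v$ ranges over valuation rings of $K$ containing $R$ (equivalently, over the DVRs $R_{\mathfrak{p}}$ for $\mathfrak{p}$ a height-one prime of $R$); (ii) the ALU condition ensures that each such $v$ prolongs to a valuation of some $L_i$ centered at a point of $V_i$; (iii) each special fibre $\pi_i^{-1}(\mathfrak{m})$ is a scheme of finite type over $R/\mathfrak{m}$, hence has only finitely many irreducible components, so the centers one needs to consider on each $V_i$ form a manageably finite data set. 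The strategy is to use finitely many well-chosen points on each $V_i$ — concretely, one per irreducible component of the relevant fibres, hence finitely many in total — to produce finitely many $\Sa E_K$-opens whose intersection sits inside every $\Oo_v \supseteq R$, and thus inside $R$.

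The principal obstacle is executing step three. A naive intersection $\bigcap \Oo_{V_i, p} \cap K$ fails, since this contraction always \emph{contains} $R$ rather than being contained in it. The correct construction must instead produce $\Sa E_{L_i}$-open sets whose $K$-traces land inside particular $R_\mathfrak{p}$'s (or finite intersections thereof), and then match these up combinatorially via the ALU so that every height-one prime of $R$ is covered. This will likely require working with ideals inside $\Oo_{V_i, p}$ together with a Néron-style or Artin-approximation smoothing to convert $\Sa E_{L_i}$-open subsets into honest $K$-étale images landing inside a designated element of $R$, making this the technically most delicate part of the proof.
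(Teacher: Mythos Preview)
Your outline correctly isolates the architecture of the argument: pass to finitely many codimension-one points on the $V_i$ lying over the closed point of $\Spec R$, use that the local rings there are regular (hence DVRs in codimension one), invoke the prolongation clause of the altered local uniformization, and use normality of $R$ to recover $R$ from valuation rings dominating it. You also correctly flag that the naive intersection of the contracted local rings contains $R$ rather than being contained in it, and that this is where the real work lies.

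However, your proposed resolution of this obstacle---``N\'eron-style or Artin-approximation smoothing to convert $\Sa E_{L_i}$-open subsets into honest $K$-\'etale images''---is not what is actually needed, and as stated is too vague to constitute a proof. The paper does not pass through $\Sa E_{L_i}$ and Fact~\ref{fact:restrict} at all. Instead, the missing ingredient is a concrete algebraic trick (Fact~\ref{fact:JL}, due to Jensen--Lenzing): if $S$ is a regular local ring of dimension $\ge 2$ and $\alpha,\beta$ in its fraction field satisfy $1+\alpha^4=\beta^2$ (resp.\ $1+\alpha^3=\beta^3$ in residue characteristic $2$), then $\alpha\in S$ or $1/\alpha\in S$. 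One then takes the $\Sa E_K$-open set
\[
U \;=\; \{\alpha\in K : 1+\alpha^4\in (K^\times)^2\}\ \cap\ \bigcap_{i,p}\mfrak_{i,p},
\]
where $\mfrak_{i,p}$ are the maximal ideals of the finitely many DVRs $\Oo_{X_i,p}\cap K$ coming from your codimension-one points. Each $\mfrak_{i,p}$ is $\Sa E_K$-open by Fact~\ref{fact:intro-old-R-adic}(2), so $U$ is $\Sa E_K$-open and contains $0$. If some $\alpha\in U$ were not in $R$, Lemma~\ref{lem:ringIntersectionOfNiceValnRings} produces a valuation ring $\Oo\supseteq R$ with $\alpha\notin\Oo$; prolong it via the ALU to a point $p'\in V_i$; then Fact~\ref{fact:JL} (or the DVR case when $\dim\Oo_{X_i,p'}=1$) forces $1/\alpha\in\Oo_{X_i,p'}$, hence $1/\alpha\in\Oo_{X_i,q}$ for one of your chosen codimension-one points $q$, contradicting $\alpha\in\mfrak_{i,q}$. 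This dichotomy ``$\alpha$ or $1/\alpha$'' is precisely what bridges the gap you identified, and it is absent from your proposal.
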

\noindent
We gather some lemmas.
Fact~\ref{fact:JL} is a slight generalization given in~\cite[Lemma~4.3]{thirdpaper} of a result of Jensen and Lenz-
ing~\cite[pg 52,55]{model-theoretic-algebra}.

\begin{fact}
\label{fact:JL}
Suppose that $R$ is a regular local domain with maximal ideal $\mfrak$ and $\dim R \ge 2$.
\begin{enumerate}
\item If $\Chara(R/\mfrak) \ne 2$ and $\alpha,\beta \in K$ satisfy $1 + \alpha^4 = \beta^2$ then $\alpha \in R$ or $1/\alpha \in R$.
\item If $\Chara(R/\mfrak) = 2$ and $\alpha,\beta \in K$ satisfy $1 + \alpha^3 = \beta^3$ then $\alpha \in R$ or $1/\alpha \in R$.
\end{enumerate}
\end{fact}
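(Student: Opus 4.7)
The strategy rests on two structural facts about a regular local ring $R$ of dimension $\geq 2$: it is a UFD (by Auslander--Buchsbaum) and it is normal. Suppose toward contradiction that $\alpha \notin R \cup R^{-1}$, and write $\alpha = a/b$ with $a,b \in R$ coprime; the hypothesis forces both $a$ and $b$ to be non-units. (Note that this situation does not arise in a DVR, which is why the $\dim R \ge 2$ hypothesis is essential.)

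\textbf{Part (1), $\Chara(R/\mfrak) \neq 2$.} Substituting $\alpha = a/b$ into $1 + \alpha^4 = \beta^2$, writing $\beta = c/d$ in lowest terms, and clearing denominators yields $(a^4+b^4)d^2 = b^4 c^2$. Since $\gcd(a,b) = 1$ implies $\gcd(a^4+b^4, b^4) = 1$, and $\gcd(c,d) = 1$, unique factorization forces $d^2$ and $b^4$ to be associate. In the UFD $R$ this gives $d = ub^2$ for a unit $u$, and hence $(c/u)^2 = a^4 + b^4$; by normality $\gamma := c/u \in R$ and $\gamma^2 = a^4 + b^4$. The problem reduces to showing that $X^4 + Y^4 = Z^2$ has no solutions in $R$ with the first two entries coprime non-units, i.e.\ Fermat's theorem for exponent $4$. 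This is handled by infinite descent: view $(a^2, b^2, \gamma)$ as a primitive Pythagorean triple, use $2 \in R^\times$ (factoring over $R[i]$ if necessary) to obtain the parametrization $a^2 = m^2 - n^2$, $b^2 = 2mn$, $\gamma = m^2 + n^2$ with $m,n \in R$ coprime, and apply unique factorization to $b^2 = 2mn$ and $a^2 = (m-n)(m+n)$ to produce a new solution $(p,q,r)$ with $p^4 + q^4 = r^2$, $p,q$ coprime non-units, and strictly smaller $\mfrak$-adic order. Since the $\mfrak$-adic order of any nonzero element of $R$ is a finite nonnegative integer by Krull's intersection theorem, the descent terminates in a contradiction.

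\textbf{Part (2), $\Chara(R/\mfrak) = 2$.} The analogous reduction for $1 + \alpha^3 = \beta^3$, using $\gcd(a^3+b^3, b^3) = 1$ and normality, yields $a^3 + b^3 = \gamma^3$ in $R$ with $a,b$ coprime non-units. Factor $b^3 = \gamma^3 - a^3 = (\gamma-a)(\gamma^2 + a\gamma + a^2)$; in characteristic $2$, a short calculation shows any common prime divisor of the two right-hand factors must divide both $a$ and $\gamma$, hence also $b$, contradicting coprimality. Unique factorization then writes each of $\gamma - a$ and $\gamma^2 + a\gamma + a^2$ as a unit times a cube, and further manipulation --- adjoining a primitive cube root of unity to split $\gamma^2 + a\gamma + a^2$ linearly if necessary --- yields a strictly smaller solution, again contradicting well-foundedness of the $\mfrak$-adic order.

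The main obstacle is carrying out these Fermat descents cleanly in the UFD $R$ rather than in $\Zz$: the classical parity and positivity arguments must be replaced by unit-versus-prime-factorization arguments valid in $R$, and one must verify that any auxiliary ring extension ($R[i]$ in Part (1), $R[\omega]$ in Part (2)) retains enough unique factorization and a compatible $\mfrak$-adic order to drive the descent. Aside from these adaptations, both statements follow essentially from the classical Fermat arguments applied in the UFD setting.
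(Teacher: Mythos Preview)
The paper does not itself prove this fact; it is cited from \cite[Lemma~4.3]{thirdpaper}, a mild generalization of a result of Jensen--Lenzing. There is therefore no in-paper argument to compare against directly.

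Your reduction is correct up to the point where one has $a^4 + b^4 = c^2$ (resp.\ $a^3 + b^3 = c^3$) with $a, b$ coprime non-units in the UFD $R$. But the Fermat descent you sketch does not go through over a general regular local ring, and the obstruction is precisely the one you flag without resolving: the unit group. In the step ``$p, q, p^2+q^2$ pairwise coprime, so each is a square'', over $\mathbb{Z}$ positivity pins down the unit ambiguity, but over $R$ each factor is only a \emph{unit times} a square, yielding an equation of the shape $u_3 r^2 = u_1 p^4 + u_2 q^4$ with units $u_i$ that need not be squares; the descent does not reproduce an equation of the same form, so there is nothing to iterate. The same problem already arises in the Pythagorean parametrization step, where the classical argument relies on parity. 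Passing to $R[i]$ or $R[\omega]$ does not help: these rings need not be local (for instance when $-1$ is a square in $R/\mfrak$ but not in $R$, which can occur since $R$ is not assumed henselian), and their unit groups are in any case no smaller. Separately, the $\mfrak$-adic order is not a valid descent invariant here: tracing through the parametrization one finds that the order of the new ``$c$'' can exceed that of the old one when $\operatorname{ord}(a)$ and $\operatorname{ord}(b)$ are far apart. As written, then, the argument has a genuine gap; the classical infinite-descent proof does not transplant from $\mathbb{Z}$ to an arbitrary regular local UFD without substantial new input.
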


\begin{lemma}
\label{lem:discrete-val}
Suppose that $R$ is a local domain, $O_1,\ldots,O_k$ are discrete valuation
subrings of $K$ with maximal ideals $\mfrak_1,\ldots,\mfrak_k$, respectively.
Let $\mathfrak a = \mfrak_1 \cap \ldots \cap \mfrak_k$.
Suppose that one of the following holds:
\begin{enumerate}
\item $\Chara(K) \ne 2$ and if $\alpha \in \mathfrak a, \beta \in K$ satisfy $1 + \alpha^4 = \beta^2$ then $\alpha \in R$,
\item $\Chara(K) \ne 3$ and if $\alpha \in \mathfrak a, \beta \in K$ satisfy $1 + \alpha^3 = \beta^3$ then $\alpha \in R$,
\end{enumerate}
Then the \'etale open topology over $K$ refines the $R$-adic topology.
\end{lemma}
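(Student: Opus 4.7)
The plan is to apply Fact~\ref{fact:refine-1}, which reduces the lemma to producing a nonempty $\Sa E_K$-open subset of $R$. This reduction is valid because $R \subsetneq K$ is a local domain with fraction field $K$, so $R$ is not a field and hence $J(R) = \mfrak_R \neq 0$; by Fact~\ref{fact:jacobson-field} the $R$-adic topology is a field topology, so Fact~\ref{fact:refine-1} applies. I treat case (1) in detail; case (2) is entirely parallel, replacing the curve $y^2 = 1 + x^4$ by $y^3 = 1 + x^3$ and squares by cubes throughout, with the Jacobian criterion needing $\Chara(K) \neq 3$ instead of $\neq 2$.

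First I extract the étale open handed to us by the hypothesis. Let
\[ U := \{\alpha \in K : 1 + \alpha^4 \in (K^\times)^2\}. \]
Consider the affine variety $V := \{(x,y) \in \Aa^2 : y^2 = 1 + x^4\} \cap \{y \neq 0\}$ over $K$. The Jacobian criterion, using $\Chara(K) \neq 2$, shows that the projection $(x,y) \mapsto x$ from $V$ to $\Aa^1$ is étale. By Fact~\ref{fact:basic}(2), its image on $K$-points is $\Sa E_K$-open, and this image is exactly $U$. Moreover $U$ is nonempty, since $0 \in U$ (take $\beta = 1$). The hypothesis then reads $U \cap \mathfrak{a} \subseteq R$.

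The crux of the plan is to show that $\mathfrak{a}$ is itself $\Sa E_K$-open; once this is established, $U \cap \mathfrak{a}$ is a nonempty $\Sa E_K$-open subset contained in $R$ and we are done. For each $i$, the maximal ideal $\mfrak_i$ is open in the $O_i$-adic topology on $K$. The plan is to invoke Fact~\ref{fact:intro-old-R-adic}(2) for each valuation ring $O_i$: provided the henselization of $K$ at the corresponding valuation is not separably closed, $\Sa E_K$ refines the $O_i$-adic topology, whence $\mfrak_i$ is $\Sa E_K$-open. Taking the finite intersection then shows that $\mathfrak{a} = \bigcap_i \mfrak_i$ is $\Sa E_K$-open, as required.

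The main obstacle is the degenerate case in which the henselization of $K$ at some $O_i$ is separably closed, a situation in which Fact~\ref{fact:intro-old-R-adic}(2) says nothing and the $\Sa E_K$-openness of $\mfrak_i$ is not immediate. I expect to handle this either by showing that such a valuation is incompatible with the geometric content of the hypothesis (so that the pathological $O_i$ can be discarded or replaced), or by constructing an $\Sa E_K$-open neighbourhood of $0$ inside $\mfrak_i$ directly, for example by combining the étale power images $c(K^\times)^n$ from Fact~\ref{fact:basic}(4) with the curve $V$ above to cut out by valuation arithmetic. The delicate point of any such direct construction is that $u \mapsto c u^n$ always produces elements of arbitrarily negative $v_i$, so a further étale restriction is needed to land inside $\mfrak_i$.
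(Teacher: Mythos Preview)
Your approach is exactly the paper's: reduce via Fact~\ref{fact:refine-1} to finding a nonempty $\Sa E_K$-open subset of $R$, take the \'etale image $U=\{\alpha:1+\alpha^4\in(K^\times)^2\}$, and intersect with $\mathfrak a$. The only substantive gap is your ``main obstacle'', which is in fact a non-issue and which the paper dispatches in one parenthetical remark.

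The point you are missing is that each $O_i$ is assumed to be a \emph{discrete} valuation ring. Henselization is an immediate extension, so the henselization of $K$ with respect to $O_i$ still has value group $\Zz$. A separably closed field carrying a nontrivial valuation must have divisible value group (for every $n$ prime to the characteristic, every element is an $n$th power, forcing $n$-divisibility of the value group), and $\Zz$ is not divisible. Hence the henselization is never separably closed, and Fact~\ref{fact:intro-old-R-adic}(2) applies to every $O_i$ without exception. The paper records exactly this, pointing to \cite[Corollary~6.17]{firstpaper}. Once you insert this observation, your proof is complete and coincides with the paper's; the speculative workarounds you sketch (discarding bad $O_i$, or building \'etale neighbourhoods inside $\mfrak_i$ by hand) are unnecessary.
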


\begin{proof}
Let us assume that $\Chara(K) \neq 2$.
By Fact~\ref{fact:refine-1} it is enough to show that $R$ has interior in the $\seak$-topology.
Each $\mfrak_i$ is $\Sa E_K$-open by Fact~\ref{fact:intro-old-R-adic}(2).
(The condition there that the henselization of $K$ with respect to $O_i$ is not separably closed holds since $O_i$ is discrete; cf.\ also \cite[Corollary 6.17]{firstpaper}.)
Hence $\mathfrak a$ is $\Sa E_K$-open.
Let $B = \{ \beta^2 : \beta \in K^\times\}$ and $f \colon K \to K$ be given by $f(\alpha) = 1 + \alpha^4$.
By Fact~\ref{fact:basic} $f^{-1}(B)$ is $\Sa E_K$-open.
Note that $f^{-1}(B) \cap \mathfrak a$ is contained in $R$ by $(1)$ in the assumption.
Finally $f^{-1}(B) \cap \mathfrak a$ is nonempty as $f^{-1}(B)$ and each $\mfrak_i$ is an $\Sa E_K$-neighbourhood of zero.
The argument for $\Chara(K) \neq 3$ is analogous.
\end{proof}

\begin{lemma}
\label{lem:ringIntersectionOfNiceValnRings}
Suppose that $R$ is normal, local, and Noetherian, and $\alpha \in K \setminus R$.
Then there exists a valuation ring $\mathcal{O}$ of $K$ dominating $R$ with $\alpha \not\in \mathcal{O}$.
\end{lemma}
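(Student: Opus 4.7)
The plan is to reduce the existence of the dominating valuation ring to a purely ideal-theoretic statement in the $R$-algebra $R[\beta]$, where $\beta = 1/\alpha$, and then to invoke Chevalley's theorem on the existence of a valuation ring of $K$ dominating a given local ring.

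Concretely, let $\mathfrak{m}$ be the maximal ideal of $R$ and set $\beta = 1/\alpha \in K^\times$ (note $\beta \neq 0$ since $\alpha \in K \setminus R$). Consider the ideal
\[ I \;=\; \mathfrak{m}\, R[\beta] \;+\; \beta\, R[\beta] \]
of $R[\beta]$. If I can show that $I$ is a proper ideal, then choosing any maximal ideal $\mathfrak{M}$ of $R[\beta]$ containing $I$, the localization $R[\beta]_{\mathfrak{M}}$ is a local subring of $K$ whose maximal ideal contains both $\mathfrak{m}$ and $\beta$. By the standard Chevalley/Zorn argument, there exists a valuation ring $\mathcal{O}$ of $K$ dominating $R[\beta]_\mathfrak{M}$; such an $\mathcal{O}$ then dominates $R$ and has $\beta$ in its maximal ideal, so $\alpha = \beta^{-1} \notin \mathcal{O}$, as required.

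The only nontrivial step is therefore to verify that $I$ is proper, and this is where normality enters. Suppose for contradiction that $1 \in I$. Then there is a relation
\[ 1 \;=\; h(\beta) \;+\; \beta\, g(\beta), \]
where $h \in \mathfrak{m}[X]$ (i.e.\ a polynomial over $R$ with all coefficients in $\mathfrak{m}$) and $g \in R[X]$. Writing $h(X) = h_0 + h_1 X + \cdots + h_n X^n$, the constant term yields $1 - h_0 = \beta\, p(\beta)$ for some $p \in R[X]$, and $1 - h_0 \in 1 + \mathfrak{m} \subseteq R^\times$ since $R$ is local. Clearing denominators by multiplying through by an appropriate power $\alpha^{N+1}$ converts this identity into a monic polynomial equation for $\alpha$ with coefficients in $R$, contradicting the fact that $R$ is normal (integrally closed in $K$) together with $\alpha \notin R$.

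The main obstacle is essentially bookkeeping: one must ensure that the relation $1 \in I$ really does force $\alpha$ to be integral over $R$, which relies on the observation that the troublesome constant term $h_0$ lies in $\mathfrak{m}$ and thus $1 - h_0$ is a unit of $R$. Everything else is routine: the existence of the dominating valuation ring $\mathcal{O}$ is the classical fact that every local subring of a field is dominated by some valuation ring of that field. Note that the argument uses only that $R$ is local and normal; the Noetherian hypothesis of the lemma is not needed in the proof itself.
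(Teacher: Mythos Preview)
Your argument is correct. The properness of $I = \mathfrak{m}R[\beta] + \beta R[\beta]$ is exactly equivalent to $\alpha$ not being integral over $R$ (this is the standard lemma underlying the valuative criterion for integral closure), and once $I$ is proper the rest follows by Chevalley's domination theorem as you indicate. Your observation that the Noetherian hypothesis is unused is also correct.

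The paper takes a different route. It invokes the structure theorem that a normal Noetherian domain is the intersection of its localizations at height-one primes, so $\alpha \notin R_{\mathfrak p}$ for some such $\mathfrak p$; this $R_{\mathfrak p}$ is then a DVR. To obtain a valuation ring dominating $R$ itself (rather than merely $R_{\mathfrak p}$), the paper composes the place associated to $R_{\mathfrak p}$ with a place on the residue field $\kappa(\mathfrak p)$ coming from a valuation ring dominating $R/\mathfrak p$. Your approach avoids both the height-one decomposition (which genuinely needs Noetherianity) and the place-composition step, giving a shorter and more general proof. The paper's route, on the other hand, makes visible that the valuation can be chosen with a rank-one DVR as its coarsening, a feature not needed for the application but perhaps conceptually illuminating.
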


\begin{proof}
By normality there is a height one prime ideal $\pfrak$ in $R$ such that $\alpha \notin \Cal O_\pfrak$, see \cite[Chapter 7 (17.H) Theorem 38]{matsumura}.
Then $\Cal O_\pfrak$ is normal by Fact~\ref{fact:ring-properties}(1), and so $\Cal O_\pfrak$ is a DVR since it is a one-dimensional normal local domain \cite[Tag~00PD]{stacks-project}.
By Chevalley's extension theorem there is a valuation ring $\Cal O^*$ of $\Frac({\mathcal{O}/\mathfrak{p}})$ dominating the local ring $R/\mathfrak p$.
Let $\Cal O$ be the valuation ring corresponding to the composition of the places associated to $\Cal O_\pfrak$ and $\Cal O^*$.
Then $\alpha \not\in \Cal O$ since $\Cal O \subseteq \Cal O_\pfrak$, and by construction $\Cal O$ dominates $R$.
\end{proof}

\noindent
We now prove Theorem \ref{thm:resolutionByAlterations}.

\begin{proof} 
Let $\Pi = ( X_i \xrightarrow[]{\uppi_i} \Spec R : i \in \{1,\ldots,n\} )$ be an altered local uniformization of $R$.
We make some definitions and constructions for arbitrary $i \in \{1,\ldots,n\}$.
Recall that $X_i$ is integral and let $K_i$ be the function field of  $X_i$.
Let $\mfrak$ be the maximal ideal of $R$.
Then $\mfrak$ is the closed point of $\Spec R$, hence $\uppi^{-1}_i(\mfrak)$ is a proper closed subset of $X_i$ by dominance of $\uppi_i$.
The set $\uppi^{-1}_i(\mfrak)$ has only finitely many irreducible components, each of which is contained in an irreducible codimension one subset of $X_i$.
Let $A_i$ be a finite set of codimension one points in $X_i$ such that every point in $\uppi^{-1}_i(\mfrak)$ is in the closure of some $p \in A_i$.
By regularity $\Cal O_{X_i,p}$ is a DVR for every $p \in A_i$.
Let $\Cal O_{i,p} = \Cal O_{X_i,p} \cap K$ for every $i \in \{1,\ldots,n\}$ and $p \in A_i$.
Since the extension $K_i/K$ is finite, $\Cal O_{i,p}$ is a (non-trivial) DVR.
Let $\mfrak_{i,p}$ be the maximal ideal of each $\Cal O_{i,p}$.

Now suppose first that  $\Chara(R/\mfrak) \ne 2$, hence $\Chara(K) \ne 2$.
Suppose that $\alpha, \beta \in K$ satisfy $1 + \alpha^4 = \beta^2$ and $\alpha$ is in  $\bigcap_{1 \le i \le n} \bigcap_{p \in A_i} \mfrak_{i,p}$.
By Lemma \ref{lem:discrete-val} it is enough to show that $\alpha \in R$.
We suppose towards a contradiction that $\alpha \not\in R$.
By Lemma \ref{lem:ringIntersectionOfNiceValnRings}, there exists a valuation subring $\mathcal{O}$ of $K$ dominating $R$ with $\alpha \not\in \mathcal{O}$.
By the defining property of altered local uniformizations, there is $i \in \{1,\ldots,n\}$, $p \in X_i$, and a valuation subring $\Cal O^*$ of $K_i$ such that $\Cal O^*$ prolongs $\Cal O$ and $\Cal O^*$ dominates $\Cal O_{X_i,p}$.
Thus $\alpha \notin \Cal O_{X_i,p}$.
Fact~\ref{fact:JL} shows that $1/\alpha \in \Cal O_{X_i,p}$ when $\dim \Cal O_{X_i,p} \ge 2$;
in fact the same holds if $\dim \Cal O_{X_i,p} = 1$, since then $\Cal O_{X_i,p}$ is a valuation ring.
Since $\uppi_i(p) = \mfrak$ as $\Cal O^*$ dominates $R$,
by construction we can take $q \in A_i$ such that $\Cal O_{X_i,p} \subseteq \Cal O_{X_i,q}$.
Then $1/\alpha \in \Cal O_{X_i,q} \cap K = \Cal O_{i,q}$, which is a contradiction as $\alpha \in \mfrak_{i,q}$.

Finally, suppose that $\Chara(R/\mfrak) = 2$, hence $\Chara(K) \ne 3$.
Follow the same argument as above, replacing $1 + X^4 = Y^2$ with $1 + X^3 = Y^3$, and apply the second case of Lemma~\ref{lem:discrete-val}.
\end{proof}

\begin{theorem}
\label{thm:resolution-2}
If $R$ is quasi-excellent local then the $\Sa E_K$-topology refines the $R$-adic topology.
\end{theorem}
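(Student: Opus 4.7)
The plan is to reduce to Theorem~\ref{thm:resolutionByAlterations}, which requires the base ring to be local, Noetherian, and normal. Since a quasi-excellent local ring $R$ need not be normal, I would first pass to its integral closure $S$ in $K$, which is automatically normal. By Fact~\ref{fact:ring-properties}(4) the quasi-excellent ring $R$ is Japanese, so $S$ is a finite $R$-module; hence $S$ is Noetherian, semi-local (a finite module over a Noetherian local ring has only finitely many maximal ideals), and still quasi-excellent by Fact~\ref{fact:ring-properties}(2). Crucially, Lemma~\ref{lem:finite-module} gives that the $R$-adic and $S$-adic topologies on $K$ coincide, so it suffices to prove that $\seak$ refines the $S$-adic topology on every $K$-variety.

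The next step is to localize. Let $\mfrak_1,\dots,\mfrak_n$ be the maximal ideals of $S$ and set $S_j := S_{\mfrak_j}$. Each $S_j$ is local, Noetherian, normal (Fact~\ref{fact:ring-properties}(1)), and quasi-excellent (Fact~\ref{fact:ring-properties}(2)), with fraction field $K$. Gabber's Theorem~\ref{thm:quasiExcellentResolutionByAlterations} yields an altered local uniformization of $S_j$, and then Theorem~\ref{thm:resolutionByAlterations} gives that $\seak$ refines the $S_j$-adic topology on every $K$-variety. Since $S_j$ is local with nonzero maximal ideal, its adic topology is a field topology (Fact~\ref{fact:jacobson-field}), so Fact~\ref{fact:refine-1} furnishes a nonempty $\seak$-open subset $U_j \subseteq S_j$ of $K$.

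It then remains to patch these together into a single $\seak$-open subset of $S$. Using the translation $\seak$-homeomorphisms of Fact~\ref{fact:basic}(3), I would translate each $U_j$ by some $x_j \in U_j$ so that the new $U_j$ contains $0$; this keeps it inside $S_j$, since $S_j$ is closed under subtraction and $x_j \in S_j$. The finite intersection $U := \bigcap_{j=1}^{n} U_j$ is then a $\seak$-open neighbourhood of $0$, hence nonempty, and satisfies $U \subseteq \bigcap_{j=1}^{n} S_j = S$. The Jacobson radical of $S$ is $\bigcap_j \mfrak_j$, which contains the nonzero product $\mfrak_1 \cdots \mfrak_n$ of nonzero ideals in the domain $S$; so Fact~\ref{fact:refine-1} applies once more, now to $S$, and yields that $\seak$ refines the $S$-adic, and hence $R$-adic, topology on every $K$-variety. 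The principal obstacle is the possible failure of normality for $R$, which prevents a direct application of Theorem~\ref{thm:resolutionByAlterations}; the Japanese property of quasi-excellent rings makes the detour through the normalization topologically costless, and the semi-locality of $S$ ensures the patching step is a finite intersection.
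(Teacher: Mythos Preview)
Your proof is correct and follows essentially the same route as the paper: pass to the normalization $S$ (finite over $R$ by the Japanese property), reduce to the $S$-adic topology via Lemma~\ref{lem:finite-module}, localize at the finitely many maximal ideals to obtain normal quasi-excellent local rings $S_j$, and apply Theorem~\ref{thm:resolutionByAlterations} together with Gabber's theorem to each $S_j$. The only cosmetic difference is that the paper observes directly that each $S_j$ is $\seak$-open (since $\seak$ refines the $S_j$-adic topology and $S_j$ is $S_j$-adically open), so $S = \bigcap_j S_j$ is $\seak$-open without the translation-and-intersect step; your argument via Fact~\ref{fact:refine-1} and translating the $U_j$ to contain $0$ accomplishes the same thing.
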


\begin{proof}
By Fact~\ref{fact:ring-properties}(4) $R$ is $N$-1, so the integral closure $S$ of $R$ in $K$ is finite over $R$.
By Lemma~\ref{lem:N-1} it is enough to show that the $\Sa E_K$-topology refines the $S$-adic topology.
It is enough to show that $S$ is $\Sa E_K$-open.
Since $S$ is finite over the local ring $R$, $S$ has only finitely many maximal ideals $\mfrak_1,\ldots,\mfrak_k$.
Let $S_i$ be the localization of $S$ at $\mfrak_i$ for each $i \in \{1,\ldots,k\}$.
Then $S = S_1 \cap \ldots \cap S_k$, so it is enough to fix $i \in \{1,\ldots,k\}$ and show that $S_i$ is $\Sa E_K$-open.
Note that $S_i$ is a localization of a finite extension of the quasi-excellent ring $R$ and $S_i$ is a localization of the normal ring $S$.
By Fact~\ref{fact:ring-properties}
$S_i$ is quasi-excellent and normal.
Theorem~\ref{thm:resolutionByAlterations} (which applies by Theorem~\ref{thm:quasiExcellentResolutionByAlterations}) shows that $S_i$ is $\Sa E_K$-open.
\end{proof}

\begin{remark}
  In \cite[Theorem 3.35]{model-theoretic-algebra}, the henselian case of Fact \ref{fact:JL} is used to prove that any henselian regular local domain is first-order definable in its fraction field.
  Whether the same holds for a henselian quasi-excellent local domain $R$ remains open.

  We only obtain the weaker statement that the $R$-adic topology is definable in the fraction field $K$, i.e.\ there is a definable family of sets forming a basis for the $R$-adic topology:
  Indeed, we have shown in Lemma \ref{lem:discrete-val} that there is an étale image $\emptyset \neq U \subseteq K = \Aa^1_K(K)$ contained in $R$.
  Since $U$ is definable and open, the family $\{ aU + b \colon a \in K^\times, b \in K \}$ is a definable basis for the $R$-adic topology.

  Essentially the same argument shows that whenever $\Sa E_K$ is induced by a locally bounded field topology $\uptau$ (a situation which we shall study later in some detail), the topology $\uptau$ is definable.
\end{remark}

\section{Behaviour of \texorpdfstring{$\seak$}{E} under field extension}
\label{section:refinement}
\noindent
Suppose that $L/K$ is a finite field extension and let $[L:K] = d$.
We briefly describe the extension $\extlk(\seak)$ of the $\seak$-topology to $L$, see \cite[Section~4.5]{firstpaper} for details.
After fixing a $K$-basis for $L$ we may identify each $L^n$ with $K^{dn}$.
We declare the $\extlk(\Sa E_K)$-topology on $L^n$ to be the $\Sa E_K$-topology on $K^{dn}$.
This topology does not depend on the choice of the $K$-basis.
More generally, given a quasi-projective $L$-variety $V$ the $\extlk(\Sa E_K)$-topology on $V(L)$ is the $\Sa E_K$-topology on the $K$-points of the Weil restriction of $V$ (this set is canonically identified with $V(L)$.)
Any variety is Zariski-locally quasi-projective, so we can define the $\extlk(\seak)$-topology on the $K$-points of an arbitrary $K$-variety in a natural way.
For example $\operatorname{Ext}_{\Cc/\Rr}(\Sa E_\Rr)$ is the usual complex analytic topology over $\Cc$.

Endowing all $V(L)$ with the $\extlk(\Sa E_K)$-topology gives a well-behaved system of topologies in the sense of \cite[Definition 1.2]{firstpaper}, see the following consequence of \cite[Proposition-Definition 4.17]{firstpaper}.
\begin{fact}
\label{fact:extension-system}
Suppose that $L/K$ is finite and $V \to W$ is a morphism of $L$-varieties.
Then $V(L) \to W(L)$ is $\extlk(\seak)$-continuous.
In particular $L \to L, x \mapsto \alpha x + \beta$ is an $\extlk(\seak)$-homeomorphism for any $\alpha \in L^\times$, $\beta \in L$.
\end{fact}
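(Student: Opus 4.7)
The plan is to reduce $\extlk(\seak)$-continuity of $V(L) \to W(L)$ to ordinary $\seak$-continuity via Weil restriction, and then invoke Fact~\ref{fact:basic}(1).

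First I would handle the case where $V$ and $W$ are quasi-projective $L$-varieties. In this situation the Weil restrictions $\reslk V$ and $\reslk W$ exist as (quasi-projective) $K$-varieties, and any $L$-morphism $f \colon V \to W$ functorially induces a $K$-morphism $\reslk(f) \colon \reslk V \to \reslk W$. The defining adjunction of Weil restriction supplies a natural bijection $(\reslk V)(K) \cong V(L)$ identifying $\reslk(f)$ on $K$-points with the map $V(L) \to W(L)$ induced by $f$, and by definition the $\extlk(\seak)$-topologies on $V(L)$ and $W(L)$ are exactly the $\seak$-topologies on $(\reslk V)(K)$ and $(\reslk W)(K)$ transported along these bijections. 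Fact~\ref{fact:basic}(1) then gives $\seak$-continuity of $\reslk(f)$ on $K$-points, which is precisely $\extlk(\seak)$-continuity of $V(L) \to W(L)$.

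For a general morphism $f \colon V \to W$, cover $W$ by finitely many affine (hence quasi-projective) opens $W_j$, and cover each preimage $f^{-1}(W_j)$ by affine opens $V_{ij}$. Each restriction $V_{ij} \to W_j$ is of the quasi-projective type handled above, hence $\extlk(\seak)$-continuous on $L$-points. To glue these to continuity of $V(L) \to W(L)$, one needs that Zariski-open inclusions $V_{ij}(L) \hookrightarrow V(L)$ and $W_j(L) \hookrightarrow W(L)$ are $\extlk(\seak)$-open embeddings; this holds because open immersions are étale, so at the level of Weil restrictions (and their $K$-points) they fall under Fact~\ref{fact:basic}(2). Continuity is a local property, so the local continuities assemble to the global one.

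The ``in particular'' clause is then immediate: the map $x \mapsto \alpha x + \beta$ and its inverse $x \mapsto \alpha^{-1}(x-\beta)$ are both morphisms of $L$-varieties $\Aa^1_L \to \Aa^1_L$, hence each is $\extlk(\seak)$-continuous by the main assertion, so the map is an $\extlk(\seak)$-homeomorphism. The main obstacle I anticipate is not any of the substantive continuity steps, but the foundational bookkeeping establishing that the $\extlk(\seak)$-topology on an arbitrary $L$-variety is well-defined independently of the chosen quasi-projective cover, and that inclusions of such opens are $\extlk(\seak)$-open embeddings --- this is the content packaged in \cite[Proposition-Definition~4.17]{firstpaper}. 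Once that is in hand, the statement is a direct transport of Fact~\ref{fact:basic}(1) across the Weil restriction identification.
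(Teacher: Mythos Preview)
Your proposal is correct and matches the paper's own treatment: the paper does not give an independent proof but simply records Fact~\ref{fact:extension-system} as a consequence of \cite[Proposition-Definition~4.17]{firstpaper}, which packages precisely the Weil-restriction transport and the well-definedness of the glued topology that you sketch. Your unpacking of the argument---reduce to the quasi-projective case via Weil restriction, then invoke Fact~\ref{fact:basic}(1), with the gluing handled by the cited result---is exactly the intended route, and you correctly identify that the only nontrivial content lies in the foundational bookkeeping you defer to that citation.
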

\noindent
By \cite[Proposition~5.7]{firstpaper} $\extlk(\Sa E_K)$ refines $\Sa E_L$ for any finite $L/K$.
We would like to know when this refinement is strict.

Up to now we knew two examples.
If $K$ is real closed and $L = K(\sqrt{-1})$ then $\Sa E_L$ is the Zariski topology and $\seak$ is the order topology, hence $\extlk(\seak)$ strictly refines $\Sa E_L$.
Recall that the following are equivalent by \cite[Theorem~C.1]{firstpaper}:
\begin{enumerate}
\item $L$ is large,
\item the $\Sa E_L$-topology on $L$ is not discrete,
\item the $\Sa E_L$-topology on $V(L)$ is not discrete when $V$ is an $L$-variety with $V(L)$ infinite.
\end{enumerate}
By~\cite{Srinivasan} there are non-large fields with large finite extensions.
If $L$ is large and $K$ is not then the $\seak$-topology on $K^d$ is discrete, hence the $\extlk(\Sa E_K)$-topology on $L$ is discrete, and the $\Sa E_L$-topology on $L$ is not discrete.

We give a third example where $\extlk(\Sa E_K)$ strictly refines $\Sa E_L$.
This is also the first example where both $\Sa E_{K},\Sa E_{L}$ are non-discrete field topologies.

\begin{theorem}\label{thm:NonJapaneseExtTop}
  Let $R$ be a henselian regular local domain and $L$ a finite extension of the fraction field $K$ of $R$ such that the integral closure $S$ of $R$ in $L$ is not a finite $R$-module.
  Then $\extlk(\Sa E_K)$ strictly refines $\Sa E_L$.
\end{theorem}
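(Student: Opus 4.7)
The plan is to realize both $\extlk(\Sa E_K)$ and (a refinement of) $\Sa E_L$ as $R^*$-adic topologies for suitable subrings $R^* \subseteq L$, and then deduce strict refinement from the failure of $S$ to be a finite $R$-module.

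Since $R$ is henselian and regular local, Fact~\ref{fact:intro-old-R-adic}(1) and Fact~\ref{fact:intro-old-R-adic}(3) together imply that $\Sa E_K$ equals the $R$-adic topology on $K$. I would then choose a $K$-basis $\omega_1,\dotsc,\omega_d$ of $L$ lying in $S$ (always possible by clearing denominators in minimal polynomials), and form the ring $R' = R[\omega_1,\dotsc,\omega_d]$. Since each $\omega_i$ is integral over $R$, the ring $R'$ is a finite (hence Noetherian) $R$-module with $R \subseteq R' \subseteq S$ and $\Frac(R') = L$. The next step is to show that $R'$ is a bounded open subring of $L$ in the $\extlk(\Sa E_K)$-topology: the open $R$-submodule $M = \omega_1 R + \dotsb + \omega_d R$ (a basic neighbourhood of $0$ under the identification $L = K^d$) is contained in $R'$, and $cR' \subseteq M$ for some nonzero $c \in R$ because $R'$ is a finite $R$-module. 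These two containments, together with $R'$ being a ring, combine to yield boundedness via a short scaling computation such as $(acM)\cdot R' \subseteq ac R' \subseteq aM$. By Fact~\ref{fact:open subring 2}, $\extlk(\Sa E_K)$ coincides with the $R'$-adic topology on $L$.

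For the side of $\Sa E_L$, by Fact~\ref{fact:normalization} the ring $S$ is a henselian local domain, so Fact~\ref{fact:intro-old-R-adic}(1) yields that the $S$-adic topology refines $\Sa E_L$. One then observes that $S$ cannot be a finite $R'$-module, since otherwise $S$ would be finite over $R$ by transitivity, contradicting the hypothesis. Lemma~\ref{lem:finite-module}, applied to the Noetherian ring $R'$ and its overring $S$ inside $L = \Frac(R')$, thus forces the $R'$-adic and $S$-adic topologies on $L$ to differ, while Lemma~\ref{lem:adic-refinement} applied to $R' \subseteq S$ shows the $R'$-adic topology refines the $S$-adic one. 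Concatenating, $\extlk(\Sa E_K)$ (which is the $R'$-adic topology) strictly refines the $S$-adic topology, which in turn refines $\Sa E_L$, which is exactly what we want.

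The main delicate point I anticipate is the identification of $\extlk(\Sa E_K)$ with the $R'$-adic topology on $L$. One must first verify that $\extlk(\Sa E_K)$ is genuinely a ring topology on $L$, which follows from Fact~\ref{fact:extension-system} applied to the addition and multiplication morphisms of $\Aa^1_L$; the boundedness check above is where one really exploits that $M$ sits inside the \emph{ring} $R'$ rather than being merely an $R$-module.
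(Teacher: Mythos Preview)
Your proof is correct and follows essentially the same route as the paper's. The one difference is that you establish the equality $\extlk(\Sa E_K) = R'$-adic topology via the boundedness argument and Fact~\ref{fact:open subring 2}, whereas the paper only uses the weaker observation that $\extlk(\Sa E_K)$ refines the $R'$-adic topology (needing merely that $R'$ is open, together with Fact~\ref{fact:extension-system}); either suffices to run the chain $\extlk(\Sa E_K) \to R'\text{-adic} \to S\text{-adic} \to \Sa E_L$ and conclude strict refinement.
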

\noindent
Note that any $R$ as in the theorem is by definition not Japanese and hence not quasi-excellent.
Since regular local rings are normal, Fact~\ref{fact:ring-properties}(7) shows that the theorem is only ever applicable in positive characteristic.

Before proving the theorem, we give an important special case in the language of valued fields.
See \cite[Example 3.5]{Kuhlmann_TheDefect} for an example of this situation.
\begin{corollary}
  Let $v$ be a henselian discrete valuation on a field $K$, $L/K$ a finite extension and $v'$ the unique prolongation of $v$ to $L$.
  If $(L,v')/(K,v)$ is a defect extension, i.e.\ $e(v'/v)f(v'/v) \lneq [L : K]$ where $e$ and $f$ are the relative ramification index and inertia degree, then $\extlk(\Sa E_K)$ strictly refines $\Sa E_L$.
\end{corollary}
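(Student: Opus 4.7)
The plan is to deduce this corollary directly from Theorem \ref{thm:NonJapaneseExtTop} applied to $R = \Oo_v$, the valuation ring of $v$. Since $v$ is a henselian discrete valuation, $R$ is a henselian DVR, which is in particular a henselian regular local domain (it is one-dimensional with principal maximal ideal generated by a uniformizer $\pi$). Its fraction field is $K$, so the setup of Theorem \ref{thm:NonJapaneseExtTop} applies once we verify that the integral closure $S$ of $R$ in $L$ is not a finite $R$-module.

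First I would identify $S$ with the valuation ring $\Oo_{v'}$. Because $v'$ is the unique prolongation of $v$ to $L$ (which is automatic since $v$ is henselian), any element of $L$ integral over $R$ must satisfy $v' \geq 0$, so $S \subseteq \Oo_{v'}$; conversely, $\Oo_{v'}$ is integrally closed in $L$ and, being the union of the finitely generated $R$-subalgebras of $L$ consisting of $v'$-integers, consists of elements integral over $R$. Hence $S = \Oo_{v'}$ is itself a DVR, with some uniformizer $\pi'$ satisfying $\pi = u(\pi')^e$ for a unit $u \in S^\times$, and with residue field extension $S/\pi' S$ over $R/\pi R$ of degree $f$.

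Next I would establish the standard equivalence: $S$ is a finite $R$-module if and only if $[L:K] = ef$. Indeed, suppose $S$ is a finite $R$-module. Then, as $R$ is a PID and $S$ is torsion-free, $S$ is $R$-free of some rank $r$; tensoring with $K$ gives $L$, so $r = [L:K]$. Reducing modulo $\pi$ yields $\dim_{R/\pi R}(S/\pi S) = [L:K]$. On the other hand, filtering $S/\pi S = S/(\pi')^e S$ by the powers of $\pi'$, each successive quotient is isomorphic to $S/\pi' S$ as an $R/\pi R$-vector space, of dimension $f$; so $\dim_{R/\pi R}(S/\pi S) = ef$. This forces $[L:K] = ef$, contrary to the defect hypothesis. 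Therefore $S$ is not a finite $R$-module.

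With this in hand, Theorem \ref{thm:NonJapaneseExtTop} applies directly and yields that $\extlk(\Sa E_K)$ strictly refines $\Sa E_L$. The only nontrivial step is the identification of $S$ with $\Oo_{v'}$ and the defect-equals-non-finiteness dichotomy above; both are classical consequences of henselianity and the structure of DVRs, so the main obstacle is really just citing or briefly reproving the right fact rather than any new technical difficulty.
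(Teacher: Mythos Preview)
Your approach is exactly the paper's: apply Theorem~\ref{thm:NonJapaneseExtTop} with $R=\Oo_v$, after checking that $R$ is a henselian regular local domain, that the integral closure $S$ of $R$ in $L$ equals $\Oo_{v'}$, and that the defect hypothesis forces $S$ not to be finite over $R$. The paper simply cites Bourbaki for the last two facts, whereas you sketch direct arguments; your non-finiteness argument (free of rank $[L:K]$, then compare $\dim_{R/\pi R}(S/\pi S)$ via the $\pi'$-filtration) is precisely the standard proof behind the Bourbaki citation.

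One small point to fix: your justification of the inclusion $\Oo_{v'}\subseteq S$ does not work as written. Saying that $\Oo_{v'}$ is ``the union of the finitely generated $R$-subalgebras of $L$ consisting of $v'$-integers'' is vacuously true (any $x\in\Oo_{v'}$ lies in $R[x]$), but membership in a finitely generated $R$-\emph{algebra} does not give integrality; you would need a finite $R$-\emph{module}. A correct argument uses henselianity: all $K$-conjugates of $x\in\Oo_{v'}$ have non-negative value under any prolongation of $v$ to a normal closure, so the coefficients of the minimal polynomial of $x$ lie in $\Oo_v=R$. Alternatively, quote the standard fact (as the paper does) that the integral closure of a valuation ring in a finite extension is the intersection of the valuation rings of all prolongations, which here is just $\Oo_{v'}$.
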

\begin{proof}
  Let $R$ and $S$ be the valuation rings of $v$ and $v'$, respectively.
  Both are discrete valuation rings, in particular regular local domains.
  Furthermore, $S$ is the integral closure of $R$ in $L$ \cite[Chap.\ V, §8, no 3, Remarque]{Bourbaki_AlgebreComm567}, and the defect condition implies that $S$ is not a finite $R$-module \cite[Chap.\ V, §8, no 5, Théorème 2]{Bourbaki_AlgebreComm567}.
  Hence Theorem~\ref{thm:NonJapaneseExtTop} is applicable.
\end{proof}

\begin{proof}[Proof of Theorem \ref{thm:NonJapaneseExtTop}]
  Let $b_1, \dotsc, b_d \in L$ be a $K$-basis of $L$.
  By scaling with a suitable element of $R$, we may assume that for all $i$, $b_i \in S$.
  Hence $R' := R[b_1, \dotsc, b_d]$ is a finite $R$-module.
  The fraction field of $R'$ is $L$, and $S$ is the normalization of $R'$.
  By assumption, $S$ is not a finite $R'$-module, and thus by Lemma~\ref{lem:finite-module} the $R'$-adic topology on $L$ strictly refines the $S$-adic topology.
  The $S$-adic topology in turn refines $\Sa E_L$ (not necessarily strictly) by Fact~\ref{fact:intro-old-R-adic}(1), since $S$ is henselian local by Fact~\ref{fact:normalization}.

  Under the identification of $L$ with $K^d$ given by the basis $b_1, \dotsc, b_d$, the subgroup $b_1 R + \dotsb + b_d R \subseteq R' \subseteq L$ is identified with $R^d \subseteq K^d$, which is open in the product topology of $d$ copies of the $R$-adic topology on $K$.
  Since the $R$-adic topology on $K$ coincides with the $\Sa E_K$-topology by Fact~\ref{fact:intro-old-R-adic}, this means that $b_1 R + \dotsb + b_d R$ and hence $R'$ are $\extlk(\Sa E_K)$-open.
  By Fact \ref{fact:extension-system} it follows that $\extlk(\Sa E_K)$ refines the $R'$-adic topology on $L$, which strictly refines $\Sa E_L$.
\end{proof}

\section{A large collection of incomparable topologies on \texorpdfstring{$\Qq_p$}{Qp}}
\label{section:example}
\noindent
Fix a prime $p$.
In this section we produce $2^{2^{\aleph_0}}$-many henselian local subrings $R \subsetneq \Qq_p$ with fraction field $\Qq_p$ such that the corresponding $R$-adic topologies are pairwise incomparable.
This is interesting in light of Corollary \ref{cor:quasiexcellent-henselian}, which shows that this behaviour cannot occur for quasi-excellent $R$, since in this case the $R$-adic topology induces the étale open topology.
It is also in contrast to F.~K.~Schmidt's theorem \cite[Theorem 4.4.1]{EP-value}, which shows that any two henselian valuation rings on a field which is not separably closed induce the same topology (compare also Fact \ref{fact:intro-old-R-adic}(1, 2)).

Our approach is based on \cite[Section 5]{thirdpaper}.
Given a ($\Qq$-)derivation $\der\colon \Qq_p \to \Qq_p$ we let $E_\der$ be $\{ \alpha \in \Zz_p : \der \alpha \in \Zz_p\}$.
It is easy to see that $E_\der$ is a subring of $\Zz_p$.
Fact~\ref{fact:example} is a summary of the statements of \cite[Section 5]{thirdpaper}.

\begin{fact}
\label{fact:example}
If $\der$ is not identically zero then:
\begin{enumerate}
[leftmargin=*]
\item $E_\der$ is a one-dimensional Noetherian henselian local ring with fraction field $\Qq_p$, and
\item the $E_\der$-adic topology on $\Qq_p$ strictly refines the $p$-adic topology. 
\end{enumerate}
Furthermore $\widehat{E_\der}$ is isomorphic to $\Zz_p[X]/(X^2)$, hence $E_\der$ is not excellent.
\end{fact}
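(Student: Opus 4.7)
The plan is to verify the structural properties one by one, using $\Qq$-linearity of $\der$ and the Leibniz rule as the main tools. For $\Frac(E_\der) = \Qq_p$: given $\alpha \in \Qq_p$, pick $N$ so that both $p^N\alpha$ and $p^N\der\alpha = \der(p^N\alpha)$ lie in $\Zz_p$, placing $p^N$ and $p^N\alpha$ in $E_\der$. For locality: $\mfrak := E_\der \cap p\Zz_p$ is a maximal ideal, and any $\alpha \in E_\der \setminus \mfrak$ is a unit in $\Zz_p$ with $\der(1/\alpha) = -\der\alpha/\alpha^2 \in \Zz_p$, hence a unit in $E_\der$. For henselianity: given $f \in E_\der[X]$ and $\alpha \in E_\der$ satisfying Hensel's hypothesis, henselianity of $\Zz_p$ yields a root $\alpha^* \in \Zz_p$ lifting $\alpha$; differentiating $f(\alpha^*) = 0$ gives $\der\alpha^* = -f^\der(\alpha^*)/f'(\alpha^*) \in \Zz_p$ (with $f^\der$ obtained by applying $\der$ to the coefficients), so $\alpha^* \in E_\der$.

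The main technical step is computing the completion via the ring homomorphism
\[ \Phi \colon E_\der \to \Zz_p[X]/(X^2), \qquad \alpha \mapsto \alpha + (\der\alpha)\, X, \]
which is a homomorphism by the Leibniz rule. Using $\Qq$-linearity and nonvanishing of $\der$---start from any $\gamma$ with $\der\gamma \ne 0$, rescale by a power of $p$ to normalize $\der\gamma = 1$, and subtract an element of $\Zz[1/p]$ (whose $\der$ vanishes) to land in $p\Zz_p$---we obtain $t \in E_\der$ with $\der t = 1$ and $t \in p\Zz_p$. By induction I would prove $\mfrak^n = (p^n, p^{n-1}t)\, E_\der$, the non-trivial inclusion using the key observation $t^2/p \in E_\der$ (since $v_p(t^2) \ge 2$ and $\der(t^2) = 2t \in p\Zz_p$). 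Then $\Phi$ induces isomorphisms $E_\der/\mfrak^n \xrightarrow{\sim} \Zz_p[X]/(p^n, p^{n-1}X, X^2)$: any class $a + bX$ on the right is realized by $\Phi(\tilde a + \tilde b\, t)$ for integer lifts $\tilde b \in \Zz$ of $b \bmod p^{n-1}$ and $\tilde a \in \Zz$ of $(a - \tilde b\, t) \bmod p^n$. Passing to the inverse limit gives $\widehat{E_\der} \cong \Zz_p[X]/(X^2)$, from which Noetherianity and one-dimensionality of $E_\der$ follow.

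From the completion we conclude both non-excellence and the strict refinement in (2). Non-excellence: $\Zz_p[X]/(X^2)$ has nonzero nilpotents, so the generic fiber $\widehat{E_\der} \otimes_{E_\der} \Qq_p \cong \Qq_p[X]/(X^2)$ is non-reduced, hence not geometrically regular, ruling out regularity of $E_\der \to \widehat{E_\der}$ and therefore quasi-excellence by Fact~\ref{fact:good-char}(1). Strict refinement of the $p$-adic topology: adapting the construction of $t$ but rescaling by $p^{-m}$ rather than a unit normalization produces $z_m \in \Zz_p$ with $\der z_m = p^{-m}$, and then $p^n z_m \in p^n\Zz_p \setminus E_\der$ for $m > n$ (since $\der(p^n z_m) = p^{n-m}$ has negative $p$-adic valuation). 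I expect the main obstacle to be the identification $\mfrak^n = (p^n, p^{n-1}t)$: the forward inclusion is straightforward, but the reverse requires carefully peeling off the $t$-component by integer approximation of $\der\alpha / p^{n-1} \bmod p$ and then verifying that the remainder divides cleanly by $p^n$ within $E_\der$.
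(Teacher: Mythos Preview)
The paper does not prove this statement; it simply records it as ``a summary of the statements of \cite[Section 5]{thirdpaper}''. Your self-contained proof is therefore not comparable to anything in the present paper, but the approach you outline is essentially the natural one and matches the spirit of the cited reference.

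Two issues deserve comment. First, a minor slip: rescaling $\gamma$ by a power of $p$ does \emph{not} normalize $\der\gamma$ to $1$, since $\der(p^k\gamma) = p^k\der\gamma$ only adjusts the $p$-adic valuation. You obtain $t \in p\Zz_p$ with $\der t = u \in \Zz_p^\times$, not $\der t = 1$. This is harmless: throughout your computation of $\mfrak^n$ and of the induced maps $E_\der/\mfrak^n \to \Zz_p[X]/(p^n,p^{n-1}X,X^2)$, simply choose the integer lift $\tilde b$ so that $\tilde b\, u \equiv b \pmod{p^{n-1}}$ instead of $\tilde b \equiv b$.

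Second, and more substantively, the sentence ``from which Noetherianity \dots\ of $E_\der$ follow[s]'' hides a genuine gap. Knowing $\widehat{E_\der}\cong\Zz_p[X]/(X^2)$ does \emph{not} by itself yield that $E_\der$ is Noetherian: the standard transfer theorems (faithful flatness of completion, or Noetherianity from a Noetherian associated graded) presuppose either Noetherianity or completeness of the base. What does work is the direct argument your computation of $\mfrak^n=(p^n,p^{n-1}t)$ already sets up: for any nonzero $\alpha\in E_\der$, a short valuation estimate shows $p^n/\alpha$ and $p^{n-1}t/\alpha$ lie in $E_\der$ once $n$ is large relative to $v_p(\alpha)$ and $v_p(\der\alpha)$, so $(\alpha)\supseteq\mfrak^n$. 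Since $E_\der/\mfrak^n$ is finite, every nonzero ideal is finitely generated, and $E_\der$ is Noetherian. One-dimensionality then follows from $\dim E_\der = \dim\widehat{E_\der} = 1$. With these two fixes your argument goes through.
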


\noindent
Let $\Sa D$ be the set of derivations $\Qq_p \to \Qq_p$ which are not constant zero.
We say that $\der,\der^* \in \Sa D$ are constant multiples of each other if $\lambda \der = \der^*$ for some $\lambda \in \Qq_p$.
We prove:

\begin{theorem}
\label{thm:example}
If $\der,\der^* \in \Sa D$ are not constant multiples of each other, then the $E_\der$-adic topology does not refine the $E_{\der^*}$-adic topology and vice versa.
There is $I \subseteq \Sa D$ such that $|I| = 2^{2^{\aleph_0}}$ and if $\der,\der^* \in I$, $\der \ne \der^*$ then the $E_\der$-adic topology does not refine the $E_{\der^*}$-adic topology.
\end{theorem}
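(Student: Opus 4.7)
The plan is to use Lemma~\ref{lem:adic-refinement}: the $E_\der$-adic topology refines the $E_{\der^*}$-adic topology if and only if $\alpha E_\der \subseteq E_{\der^*}$ for some $\alpha \in \Qq_p^\times$. Suppose such an $\alpha$ exists. Since $1 \in E_\der$ (because $\der 1 = 0$), we obtain $\alpha \in \alpha E_\der \subseteq E_{\der^*}$, so $\alpha \in \Zz_p$ and $\der^*\alpha \in \Zz_p$. For every $\beta \in E_\der$, the Leibniz rule gives
\[ \alpha \der^*\beta + \beta \der^*\alpha = \der^*(\alpha\beta) \in \Zz_p; \]
since $\beta \in \Zz_p$ and $\der^*\alpha \in \Zz_p$, we deduce $\alpha \der^*\beta \in \Zz_p$, and hence $\der^*(E_\der) \subseteq \alpha^{-1}\Zz_p$ is $p$-adically bounded in $\Qq_p$. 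The first statement of the theorem thus reduces to showing that if $\der$ and $\der^*$ are not constant multiples of each other, then $\der^*(E_\der)$ is unbounded in $\Qq_p$ (the ``vice versa'' then follows by symmetry in $\der, \der^*$).

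For this unboundedness I would study the $\Qq$-linear map $\Psi\colon \Qq_p \to \Qq_p^3$, $\beta \mapsto (\beta, \der\beta, \der^*\beta)$. Let $W = \Psi(\Qq_p)$ and let $\overline{W}$ be the closure of $W$ in the product $p$-adic topology on $\Qq_p^3$. Then $\overline{W}$ is a closed $\Qq$-subspace, and since $\Qq$ is $p$-adically dense in $\Qq_p$ and scalar multiplication on $\Qq_p^3$ is continuous, a standard limit argument upgrades $\overline{W}$ to a $\Qq_p$-subspace of $\Qq_p^3$. It is nonzero as it contains $\Psi(1) = (1, 0, 0)$. If $\dim_{\Qq_p} \overline{W} \leq 2$, there is a nonzero linear form $(\alpha_0, \alpha_1, \alpha_2) \in \Qq_p^3$ vanishing on $\overline{W}$; evaluating at $(1, 0, 0)$ forces $\alpha_0 = 0$, so $\alpha_1 \der + \alpha_2 \der^* = 0$ as $\Qq_p$-linear combinations of derivations with $(\alpha_1, \alpha_2) \neq (0, 0)$. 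Since $\der \neq 0$, necessarily $\alpha_2 \neq 0$, yielding $\der^* = -(\alpha_1/\alpha_2)\der$, contrary to assumption. Hence $\overline{W} = \Qq_p^3$, and for any $N \geq 1$ we can approximate $(0, 0, p^{-N})$ by some $\Psi(\beta)$ to sufficiently high precision to ensure $\beta, \der\beta \in \Zz_p$ and $v_p(\der^*\beta) = -N$, establishing unboundedness.

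For the second assertion I count derivations up to $\Qq_p$-proportionality. Since $\Qq_p/\Qq$ is separable of transcendence degree $2^{\aleph_0}$, $\Omega^1_{\Qq_p/\Qq}$ is a $\Qq_p$-vector space of dimension $2^{\aleph_0}$, so $\mathrm{Der}_\Qq(\Qq_p, \Qq_p) = \mathrm{Hom}_{\Qq_p}(\Omega^1_{\Qq_p/\Qq}, \Qq_p)$ has cardinality $|\Qq_p|^{2^{\aleph_0}} = 2^{2^{\aleph_0}}$. Each $\Qq_p$-line through the origin contains $|\Qq_p| = 2^{\aleph_0}$ elements, so there are $2^{2^{\aleph_0}}$ such lines; selecting one nonzero representative from each yields the required $I \subseteq \Sa D$ of size $2^{2^{\aleph_0}}$ whose elements are pairwise non-proportional, and the first part then yields pairwise incomparability. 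The main obstacle throughout is the density claim $\overline{W} = \Qq_p^3$: naive constructions of $\beta$ as a $\Qq$-linear combination of finitely many elements $\gamma_i$ typically confine $\Psi(\beta)$ to a proper $\Qq_p$-subspace, and the decisive observation is that passing to the $p$-adic closure promotes the $\Qq$-span to a $\Qq_p$-subspace, after which the hypothesis of linear independence of $\der, \der^*$ as derivations becomes directly usable.
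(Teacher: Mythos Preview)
Your proof is correct and follows essentially the same strategy as the paper: both reduce via Lemma~\ref{lem:adic-refinement} to the density of $\{(\beta,\der\beta,\der^*\beta):\beta\in\Qq_p\}$ in $\Qq_p^3$ (the paper's Lemma~\ref{lem:der}), and both handle the cardinality count identically. The only methodological difference is in how density is established. The paper picks $s,t\in\Qq_p$ with $(\der s,\der t)$ and $(\der^* s,\der^* t)$ linearly independent and writes down an explicit invertible $\Qq_p$-linear map $T\colon\Qq_p^3\to\Qq_p^3$ carrying $\Qq^3$ into the graph, whereas you argue abstractly that the closure of a $\Qq$-subspace of $\Qq_p^3$ is automatically a $\Qq_p$-subspace, and then rule out a proper subspace by producing a forbidden linear relation $\alpha_1\der+\alpha_2\der^*=0$. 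Your argument is slightly slicker and avoids choosing $s,t$; the paper's is more hands-on. Your intermediate reduction (if $\alpha E_\der\subseteq E_{\der^*}$ then $\der^*(E_\der)\subseteq\alpha^{-1}\Zz_p$) is also a clean repackaging of what the paper does directly by exhibiting, for each $a$, a point in an explicit open set $U\subseteq\Qq_p^3$.
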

\noindent
Thus there are $2^{2^{\aleph_0}}$-distinct $E_\der$-adic topologies on $\Qq_p$.
We explain how the second claim follows from the first.
Let $B$ be a transcendence basis for $\Qq_p$.
By the usual rules for extending derivations to separable field extensions \cite[Section 2.8]{field-arithmetic}, it is easy to see that any function $B \to \Qq_p$ uniquely extends to a derivation $\Qq_p \to \Qq_p$.
Since $\lvert B \rvert = 2^{\aleph_0}$, this shows $\lvert\Sa D\rvert = 2^{2^{\aleph_0}}$.
As every element of $\Sa D$ is a constant multiple of precisely $\lvert \Qq_p^\times\rvert = 2^{\aleph_0}$ other elements of $\Sa D$, this shows that there are $2^{2^{\aleph_0}}$ classes of elements of $\Sa D$ under the equivalence relation of being a constant multiple of one another, and we may take $I \subseteq \Sa D$ to be a set of representatives for this equivalence relation.

\begin{lemma}
\label{lem:der}
Suppose that $\der, \der^* \colon \Qq_p \to \Qq_p$ are derivations and neither is a constant multiple of the other.
Then $\{ (\alpha, \der \alpha, \der^* \alpha) : \alpha \in \Qq_p\}$ is $p$-adically dense in $\Qq^3_p$.
\end{lemma}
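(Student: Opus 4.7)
The plan is to reduce the three-dimensional density question to a two-dimensional one, and then use the fact that rational numbers are $p$-adically dense in $\Qq_p$ and annihilated by every derivation. Concretely, given a target $(\beta_0,\beta_1,\beta_2)\in\Qq_p^3$ and $\epsilon>0$, I would first find $\alpha_0\in\Qq_p$ with $\der\alpha_0$ and $\der^*\alpha_0$ within $\epsilon$ of $\beta_1$ and $\beta_2$, then pick $q\in\Qq$ with $|q-(\beta_0-\alpha_0)|_p<\epsilon$, and set $\alpha:=\alpha_0+q$. Since $\der q=\der^* q=0$, the derivations of $\alpha$ are unchanged, while the first coordinate is now close to $\beta_0$. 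Thus the lemma reduces to the statement that
\[ V \;:=\; \{(\der\alpha,\der^*\alpha) : \alpha\in\Qq_p\} \]
is $p$-adically dense in $\Qq_p^2$.

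Since $\der$ and $\der^*$ are $\Qq$-linear, $V$ is a $\Qq$-linear subspace of $\Qq_p^2$. The key observation is that the $p$-adic closure $\bar V$ of any $\Qq$-subspace of $\Qq_p^n$ is automatically a $\Qq_p$-subspace: given $v\in\bar V$ and $c\in\Qq_p$, approximate $v$ by a sequence $v_k\in V$ and $c$ by rationals $q_k\in\Qq$, then $q_k v_k\in V$ converges to $cv$. Finite-dimensional $\Qq_p$-subspaces of $\Qq_p^2$ are closed, so $\bar V$ is simply a $\Qq_p$-linear subspace, of dimension $0$, $1$, or $2$. To conclude $\bar V=\Qq_p^2$, it therefore suffices to exhibit two $\Qq_p$-linearly independent elements of $V$.

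This is where the non-proportionality hypothesis enters. If no such pair existed, then for all $\alpha,\beta\in\Qq_p$ the determinant
\[ \det\begin{pmatrix}\der\alpha & \der\beta\\ \der^*\alpha & \der^*\beta\end{pmatrix} = 0. \]
Fixing $\alpha_0$ with $\der\alpha_0\neq 0$ (which exists because $\der\neq 0$) and setting $\lambda:=\der^*\alpha_0/\der\alpha_0\in\Qq_p$, this yields $\der^*\beta=\lambda\,\der\beta$ for every $\beta\in\Qq_p$, i.e.\ $\der^*=\lambda\der$, contradicting the hypothesis. Hence $V$ contains two $\Qq_p$-independent vectors, $\bar V=\Qq_p^2$, and the lemma follows.

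The main conceptual point is the scalar-multiplication argument that upgrades a $\Qq$-subspace closure to a $\Qq_p$-subspace; once this is in hand, the rest is elementary linear algebra together with the trivial observation that derivations vanish on $\Qq$. There is no delicate analytic or arithmetic estimation required.
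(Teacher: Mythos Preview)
Your proof is correct and differs from the paper's argument in presentation, though the underlying linear algebra is the same.

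The paper proceeds more concretely: it directly finds $s,t\in\Qq_p$ such that the vectors $(\der s,\der t)$ and $(\der^* s,\der^* t)$ are not scalar multiples (equivalently, $(\der s,\der^* s)$ and $(\der t,\der^* t)$ are $\Qq_p$-independent, which is exactly your Step~4), and then writes down an explicit invertible $\Qq_p$-linear map $T\colon\Qq_p^3\to\Qq_p^3$ sending $(a,b,c)$ to $(a+sb+tc,\,b\der s+c\der t,\,b\der^* s+c\der^* t)$. Since $T(\Qq^3)$ lands in the graph set and $\Qq^3$ is dense in $\Qq_p^3$, the image is dense. Your version instead first shaves off the first coordinate by adding a rational, reducing to density of $V=\{(\der\alpha,\der^*\alpha)\}$ in $\Qq_p^2$, and then invokes the clean general fact that the $p$-adic closure of a $\Qq$-subspace is automatically a $\Qq_p$-subspace. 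The paper's route has the advantage of being entirely explicit (one matrix, one determinant), while yours isolates a reusable topological principle and avoids setting up the $3\times 3$ map. Both arguments ultimately rest on the same two observations: derivations kill $\Qq$, and non-proportionality of $\der,\der^*$ forces the existence of two $\Qq_p$-independent vectors in the image.
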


\begin{proof}
As $\der,\der^*$ are not constant multiples of each other there are $s,t \in \Qq_p$ such that $(\der s, \der t)$ and $(\der^* s, \der^* t)$ are not scalar multiples of each other in $\Qq^2_p$.
Any derivation $\Qq_p \to \Qq_p$ is $\Qq$-linear and vanishes on $\Qq$, hence for $\alpha = a + sb + tc$ with $a, b, c \in \Qq$ we have
\begin{align*}
(\alpha,\der \alpha, \der^* \alpha) = (a + sb + tc,  b \der s + c \der t, b\der^* s + c \der^* t).
\end{align*}
We let $T \colon \Qq^3_p \to \Qq^3_p$ be the $\Qq$-linear transformation given as follows:
\[
T\begin{pmatrix} x \\ y \\z \end{pmatrix} = \begin{pmatrix} x + sy + tz \\  (\der s)y + (\der t)z \\ (\der^* s)y + (\der^* t)z \end{pmatrix}
\]
Note that $T(\Qq^3) \subseteq \{ (\alpha, \der \alpha, \der^* \alpha) : \alpha \in \Qq_p\}$, so it is enough to show that $T(\Qq^3)$ is dense in $\Qq^3_p$.
As $\Qq^3$ is dense in $\Qq^3_p$ and $T$ is linear it is sufficient to note that $T$ is invertible since
\[
\det(T) = \det \begin{pmatrix} 1 & s & t \\ 0 & \der s & \der t \\ 0 & \der^* s & \der^* t \end{pmatrix} = \det \begin{pmatrix}   \der s & \der t \\  \der^* s & \der^* t \end{pmatrix} \ne 0. \qedhere
\]
\end{proof}

\begin{proof}[Proof of Theorem~\ref{thm:example}]
Suppose $\der, \der^*$ are not constant multiples of each other.
We show that the $E_{\der^*}$-adic topology does not refine the $E_{\der}$-adic topology.
By Lemma~\ref{lem:adic-refinement} is enough to show that $a E_{\der^*} \nsubseteq E_{\der}$ for any $a \in \Qq^\times_p$.
Let 
\[U = \{(b,b',b'') \in \Zz_p \times \Qq_p \times \Zz_p : ab' + (\der a)b \in \Qq_p\setminus \Zz_p\}.\]
Then $U$ is open and nonempty as $(0,(pa)^{-1},0) \in U$.
By Lemma~\ref{lem:der} we have $(b, \der b, \der^* b) \in U$ for some $b \in \Qq_p$.
Then $b, \der^* b \in \Zz_p$, so $b \in E_{\der^*}$, and $\der(ab) = a (\der b) + b(\der a)  \notin \Zz_p$, so $ab \notin E_{\der}$.
\end{proof}

\section{The étale open topology on pseudo-algebraically closed fields}
\label{section:PAC}
\noindent
Recall that a field $K$ is \emph{pseudo-algebraically closed} (PAC) if every geometrically integral $K$-variety has a $K$-point.

\begin{proposition}\label{prop:pac}
  Let $K$ be a PAC field.
  Then the étale open topology on varieties over $K$ is not induced by a field topology on $K$.
\end{proposition}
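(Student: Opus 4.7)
The plan is to argue by contradiction. Suppose $\uptau$ is a field topology on $K$ such that $\uptau$ induces the $\seak$-topology on every $K$-variety. First I would dispose of the case that $K$ is separably closed: then $\seak$ on $\Aa^1(K) = K$ coincides with the Zariski topology, which is not Hausdorff on an infinite field and hence not a field topology under our conventions. So we may assume $K$ is not separably closed, and in particular the absolute Galois group of $K$ is non-trivial.

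The key consequence of the contradictory hypothesis is that the $\seak$-topology on $\Aa^2(K) = K^2$ must coincide with the product topology $\uptau \times \uptau$. Equivalently, every $\seak$-open subset of $K^2$ is a union of ``rectangles'' $U_1 \times U_2$ with $U_1, U_2 \subseteq K$ non-empty $\seak$-open. To derive a contradiction, I would construct an $\seak$-open $W \subseteq K^2$ containing $(0,0)$ but containing no such rectangle around $(0,0)$.

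For the construction, using that $K$ is not separably closed, one produces from a suitable Kummer or Artin--Schreier cover a proper étale open $S \subseteq K$ (taking $S = K^{\times \ell}$ for a prime $\ell$ with $K^{\times \ell} \ne K^\times$, coprime to $\Chara K$; in positive characteristic $\ell$ one uses the Artin--Schreier image $\wp(K)$ instead). Then take $W$ to be the étale image arising from the cover $z^\ell = 1 + xy$ restricted to the étale locus $\{z \ne 0\}$; concretely $W = \{(x, y) \in K^2 : 1 + xy \in S\}$. Since $1 \in S$, one has $(0, 0) \in W$, and $W$ is $\seak$-open.

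The main obstacle—and the step where the PAC hypothesis is essential—is to show that no rectangle $U_1 \times U_2 \ni (0,0)$ with nonempty $\seak$-opens $U_1, U_2$ fits inside $W$. Unpacking: such a rectangle would force $1 + U_1^\times \cdot U_2^\times \subseteq S$ where $U_i^\times := U_i \setminus \{0\}$, and hence $U_1^\times \cdot U_2^\times \subseteq S - 1$; in particular the product set would miss $-1$. The PAC hypothesis enters via a ``density of products'' property: for any non-empty $\seak$-opens $A, B \subseteq K^\times$, every element of $K^\times$ has the form $ab$ with $a \in A$, $b \in B$. I would establish this by showing that for each $t \in K^\times$, the $K$-variety cut out by the multiplication equation $ab = t$ together with the étale-open conditions defining $A$ and $B$ admits a geometrically integral component defined over $K$, whence by PAC it has a $K$-point. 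This density would force $-1 \in U_1^\times \cdot U_2^\times$, a contradiction. Making precise the ``geometrically integral component defined over $K$'' step—likely by choosing a presentation of $U_1, U_2$ via geometrically integral covers and carefully analyzing the fiber product—is the technically most delicate aspect of the argument.
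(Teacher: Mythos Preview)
Your ``density of products'' claim is false as stated. Take $A = B = K^{\times \ell}$ for your chosen prime $\ell$ with $K^{\times \ell} \neq K^\times$: these are nonempty \'etale opens in $K^\times$ (images of the \'etale self-map $x \mapsto x^\ell$ of $\Gg_m$), yet $A \cdot B = K^{\times \ell} \neq K^\times$. So the step where you want to force $-1 \in U_1^\times \cdot U_2^\times$ does not go through. The underlying obstruction is that $\Gg_m$ is not geometrically simply connected: the $\ell$-th power map is a nontrivial connected \'etale cover, and when you try to show that the fibre over $t$ of the multiplication map restricted to your \'etale covers has a geometrically integral component defined over $K$, it need not---the fibre can split into $\ell$ geometric components permuted by Galois.

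The paper's proof avoids this by working instead with the transitive action of $\operatorname{PGL}_2$ on $\Pp^1$, which \emph{is} geometrically simply connected. Given nonempty \'etale opens $X(K) \subseteq \operatorname{PGL}_2(K)$ and $Y(K) \subseteq \Pp^1(K)$ (with $X$, $Y$ geometrically integral), the paper shows via a ramification argument that the composite $X \times Y \to \operatorname{PGL}_2 \times \Pp^1 \to \Pp^1$ has geometrically integral generic fibre; simple connectedness of $\Pp^1$ is exactly what makes the relative algebraic closure of $K(\Pp^1)$ in $K(X \times Y)$ trivial. Spreading out, all but finitely many fibres are geometrically integral, hence by PAC have $K$-points, so any nonempty $\uptau$-open in $\Pp^1(K)$ is cofinite and $\uptau$ is the cofinite topology---not a field topology. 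The paper explicitly remarks that in characteristic zero one could use the addition action on $\Aa^1$ instead, but in positive characteristic $\Aa^1$ has Artin--Schreier covers and one is forced to $\Pp^1$; your multiplication action on $\Gg_m$ fails in every characteristic for the same reason. Your overall shape (exhibit an \'etale open in $K^2$ containing no rectangle) might be salvageable with a different target, but the argument as written has a genuine gap at the density step.
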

\begin{proof}
  Suppose for a contradiction that there is a field topology $\uptau$ on $K$ inducing $\Sa E_K$.
  We consider the morphism $\alpha \colon \operatorname{PGL}_{2,K} \times \Pp^1_K \to \Pp^1_K$ given by the natural group action, as well as the projection morphisms $\pi_1 \colon \operatorname{PGL}_{2,K} \times \Pp^1_K \to \operatorname{PGL}_{2,K}$ and $\pi_2 \colon \operatorname{PGL}_{2,K} \times \Pp^1_K \to \Pp^1_K$.
  For later use we observe that the morphism $(\pi_1, \alpha) \colon \operatorname{PGL}_{2,K} \times \Pp^1_K \to \operatorname{PGL}_{2,K} \times \Pp^1_K$ is an isomorphism, since it has an obvious inverse given by acting with the inverse group element.
  In particular, the morphism $\alpha = \pi_2 \circ (\pi_1, \alpha)$ is smooth since $\pi_2$ is smooth (as it is a base change of the smooth morphism $\operatorname{PGL}_{2,K} \to \operatorname{Spec} K$).
  
  The étale open topology on $\operatorname{PGL}_{2,K}(K) \times \Pp^1_K(K)$ is the product topology of the étale open topologies on $\operatorname{PGL}_{2,K}(K)$ and $\Pp^1_K(K)$, since the analogous statement is true for the $\uptau$-topology and the two topologies agree on the $K$-points of every variety.
  Let $\emptyset \neq U \subseteq \Pp^1(K)$ be open.
  We show that $U$ is necessarily cofinite.

  The group scheme action $\alpha$ induces a map $\operatorname{PGL}_{2,K}(K) \times \Pp^1(K) \to \Pp^1(K)$ on $K$-points, which we also denote by $\alpha$.
  It is continuous by the defining properties of the étale open topology, and so there exist non-empty étale open subsets of $\operatorname{PGL}_{2,K}(K)$ and $\Pp^1(K)$ whose product is contained in the preimage of $U$ under $\alpha$.
  In other words, there exist two $K$-varieties $X$ and $Y$ with étale maps $X \to \operatorname{PGL}_{2,K}$, $Y \to \Pp^1_K$ such that $X(K), Y(K) \neq \emptyset$ and $U$ contains the image of $X(K) \times Y(K)$ under the composite
  \[ g \colon X \times Y \to \operatorname{PGL}_{2,K} \times \Pp^1_K \overset{\alpha}{\to} \Pp^1_K.\]
  The $K$-schemes $X$ and $Y$ are smooth since they are étale over the smooth $K$-schemes $\operatorname{PGL}_{2,K}$ and $\Pp^1_K$, respectively.
  Passing to a connected component of $X$ and $Y$ if necessary, we may additionally assume that both $X$ and $Y$ are connected (as schemes, i.e.\ not in relation to the topologies $\Sa E_K$ or $\uptau$).
  Since they both have a $K$-point, $X$ and $Y$ are then geometrically connected \cite[Proposition 2.3.24]{poonen-qpoints} and hence (by smoothness) geometrically integral \cite[Proposition 3.5.67]{poonen-qpoints}.

  We claim that the generic fibre of $g$ is geometrically integral (as a variety over the function field $K(\Pp^1_K)$), i.e.\ that the function field $K(X \times Y)$ is a regular extension of the function field $K(\Pp^1_K)$ via the map $g$.
  Let us defer the proof of this claim for the moment.
  By \cite[Tags 0578 and 0559]{stacks-project}, all but finitely many fibres of $g$ are geometrically integral.
  In particular, for all but finitely many $x \in \Pp^1(K)$, the $K$-variety $g^{-1}(x)$ has a $K$-point by the PAC property, and thus $x \in g(X(K) \times Y(K)) \subseteq U$.

  This shows that $U$ is cofinite.
  Thus the étale open topology on $K = \Aa^1_K(K) \subseteq \Pp^1_K(K)$, and therefore the topology $\uptau$, is the cofinite topology.
  Since the cofinite topology is not a field topology on any infinite field, this yields the desired contradiction.

  It remains to prove the claim.
  This is purely a matter of algebraic geometry, so the topologies $\Sa E_K$ and $\uptau$ no longer intervene.
  As a consequence of Zariski's Main Theorem, we can embed $X$ and $Y$ as open subschemes of normal integral schemes $\overline X$, $\overline Y$ with finite morphisms $p_1 \colon \overline X \to \operatorname{PGL}_{2,K}$, $p_2 \colon \overline Y \to \Pp^1_K$ extending the étale morphisms from $X$ respectively $Y$.
  (See for instance \cite[Theorem 3.5.52 (c)]{poonen-qpoints} (recalling that $X$ and $Y$ are separated by our convention on varieties), where $\overline X$ and $\overline Y$ are described concretely as normalisations of $\operatorname{PGL}_{2,K}$ (respectively $\Pp^1_K$) in the function field of $X$ (respectively $Y$).)

  Via the dominant morphism $\overline X \times \overline Y \overset{ p_1 \times p_2}{\longrightarrow} \operatorname{PGL}_{2,K} \times \Pp^1_K \overset{\alpha}{\to} \Pp^1_K$, which restricts to the morphism $g$ considered earlier on $X \times Y$, we can consider $K(X \times Y) = K(\overline X \times \overline Y)$ as an extension field of $K(\Pp^1_K)$.
  Let $F \subseteq K(X \times Y)$ be the relative algebraic closure of $K(\Pp^1_K)$ therein.
  Then $F/K$ is regular since $K(X \times Y)/K$ is regular, due to the geometric integrality of $X$ and $Y$.
  Let $C \to \Pp^1_K$ be the normalisation of $\Pp^1_K$ in $F$.
  Thus $C/K$ is a geometrically integral normal projective curve and $C \to \Pp^1_K$ is a finite morphism.
  We shall show using a ramification argument that in fact $C \to \Pp^1_K$ is an isomorphism.

  Let us consider the following diagram:
  \[ \xymatrix{
      \overline{X} \times \overline{Y} \ar^{p_1 \times p_2}[r] \ar@{-->}[d] & \operatorname{PGL}_{2,K} \times \Pp^1_K \ar^{(\pi_1, \alpha)}[d] \\
      \operatorname{PGL}_{2,K} \times C \ar[r] & \operatorname{PGL}_{2,K} \times \Pp^1_K
  }\]
All varieties occurring are geometrically integral and normal, the vertical morphism on the right is an isomorphism, the top horizontal morphism is finite and generically étale, and the bottom morphism (given by the identity on $\operatorname{PGL}_{2,K}$ and the previous map $C \to \Pp^1_K$) is finite.

  We can complete the diagram by a finite morphism on the left side, shown as a dashed arrow:
  Observe first that by construction, the function field $K(\overline X \times \overline Y)$ is an extension of the function field of $\operatorname{PGL}_{2,K} \times C$, i.e.~we can find a rational function on the left side making the diagram commute.
  In particular, we then have a normalisation of $\operatorname{PGL}_{2,K} \times C$ in the function field $K(\overline X \times \overline Y)$ (see for instance \cite[Definition 4.1.24]{Liu_AlgGeomAndArithCurves}), which is also a normalization of $\operatorname{PGL}_{2,K} \times \Pp^1_K$ in this field by construction.
  However, the morphism $(\pi_1, \alpha) \circ (p_1 \times p_2)$ already describes $\overline X \times \overline Y$ as the normalisation of $\operatorname{PGL}_{2,K} \times \Pp^1_K$ within $K(\overline X \times \overline Y)$;
  therefore, by uniqueness of normalisations, $\overline X \times \overline Y$ must already be the normalisation of $\operatorname{PGL}_{2,K} \times C$ in $K(\overline X \times \overline Y)$, and the morphism on the left side of the diagram making it commutative is none other but the normalisation morphism.

  Let us show that the morphism of curves $C \to \Pp^1_K$ is unramified.
  First observe that the only prime divisors of $\operatorname{PGL}_{2,K} \times \Pp^1_K$ which ramify under the map $p_1 \times p_2$ are of the form $D \times \Pp^1_K$ or $\operatorname{PGL}_{2,K} \times D'$, where $D$ ramifies under $p_1$ or $D'$ ramifies under $p_2$.
  Since $\alpha$ is a transitive group action, the image of such a prime divisor under the automorphism $(\pi_1, \alpha)$ of $\operatorname{PGL}_{2,K} \times \Pp^1_K$ is never of the form $\operatorname{PGL}_{2,K} \times \{ x \}$ for a closed point (i.e., prime divisor) $x$ of $\Pp^1_K$.
  In other words, for every closed point $x$ of $\Pp^1_K$, the prime divisor $\operatorname{PGL}_{2,K} \times \{ x \}$ does not ramify along the map $(\pi_1, \alpha) \circ (p_1 \times p_2) \colon \overline X \to \overline Y \to \operatorname{PGL}_{2,K} \times \Pp^1_K$.
  Due to the commutative diagram above, it follows that the prime divisor in question cannot ramify along $\operatorname{PGL}_{2,K} \times C \to \operatorname{PGL}_{2,K} \times \Pp^1_K$ either, and so $x$ is not a branch point of $C \times \Pp^1_K$.
  Since $x$ was arbitrary, this shows that $C \to \Pp^1_K$ is unramified.
  Since $C$ is a geometrically integral projective curve and $\Pp^1_K$ is geometrically simply connected (see \cite[Corollary 7.4.20]{Liu_AlgGeomAndArithCurves}), it follows that the map $C \to \Pp^1_K$ is an isomorphism, and so $F = K(C) = K(\Pp^1_K)$.
  In other words, the field $K(\Pp^1_K)$ is relatively algebraically closed in $K(X \times Y)$.
  
  Finally, the morphism $g$ is smooth,
  since it factors as the composition of the étale morphism $X \times Y \to \operatorname{PGL}_{2,K} \times \Pp^1_K$, and the smooth morphism $\alpha$.
  Smoothness of $g$ at the generic point means that $K(X \times Y)/K(\Pp^1_K)$ is a separable field extension, so (together with relative algebraic closedness) we have shown that it is a regular field extension.
  This finishes the proof of the claim that the generic fibre of $g$ is geometrically integral.
\end{proof}

\begin{remark}
  The precise choice of the morphism $\operatorname{PGL}_{2,K} \times \Pp^1_K \to \Pp^1_K$ in the proof above is not very important.
  We only used that it is a transitive group action on a geometrically simply connected variety.
  In characteristic zero, one can instead use the simpler addition action $\Aa^1_K \times \Aa^1_K \to \Aa^1_K$, but in positive characteristic $\Aa^1_K$ is not geometrically simply connected.
\end{remark}

\section{gt-henselian field topologies}
\label{section:t-henselian}

\subsection{Background on topological fields}
\label{section:field topology background}
We develop the basics of a theory of gt-henselian field topologies extending the Prestel-Ziegler theory of t-henselian field topologies.
Recall our convention that all field topologies are Hausdorff and non-discrete.
Throughout, we fix such a field topology $\uptau$ on the field $K$.

\begin{definition}\label{def:gt-henselian}
We say that $\uptau$ is \textbf{generalized (topologically) henselian}, for short \textbf{gt-henselian}, if for every $n$ and every neighbourhood $P \subseteq K$ of $-1$ there is a neighbourhood $O \subseteq K$ of zero such that the polynomial $X^{n + 1} + X^n + a_{n - 1} X^{n - 1} + \ldots + a_1 X + a_0$ has a root in $P$ for any $a_0,\ldots,a_{n - 1} \in O$.
\end{definition}

\noindent
As the terminology suggests, gt-henselianity generalizes t-henselianity. For more on t-henselianity, see~\cite[Section 7]{Prestel1978}.
\begin{remark}\label{rem:gt-henselian-literature}
  In fact, the field topology $\uptau$ is gt-henselian if and only if $K$ is $\uptau$-henselian in the sense considered in \cite[Examples 1.7]{Pop-little}, as follows from the characterization we give in Proposition \ref{prop:general}(\ref{item:3}) below.
  However, Pop's notion of $\uptau$-henselian rings does not seem to have been studied in any depth in the literature.
  We prefer the name gt-henselianity to stress the link with \cite{Prestel1978}.

  Another notion of henselianity for rings in the literature is given by the henselian semi-normed rings of \cite{FP-Galois} (on which Pop's definition of weak $\uptau$-henselianity is modelled), but there do not appear to be interesting examples of field topologies obtained in this way, except in the well-known case of a field with an absolute value.
\end{remark}

\noindent
Recall from \cite[Theorem 7.2 a)]{Prestel1978} (which we may as well take as a definition) that the field topology $\uptau$ on $K$ is \emph{t-henselian} if and only if it is a $V$-topology (see \cite[Section 3]{Prestel1978}) and for every $n \geq 1$ there is a $\uptau$-neighbourhood $U$ of $0$ such that any polynomial $f = X^n + X^{n-1} + a_{n-2} X^{n-2} + \dotsb + a_1 X + a_0 \in K[X]$ with $a_{n-2}, \dotsc, a_0 \in U$ has a zero in $K$.
We show that gt-henselianity generalises t-henselianity.
\begin{proposition}
  \label{prop:V-t}
  The topology $\uptau$ is $t$-henselian if and only if it is $gt$-henselian and a V-topology.
\end{proposition}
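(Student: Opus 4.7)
The easier direction, that a gt-henselian V-topology is t-henselian, follows directly from the definitions. Since $K$ is itself a $\uptau$-open neighbourhood of $-1$, applying gt-henselianity for each $n \geq 0$ with $P = K$ produces a neighbourhood $O$ of $0$ such that every polynomial $X^{n+1} + X^n + a_{n-1}X^{n-1} + \ldots + a_0$ with $a_0, \ldots, a_{n-1} \in O$ has a zero in $K$. Reindexing by $m = n + 1$, this is the root-existence clause in the recalled definition of t-henselianity.

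For the converse, suppose $\uptau$ is t-henselian. Since the V-topology property is part of the hypothesis, we only need to verify gt-henselianity. The plan is to invoke a stronger Hensel-type lifting property which, for V-topologies, is equivalent to the bare root-existence condition used to define t-henselianity; this is part of the Prestel-Ziegler theory developed in \cite[Section 7]{Prestel1978}. Precisely, we shall use the following consequence: given a monic $f \in K[X]$ and $a \in K$ with $f'(a) \neq 0$, and given any $\uptau$-neighbourhood $Q$ of $0$, there exists a neighbourhood $R$ of $0$ such that $f(a) \in R$ implies the existence of $\alpha \in K$ with $f(\alpha) = 0$ and $\alpha - a \in Q$.

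Applying this lifting to $f(X) = X^{n+1} + X^n + a_{n-1}X^{n-1} + \ldots + a_0$ at the approximate root $a = -1$, we compute
\[ f(-1) = \sum_{i=0}^{n-1}(-1)^i a_i, \qquad f'(-1) = (-1)^n + \sum_{i=1}^{n-1} i(-1)^{i-1} a_i. \]
For the $a_i$ in a sufficiently small $\uptau$-neighbourhood of $0$, $f(-1)$ lies in any preassigned neighbourhood of $0$ while $f'(-1)$ stays close to the nonzero element $(-1)^n$, in particular remaining nonzero. Given a neighbourhood $P$ of $-1$, set $Q := P + 1$, a neighbourhood of $0$; the Hensel-type lifting then produces a neighbourhood $O$ of $0$ such that for $a_0, \ldots, a_{n-1} \in O$ there is $\alpha \in K$ with $f(\alpha) = 0$ and $\alpha + 1 \in Q$, i.e.\ $\alpha \in P$. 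This is gt-henselianity.

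The main obstacle is justifying the Hensel-type characterization of t-henselian V-topologies used above, which is significantly stronger than the bare root-existence condition of the definition: it is the core compactness input of the Prestel-Ziegler theory, which one must invoke rather than reprove from the definition alone. An alternative route avoiding this direct citation is to pass to a sufficiently saturated elementary extension $(K^*, \uptau^*)$ in which $\uptau^*$ is induced by a henselian valuation $v^*$, apply classical Hensel's lemma to $f$ in $(K^*, v^*)$, and transfer the resulting conclusion back to $(K, \uptau)$ using the definable neighbourhood basis of a V-topology.
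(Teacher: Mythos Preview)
Your overall strategy matches the paper's: the easy direction is identical, and for the converse both arguments cite a strengthening of the root-existence condition from \cite[Section~7]{Prestel1978} that locates the root near $-1$.

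There is, however, a quantifier problem in your Hensel-type lifting. As stated---for a \emph{fixed} $f$ and $a$, there exists $R$ such that $f(a)\in R$ implies a root near $a$---the claim is vacuous: if $f(a)\neq 0$, simply choose $R$ not containing $f(a)$. Your application to the family $f(X)=X^{n+1}+X^n+a_{n-1}X^{n-1}+\dots+a_0$ requires $R$ to be uniform in the coefficients $a_i$, since gt-henselianity demands a \emph{single} neighbourhood $O$ working for all such $a_i$ simultaneously. The uniform statement you actually need is precisely the polynomial implicit function theorem for t-henselian fields \cite[Theorem~7.4]{Prestel1978}, which the paper cites as Fact~\ref{fact:implicit-function} and applies to $X^{n+1}+X^n+Y_{n-1}X^{n-1}+\dots+Y_0\in K[Y_0,\dots,Y_{n-1},X]$ at the point $(0,\dots,0,-1)$: this yields a continuous root function $g$ on a neighbourhood $U_1$ of the origin in $K^n$, and one sets $O=g^{-1}(P\cap U_2)$. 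Your alternative ultrapower route is valid, but it does not repair the quantifier order in the primary argument; once you fix the statement to be uniform in $f$, your argument and the paper's become essentially the same.
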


\noindent
For the proof we need the following fact, a special case of the  polynomial implicit function theorem for t-henselian fields~\cite[Theorem 7.4]{Prestel1978}.
We can also prove a polynomial implicit function theorem for locally bounded gt-henselian field topologies, but we will not do so here.

\begin{fact}
\label{fact:implicit-function}
Suppose that $\uptau$ is t-henselian, $f \in K[Y_1,\ldots,Y_n,X]$, and $(\alpha,\beta) \in K^n \times K$ is such that $f(\alpha,\beta) = 0 \ne \der f/\der X f(\alpha,\beta)$.
Then there are $\uptau$-neighbourhoods $U_1 \subseteq K^n$, $U_2 \subseteq K$ of $\alpha,\beta$, respectively, and a $\uptau$-continuous function $g \colon U_1 \to U_2$ such that 
\[
\{ (a,g(a)) : a \in U_1 \} = 
\{ (a,b) \in U_1 \times U_2 : f(a,b) = 0 \}
\]
\end{fact}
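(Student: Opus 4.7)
The plan is to first establish a parametric Hensel's lemma for t-henselian topologies, and then derive continuity of the implicit function from uniqueness of simple roots. To begin, I would translate variables: replace $f(Y, X)$ with $f(Y_1+\alpha_1,\ldots,Y_n+\alpha_n, X+\beta)$, so that (using that translations are $\uptau$-homeomorphisms) we may assume $\alpha = 0$ and $\beta = 0$. The natural reading of the hypothesis then becomes $f(0,0) = 0$ and $c := (\der f/\der X)(0,0) \neq 0$. For each parameter $a$ close to $0 \in K^n$ set $h_a(X) := f(a, X) \in K[X]$; the goal is to locate a unique root $g(a)$ of $h_a$ in a prescribed $\uptau$-neighborhood $U_2$ of $0$, and to verify continuity of $a \mapsto g(a)$.

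The main tool I would extract from Definition~\ref{def:gt-henselian} is a general Hensel's lemma for t-henselian $K$: given $P \in K[X]$ and $\gamma \in K$ with $P(\gamma)$ sufficiently small relative to $P'(\gamma)^2$ in the size-comparison sense afforded by the V-topology, $P$ has a unique root close to $\gamma$. To derive this, substitute $X = \gamma + \eta Y$ with $\eta = -P(\gamma)/P'(\gamma)$ and rescale to make the polynomial monic in $Y$; the smallness assumption forces the lower-order coefficients into any prescribed $\uptau$-neighborhood of $0$, putting the polynomial into the normal form $Y^{n+1} + Y^n + a_{n-1}Y^{n-1} + \cdots + a_0$ to which Definition~\ref{def:gt-henselian} directly applies and yields a root near $Y = -1$; transporting back gives the desired root of $P$ near $\gamma$. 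The V-topology hypothesis is needed to rescale by arbitrary nonzero elements and to compare sizes of coefficients uniformly, and uniqueness of the root in the prescribed neighborhood is standard in the V-topological setting (e.g.\ via a further application of the same rescaling trick to the would-be difference of two roots).

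Applying this Hensel's lemma parametrically to $h_a$: by continuity of polynomial evaluation, $h_a(0) = f(a,0)$ is as small as desired, and $h_a'(0) = (\der f/\der X)(a,0)$ is close to $c \neq 0$, provided $a$ lies in a sufficiently small $\uptau$-neighborhood $U_1$ of $0$. The Hensel hypothesis is thereby satisfied, yielding a unique root $g(a) \in U_2$, and this root must coincide with $\beta = 0$ when $a = 0$ by uniqueness. Continuity of $g$ at $0$ follows from the uniqueness clause: for any smaller target neighborhood $U_2' \subseteq U_2$ of $0$, re-running the argument with $U_2'$ in place of $U_2$ produces a smaller source neighborhood $U_1' \subseteq U_1$ on which $g$ takes values in $U_2'$; continuity at other points of $U_1$ reduces to this case by a further translation centered at the point in question. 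The main obstacle in executing the plan is the change-of-variables derivation of the general Hensel's lemma from the restricted normal form in Definition~\ref{def:gt-henselian}: this reduction exploits the V-topological structure essentially (both for scalar rescaling and for uniform size comparisons of the coefficients produced by expanding $P(\gamma + \eta Y)$), and is presumably why the implicit function theorem is stated here for t-henselian, rather than merely gt-henselian, field topologies.
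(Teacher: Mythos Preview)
The paper does not give a proof of this statement: it is recorded as a Fact and simply cited as a special case of the polynomial implicit function theorem for t-henselian fields from Prestel--Ziegler~\cite[Theorem~7.4]{Prestel1978}. So there is no argument in the paper to compare against beyond the citation.

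Your sketch is essentially the Prestel--Ziegler proof itself, and the outline is sound. One small correction: the substitution $X = \gamma + \eta Y$ with $\eta = -P(\gamma)/P'(\gamma)$, followed by division by $P(\gamma)$, produces a polynomial of the shape $1 + Y + c_2 Y^2 + \cdots + c_d Y^d$ with the $c_i$ small, \emph{not} the monic form $Y^{n+1} + Y^n + a_{n-1}Y^{n-1} + \cdots + a_0$ of Definition~\ref{def:gt-henselian}. You should therefore invoke the equivalent characterisation~(\ref{item:5}) of Proposition~\ref{prop:general} (obtained from Definition~\ref{def:gt-henselian} via $Y \mapsto 1/Y$) rather than the definition directly. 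With that adjustment, the reduction goes through: the higher coefficients $c_i$ involve factors $P(\gamma)^{i-1}/P'(\gamma)^i$, and the V-topology hypothesis is exactly what lets you force these into any prescribed neighbourhood of $0$ once $P(\gamma)$ is small relative to $P'(\gamma)^2$. Your treatment of uniqueness and of continuity (by shrinking the target and re-running the existence argument) is correct and is again how Prestel--Ziegler proceed.
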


\begin{proof}[Proof of Proposition~\ref{prop:V-t}]
It follows directly from the definitions that a gt-henselian V-topology is t-henselian.
Suppose that $\uptau$ is t-henselian.
Then $\uptau$ is necessarily a V-topology.
Fix $n \ge 1$ and a neighbourhood $P \subseteq K$ of $-1$.
We let $f \in K[Y_1,\ldots,Y_{n-1},X]$ be the polynomial  $X^{n + 1} + X^n + Y_{n - 1}X^n + \ldots + Y_1 X + Y_0$.
Then we have $f(0,\ldots,0,-1) = 0 \ne \der f/\der X (0,\ldots,0,-1)$.
Let $U_1$, $U_2$, and $g$ be as in Fact~\ref{fact:implicit-function}.
Let $O = g^{-1}(P \cap U_2)$.
Then $O$ is a neighbourhood of zero.
By construction, if $\longpolycoeff O$ then $\longpoly$ has a root in $P$.
\end{proof}

\noindent
A significant set of examples for gt-henselian field topologies is furnished by $R$-adic topologies for $R$ henselian.
\begin{proposition}
  Let $R \subsetneq K$ be a henselian local domain with fraction field $K$.
  Then the $R$-adic topology is gt-henselian.
\end{proposition}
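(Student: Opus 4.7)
Here is my plan: verify Definition~\ref{def:gt-henselian} directly, reducing the existence of a root in $-1 + cR$ to the existence of a root in $\mathfrak{m}$, where the basic henselianity of $R$ from Section~\ref{section:all those definitions} applies. Write $\mathfrak{m}$ for the maximal ideal of $R$, which is nonzero since $R \neq K$. Fix $n \geq 1$ and a basic neighborhood of $-1$ of the form $P = -1 + cR$ with $c \in K^\times$.

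I would first pass to a smaller basic neighborhood in order to assume $c \in \mathfrak{m}\setminus\{0\}$: writing $c = p/q$ with $p,q \in R$ nonzero and picking any nonzero $m \in \mathfrak{m}$, the element $c' := pm$ lies in $\mathfrak{m}\setminus\{0\}$ and $c'R \subseteq pR = cqR \subseteq cR$, so $-1 + c'R \subseteq P$. With this reduction, I propose the neighborhood $O := c^2 R$ of $0$. Given $a_0,\ldots,a_{n-1} \in c^2 R$ and the corresponding $f(X) = X^{n+1} + X^n + \sum_{i<n} a_i X^i$, make the substitution $X = cY - 1$ and observe that, using $(cY-1)^{n+1} + (cY-1)^n = cY(cY-1)^n$,
\[ h(Y) := f(cY - 1) = cY(cY-1)^n + \sum_{i=0}^{n-1} a_i (cY - 1)^i \in R[Y]. \]
A direct expansion shows that the $Y^k$-coefficient of $h$ is divisible by $c^k$ for every $k \geq 1$ (the factors of $c$ come from $(cY)^k$), while $h(0) = \sum_i a_i (-1)^i \in c^2 R$ by the assumption on the $a_i$. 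Hence $H(Y) := h(Y)/c$ is a well-defined element of $R[Y]$, with $H(0) \in cR \subseteq \mathfrak{m}$ and $H'(0) = (-1)^n + \sum_{i\geq 1} i\, a_i (-1)^{i-1}$, a unit in $R$ because the $a_i$ all lie in $\mathfrak{m}$. Applying the definition of henselianity to $H$ at the point $y_0 = 0$ produces $y \in R$ with $H(y) = 0$ and $y \in \mathfrak{m}$; then $h(y) = c\,H(y) = 0$, so $X := cy - 1$ is a root of $f$, and $X \in -1 + c\mathfrak{m} \subseteq -1 + cR = P$, as required.

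I expect the only real technical point to be the bookkeeping in the transformation $f \mapsto h \mapsto H$: one must simultaneously verify that every coefficient of $h$ is divisible by $c$, so that $H = h/c$ lives in $R[Y]$, and that after this division the constant term lands in $\mathfrak{m}$, so that the basic henselian property of $R$ applies. It is the second requirement which forces the \emph{quadratic} smallness condition $a_i \in c^2 R$, rather than the weaker $a_i \in cR$ one might have guessed from the linear substitution $X = cY - 1$.
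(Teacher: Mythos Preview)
Your argument is correct. The substitution $X = cY - 1$ followed by the division $H = h/c$ works as you describe, and the paper's stated definition of a henselian local ring (for \emph{any} $f \in R[X]$, not only monic $f$) applies directly to $H$ at $y_0 = 0$. The reduction to $c \in \mathfrak{m}$ and the choice $O = c^2R$ are both handled cleanly.

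The paper proceeds differently. It takes $O = \alpha R$ for $\alpha \in \mathfrak{m}$ nonzero and observes that the statement ``for all $a_0,\dots,a_{n-1} \in \alpha R$, the polynomial $X^{n+1}+X^n+\sum a_iX^i$ has a root in $-1+\alpha R$'' is exactly one of the equivalent characterizations of $(R,\alpha R)$ being a \emph{henselian pair} (Stacks Project, Tag~09XI). It then cites Tag~0DYD to deduce that $(R,\alpha R)$ is a henselian pair from the fact that $(R,\mathfrak{m})$ is. Thus the paper's proof is essentially a two-line appeal to the general machinery of henselian pairs, whereas yours is a self-contained computation using only the raw definition of a henselian local ring. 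Your approach buys elementarity and avoids external references; the paper's approach buys brevity and, as a by-product, the slightly sharper neighbourhood $O = \alpha R$ rather than $O = \alpha^2 R$ (which is immaterial for the statement). Your closing remark correctly identifies why the quadratic smallness is the price of the hands-on route: after dividing $h$ by $c$, one still needs $H(0)\in\mathfrak{m}$, and only $a_i\in c^2R$ guarantees this.
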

\begin{proof}
  Let $P \subseteq K$ be an $R$-adic neighbourhood of $-1$.
  Then $P$ contains $-1 + \alpha R$ for some $\alpha \in K^\times$.
  By multiplying $\alpha$ with a suitable element of $R$, we may assume that $\alpha \in R$ and $\alpha$ is not a unit.
  It now suffices to show that for every $n$ and all $a_0, \dotsc, a_{n-1} \in \alpha R$, the polynomial $X^{n+1} + X^n + a_{n-1}X^{n-1} + \dotsb + a_1 X + a_0$ has a root in $-1 + \alpha R$.
  This precisely means that $(R, \alpha R)$ is a \emph{henselian pair} (see the characterization in \cite[Tag 09XI (5)]{stacks-project}), which follows from \cite[Tag 0DYD]{stacks-project} since $(R, \mathfrak m)$ is a henselian pair (where $\mathfrak m$ is the maximal ideal of $R$).
\end{proof}

\noindent
We let $\polyn$ be the $K$-variety parameterizing degree $n$ monic polynomials, so $\polyn$ is just a copy of $\Aa^n$.
Recall that $\alpha \in K$ is a \textbf{simple root} of $f \in K[X]$ if $f(\alpha) = 0$ and $f'(\alpha) \ne 0$.

\begin{proposition}
\label{prop:general}
The following are equivalent:
\begin{enumerate}
[leftmargin=*]
\item\label{item:4} $\uptau$ is gt-henselian.
\item\label{item:6} For any $n$ and neighbourhood $P$ of $1$ there is a neighbourhood $O$ of $0$ such that if $a_0,\ldots,a_{n - 1} \in O$ then $X^{n+1} - X^n + a_{n - 1}X^{n - 1} + \ldots + a_1 X + a_0$ has a root in $P$.
\item\label{item:5} For any $n$ and neighbourhood $P$ of $-1$ there is a neighbourhood $O$ of $0$ such that if $c_2,\ldots,c_n \in O$ then $1 + X + c_2 X^2 + \ldots + c_n X^n$ has a root in $P$.
\item\label{item:3} If $\alpha \in K$ is a simple root of a monic polynomial $f \in K[X]$, $\deg f = n$, and $P \subseteq K$ is a neighbourhood of $\alpha$ then there is a neighbourhood $O \subseteq \polyn(K)$ of $f$ such that every $f^* \in O$ has a simple root in $P$.
\item\label{item:1} $V(K) \to W(K)$ is $\uptau$-open for any \'etale morphism $V \to W$.
\item\label{item:2} $V(K) \to W(K)$ is $\uptau$-open for any smooth morphism $V \to W$.
\end{enumerate}
\end{proposition}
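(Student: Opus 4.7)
The plan is to establish the equivalences in two phases: first, polynomial-level manipulations linking $(\ref{item:4})$, $(\ref{item:6})$, $(\ref{item:5})$, $(\ref{item:3})$, and then the local structure theory of étale and smooth morphisms to connect these to $(\ref{item:1})$ and $(\ref{item:2})$.

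For the polynomial substitutions, $(\ref{item:4}) \Leftrightarrow (\ref{item:6})$ follows via the substitution $Y = -X$, exactly as in the proof of Fact~\ref{fact:hensel-equiv}, using that $X \mapsto -X$ is a $\uptau$-homeomorphism of $K$. The equivalence $(\ref{item:4}) \Leftrightarrow (\ref{item:5})$ uses the reciprocal polynomial: a nonzero root $\alpha$ of $X^{n+1} + X^n + a_{n-1}X^{n-1} + \dots + a_0$ corresponds to a root $1/\alpha$ of $a_0 Y^{n+1} + \dots + a_{n-1}Y^2 + Y + 1$, and inversion on $K^\times$ is a $\uptau$-homeomorphism fixing $-1$, so neighbourhoods of $-1$ correspond under inversion (shrinking first to avoid $0$). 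The implication $(\ref{item:3}) \Rightarrow (\ref{item:4})$ is a specialization: apply $(\ref{item:3})$ to the monic degree $n+1$ polynomial $X^{n+1} + X^n$, which has $-1$ as a simple root, and restrict the provided neighbourhood of $f$ in $\mathrm{Pol}_{n+1}(K) = K^{n+1}$ to the slice where the coefficients of $X^{n+1}$ and $X^n$ remain $1$, which is an open neighbourhood of $0$ in the remaining $K^n$ coordinates.

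The main technical step is $(\ref{item:5}) \Rightarrow (\ref{item:3})$. Given $f \in \polyn(K)$ with simple root $\alpha$ and a $\uptau$-neighbourhood $P$ of $\alpha$, substitute $X = \alpha + f'(\alpha) Y$ into $f^*(X) = 0$ for perturbed monic $f^* \in \polyn(K)$. After dividing by $f'(\alpha)^2$, the equation takes the form $\mu + A_1(f^*) Y + A_2(f^*) Y^2 + \dots + A_n(f^*) Y^n = 0$ with $\mu := f^*(\alpha)/f'(\alpha)^2$ tending to $0$ and $A_1(f^*)$ tending to $1$ as $f^* \to f$. A further rescaling $Y = \mu W / A_1(f^*)$ transforms this into $1 + W + c_2 W^2 + \dots + c_n W^n = 0$ with $c_k := A_k(f^*) \mu^{k-1}/A_1(f^*)^k$ tending to $0$. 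One first chooses a $\uptau$-neighbourhood $P^*$ of $-1$, along with $\uptau$-neighbourhoods $M$ of $0$ and $A$ of $1$, such that $(m, a, w) \mapsto f'(\alpha) m w / a$ sends $M \times A \times P^*$ into $P - \alpha$; this is possible by continuity of field operations at the point $(0, 1, -1) \mapsto 0$. Then $(\ref{item:5})$ applied to $P^*$ yields a neighbourhood $O^* \subseteq K^{n-1}$ of $0$. Finally, pick $O \subseteq \polyn(K)$ small enough that $f^* \in O$ forces $\mu \in M$, $A_1(f^*) \in A$ and $(c_2,\dots,c_n) \in O^*$. The root $W \in P^*$ supplied by $(\ref{item:5})$ translates back to a root $X \in P$ of $f^*$, and simplicity follows because $f^{*\prime}(X)$ is $\uptau$-close to $f'(\alpha) \ne 0$.

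For the geometric equivalences, $(\ref{item:1}) \Rightarrow (\ref{item:3})$ follows by applying $(\ref{item:1})$ to the ``universal'' étale morphism $V \to \Aa^n_K$, where $V$ is the open subscheme of the hypersurface $\{X^n + T_{n-1}X^{n-1} + \dots + T_0 = 0\} \subseteq \Spec K[T_0,\dots,T_{n-1},X]$ defined by $\partial / \partial X \ne 0$. A point $(f,\alpha) \in V(K)$ encodes a simple root $\alpha$ of a monic polynomial $f$, and $\uptau$-openness of the projection $V(K) \to \polyn(K)$ after intersecting with the preimage of a neighbourhood of $\alpha$ in the $X$-coordinate is precisely $(\ref{item:3})$. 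Conversely, $(\ref{item:3}) \Rightarrow (\ref{item:1})$ uses that every étale morphism is Zariski-locally standard étale (\cite[Tag~00UE]{stacks-project}), hence locally a pullback of the universal family along the coefficient morphism $W \to \Aa^n_K$; $\uptau$-openness is preserved under this base change because on $K$-points the topology of the fibre product is the subspace topology of the product and all morphisms induce $\uptau$-continuous maps. Finally, $(\ref{item:1}) \Leftrightarrow (\ref{item:2})$: every smooth morphism factors Zariski-locally on its source as an étale morphism followed by a projection from relative affine space, where the projection is $\uptau$-open by the product structure on $K$-points and the étale map by $(\ref{item:1})$.

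The main obstacle is the careful ordering of neighbourhoods in $(\ref{item:5}) \Rightarrow (\ref{item:3})$, since the normalising scalars $\mu$ and $A_1(f^*)$ depend on the perturbation $f^*$. Without a local-boundedness hypothesis on $\uptau$ one cannot invoke uniform estimates, only pointwise continuity of multiplication at the limit point $(0, 1, -1)$; this forces one to fix $P^*$ (and $M$, $A$) \emph{before} selecting $O$, and then to verify that for all $f^* \in O$ simultaneously the rescaling produces coefficients $c_k$ in the prescribed neighbourhood $O^*$ of $0$.
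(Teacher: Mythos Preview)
Your proposal is correct and follows essentially the same route as the paper: the same substitutions $Y=-X$ and $Y=1/X$ for the easy equivalences, the same specialisation to $X^{n+1}+X^n$ for $(\ref{item:3})\Rightarrow(\ref{item:4})$, the same universal simple-root family for $(\ref{item:1})\Leftrightarrow(\ref{item:3})$, and the same local factorisation for $(\ref{item:1})\Leftrightarrow(\ref{item:2})$. Your treatment of $(\ref{item:5})\Rightarrow(\ref{item:3})$ is a reparametrised version of the paper's: where the paper first translates to $\alpha=0$ and then substitutes $X\mapsto b_0 b_1^{-1}X$, you substitute $X=\alpha+f'(\alpha)Y$ and then $Y=\mu W/A_1$; unwinding, these are the same affine change of variable.

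One small omission: your rescaling $Y=\mu W/A_1(f^*)$ is only valid when $\mu\ne 0$, i.e.\ when $f^*(\alpha)\ne 0$. The paper handles the degenerate case explicitly (``if $b_0=0$, then $0$ is a simple root of $g$''); in your notation, when $\mu=0$ the point $\alpha$ itself is a root of $f^*$, and it is simple because $A_1(f^*)\in A$ forces $(f^*)'(\alpha)\ne 0$. You should also arrange the simplicity conclusion in advance by shrinking the neighbourhoods so that $(f^*)'(X)$ lies in a fixed neighbourhood of $f'(\alpha)$ avoiding $0$, rather than asserting it after the fact. Finally, your base-change justification in $(\ref{item:3})\Rightarrow(\ref{item:1})$ is correct but compressed: the key point, which the paper spells out directly, is that the image of a basic open $O_W\times O_U$ in $W(K)$ is $O_W\cap \upiota^{-1}(\text{image of }O_U)$, open by continuity of the coefficient map $\upiota$ and openness of the universal family.
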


\begin{definition}
\label{defn:standard etale}
A basic standard \'etale morphism is a morphism $\uppi \colon V \to W$ where $W$ is an affine $K$-variety, $V$ is the subvariety of $W \times \Aa^1$ given by $f = 0 \ne g$ for $f,g \in (K[W])[X]$ such that $f$ is monic, $\der f/\der X \ne 0$ on $V$, and $\uppi$ is the restriction of the projection $W \times \Aa^1 \to W$ to $V$.
A standard \'etale morphism is a morphism $\uppi\colon V \to W$ of $K$-varieties such that there is a $K$-variety isomorphism $\uprho \colon V^* \to V$ with $\uppi\circ\uprho\colon V^* \to W$ basic standard \'etale.
\end{definition}
\noindent
Fact~\ref{fact:locally standard} is \cite[Tag~02GT]{stacks-project}. 

\begin{fact}
\label{fact:locally standard}
Any \'etale morphism of $K$-varieties is locally standard \'etale.
That is, if $V \to W$ is an \'etale morphism of $K$-varieties and $p \in V$ then there is a Zariski open neighbourhood $V^* \subseteq V$ of $p$ and an affine Zariski open neighbourhood $W^* \subseteq W$ of $f(p)$ such that $f(W^*) \subseteq V^*$ and $V^* \to W^*$ is standard \'etale.
\end{fact}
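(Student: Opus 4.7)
The plan is to prove Fact~\ref{fact:locally standard} via the classical local structure theorem for étale morphisms, reducing to a commutative algebra computation.

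First, I would localize the problem. Choose an affine open $\operatorname{Spec} A \subseteq W$ around $f(p)$ and an affine open $\operatorname{Spec} B \subseteq f^{-1}(\operatorname{Spec} A) \subseteq V$ around $p$, so that $A \to B$ is an étale ring homomorphism between finite-type $K$-algebras. Let $\mathfrak q \subseteq B$ be the prime corresponding to $p$ and $\mathfrak p = A \cap \mathfrak q$. The task is to exhibit, after possibly inverting elements not in $\mathfrak q$, an isomorphism of $A$-algebras $B \cong A[X]_{h}/(g)$ with $g$ monic and $g'$ invertible in the quotient, which is precisely the basic standard étale form of Definition~\ref{defn:standard etale}.

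Since étale morphisms are unramified, the residue field extension $\kappa(\mathfrak q)/\kappa(\mathfrak p)$ is finite separable, so by the primitive element theorem there is $\bar\alpha \in \kappa(\mathfrak q)$ with $\kappa(\mathfrak q) = \kappa(\mathfrak p)(\bar\alpha)$ and minimal polynomial $\bar g \in \kappa(\mathfrak p)[X]$. I would lift $\bar\alpha$ to some $\alpha \in B$ (after inverting an element outside $\mathfrak q$ so the lift is defined on a Zariski open) and $\bar g$ to a monic $g \in A[X]$, then consider the $A$-algebra map $\varphi \colon A[X]/(g) \to B$ sending $X \mapsto \alpha$. A Nakayama-type argument applied to the $A_{\mathfrak p}$-module $B_\mathfrak q / \varphi(A[X]/(g))_\mathfrak q$, using flatness of $A \to B$, shows that $\varphi$ becomes surjective after localizing at $\mathfrak q$; a parallel argument on the kernel (exploiting that $A[X]/(g)$ is also flat over $A$ and that étale morphisms are characterized by the conormal/module of differentials vanishing) yields injectivity. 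After inverting the derivative $g'(\alpha)$ and finitely many additional denominators, $\varphi$ becomes an isomorphism on a principal open, producing the desired basic standard étale presentation.

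The main obstacle is this fourth commutative-algebra step: establishing that $\varphi$ becomes an isomorphism after the standard localization, which requires combining Nakayama with flatness and with the precise equivalence between étale and ``flat plus unramified plus finite presentation'' to control both kernel and cokernel. Once the local isomorphism is in place, spreading out the finitely many inverted denominators to a Zariski open neighbourhood gives $V^*$ and $W^*$ satisfying $f(V^*) \subseteq W^*$ with $V^* \to W^*$ standard étale, completing the proof. Since this is the content of \cite[Tag~02GT]{stacks-project}, a full write-up would properly be deferred to that reference.
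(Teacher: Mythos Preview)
The paper gives no proof: it simply records that Fact~\ref{fact:locally standard} is \cite[Tag~02GT]{stacks-project}. Your proposal sketches the argument behind that reference and then defers to it, so in approach there is nothing to compare.

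One technical slip in your sketch is worth flagging. The map $\varphi \colon A[X]/(g) \to B$, $X \mapsto \alpha$, is not well-defined as written: an arbitrary monic lift $g \in A[X]$ of the minimal polynomial $\bar g \in \kappa(\mathfrak p)[X]$ only guarantees $g(\alpha) \in \mathfrak q$, not $g(\alpha) = 0$ in $B$, so you cannot pass to the quotient $A[X]/(g)$ at this stage. The standard remedies are either to work instead with the surjection $A[X] \to B$ (after localization) and show its kernel is principal generated by a polynomial of the required shape, or---as the Stacks project does---to first invoke Zariski's Main Theorem to reduce to the case where $B$ is finite free over a localization of $A$, after which $\alpha$ genuinely satisfies a monic equation over $A$ and one verifies $B = A[\alpha]$ directly via Nakayama. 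Since you explicitly defer the details to the cited tag, this does not undermine the proposal, but the sketch as written would not go through at that step.
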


\noindent
In the following proof, we work with respect to $\uptau$ throughout.

\begin{proof}[Proof of Proposition~\ref{prop:general}]
The equivalence of (\ref{item:4}) and (\ref{item:5}) is clear by considering the substitution $Y = 1/X$.
The equivalence of (\ref{item:4}) and (\ref{item:6}) is likewise clear by considering the substitution $Y = -X$.
The implication from (\ref{item:2}) to (\ref{item:1}) is clear since étale morphisms are smooth, and the converse holds since a smooth morphism is locally the composition of an étale morphism and a product projection \cite[Tag~054L]{stacks-project}, see also \cite[Proposition 3.1]{secondpaper}.

We show that (\ref{item:3}) implies (\ref{item:1}).
Suppose (\ref{item:3}) and let $\uppi\colon V \to W$ be \'etale.
We show that $V(K) \to W(K)$ is $\uptau$-open.
By Fact~\ref{fact:locally standard} we may suppose that $\uppi$ is basic standard \'etale.
Let $\uppi$, $f$, and $g$ be as in Definition~\ref{defn:standard etale}.
Given $\alpha \in W(K)$ let $f_\alpha \in K[X]$ be given by evaluating $f$ at $\alpha$ and let $\upiota \colon W(K) \to \polyn(K)$ be $\upiota(\alpha) = f_\alpha$.
Note that $\upiota$ is continuous with respect to $\uptau$.
It is enough to fix $(\alpha,\beta) \in V(K)$ and a neighbourhood $P \subseteq W(K) \times K$ of $(\alpha,\beta)$ and show that $\uppi(V(K) \cap P)$ is a neighbourhood of $\alpha$.
We may suppose that $P$ is contained in the open subvariety of $W \times \Aa^1$ given by $g \ne 0$.
As the $\uptau$-topology on $W(K) \times K$ is the product topology we suppose that $P = O^* \times U$ for a neighbourhood $O^* \subseteq W(K)$ of $\alpha$ and a neighbourhood $U \subseteq K$ of $\beta$.
Note that $\beta$ is a simple root of $f_\alpha$ as $\der f/\der X$ does not vanish at $(\alpha,\beta)$.
Hence there is a neighbourhood $O \subseteq \polyn(K)$ such that every $f^* \in O$ has a simple root in $U$.
We show that $O^* \cap \upiota^{-1}(O)$ is contained in $\uppi(V(K) \cap P)$, note that $O^* \cap \upiota^{-1}(O)$ is a neighbourhood of $\alpha$.
Fix $\gamma \in O^* \cap \upiota^{-1}(O)$.
Then $f_\gamma \in O$, hence $f_\gamma$ has a simple root $\eta$ in $U$.
We show that $(\gamma,\eta) \in V(K)\cap P$.
Note $f(\gamma,\eta) = f_\gamma(\eta) = 0$.
As $\gamma \in O^*$ and $\eta \in U$ we have $(\gamma,\eta) \in P$, so $g(\gamma, \eta) \ne 0$, hence $(\gamma,\eta) \in V(K)$.

We show that (\ref{item:1}) implies (\ref{item:3}).
Suppose (\ref{item:1}) and fix $n \ge 2$ ((\ref{item:3}) is trivial for $n=1$).
Let $V$ be the subvariety of $\Spec K[Y_1,\ldots,Y_n,X] = \Aa^{n } \times \Aa^1$ given by $X^n + Y_{n - 1} X^{n - 1} + \ldots + Y_1 X + Y_0 = 0$ and $(\der/\der X) [X^n + Y_{n - 1} X^{n - 1} + \ldots + Y_1 X + Y_0 ] \ne 0$.
Let $\uppi \colon V \to \Aa^n$ be the projection.
Then $\uppi$ is standard \'etale, hence the projection $V(K) \to K^n$ is open.
Suppose $a = (a_0,\ldots,a_{n - 1}) \in K^n$, $f(X) = X^n + a_{n - 1} X^{n - 1} + \ldots + a_1 X + a_0$,  $b \in K$ is a simple root of $f$, and $P \subseteq K$ is a neighbourhood of $b$.
Note that $(a,b) \in V(K)$.
Let $O = \uppi([K^n \times P] \cap V(K))$, so $O$ is a neighbourhood of $a$.
It is easy to see that $f^*(X) = X^n + a^*_{n - 1} X^{n - 1} + \ldots + a^*_1 X + a^*_0$ has a simple root in $P$ for any $(a^*_0,\ldots,a^*_{n - 1}) \in O$.

We show that (\ref{item:3}) implies (\ref{item:4}).
Let $P$ be a neighbourhood of $-1$.
Note that $-1$ is a simple root of $X^{n + 1} + X^n$.
Hence there is a neighbourhood $O \subseteq K^n$ of $(1,0,\ldots,0)$ such that if $a = (a_{n },\ldots,a_0) \in O$ then $X^{n + 1} + a_{n}X^{n} + \ldots + a_1 X + a_0$ has a root in $P$.
Fix a neighbourhood $Q \subseteq K$ of $0$ such that $\{1\} \times Q^n \subseteq O$.
Then $X^{n + 1} + X^n + a_{n - 1} X^{n - 1} + \ldots + a_1 X + a_0$ has a root in $P$ for all $a_0,\ldots,a_{n - 1} \in Q$.
Hence $\uptau$ is gt-henselian.

We finish by showing that (\ref{item:5}) implies (\ref{item:3}).
Suppose (\ref{item:5}).
Let $f \in K[X]$ be monic of degree $n$, and let $\alpha \in K$ be a simple root of $f$.
The change of variables $Y = X - \alpha$ induces an automorphism of $\polyn(K)$, so we may assume without loss of generality that $\alpha = 0$.
Thus $f = a_1 X + \dotsb + a_{n-1}X^{n-1} + X^n$ with coefficients $a_i \in K$, $a_1 = f'(0) \neq 0$.

Let $P \subseteq K$ be a neighbourhood of $0$.
Let $P' \subseteq P$ be a smaller neighbourhood of $0$ such that $-1 \not\in P'$ and $a_1^{-1} \cdot P' \cdot (1 + P')^{-1}(-1+P') \subseteq P$.
By (\ref{item:5}) there exists a neighbourhood $O$ of $0$ such that every polynomial $1 + X + c_2 X^2 + \dotsb + c_n X^n$ with $c_i \in O$ has a root in $-1+P'$.
By shrinking $O$ and $P'$, we may assume that any root in $-1+P'$ of any such polynomial is simple.

Let $O^\ast$ be the set of polynomials $b_0 + b_1 X + \dotsb + b_{n-1}X^{n-1} + X^n$ in $\polyn(K)$ with $b_0 \in P'$, $b_1 \in a_1 (1 + P')$, and $b_i b_0^{i-1} b_1^{-i} \in O$ for all $i = 2, \dotsc, n$.
This is a neighbourhood of $f$ in $\polyn(K)$.

Let us show that every $g = b_0 + b_1 X + \dotsb + b_{n-1}X^{n-1} + X^n \in O^\ast$ has a simple root in $P$.
If $b_0 = 0$, then $0$ is a simple root of $g$.
Otherwise, consider the polynomial $h = b_0^{-1}g(b_0 b_1^{-1}X) \in K[X]$.
By construction, $h$ has the form $1 +X + c_2 X^2 + \dotsb + c_n X^n$ with $c_i \in O$, and thus has a simple zero in $-1+P'$.
Therefore $g$ has a simple zero in $b_0 b_1^{-1}(-1+P') \subseteq P$, as desired.
\end{proof}

\begin{remark}
  We have seen in Section \ref{section:example} that the field $\Qq_p$ carries $2^{2^{\aleph_0}}$ many pairwise incomparable locally bounded gt-henselian topologies.
  This is in marked contrast to t-henselian topologies, where a field which is not separably closed can admit at most one such (\cite[Theorem 7.9]{Prestel1978}, essentially F.~K.~Schmidt's theorem on independent henselian valuations).
  Therefore, while it is sensible to speak of t-henselian fields and \emph{the} t-henselian topology on one such (forbidding separably closed fields), we avoid the analogous terminology in the gt-henselian case.
\end{remark}

\noindent
The analysis of the topological field $(K, \uptau)$ simplifies when $\uptau$ is $\omega$-complete, i.e.\ it the collection of neighbourhoods of $0$ is closed under countable intersections.
Using an ultrapower argument, Prestel-Ziegler in \cite[Theorem 1.1]{Prestel1978} show that every $(K, \uptau)$ may be replaced by some $(K^\ast, \uptau^\ast)$ which is ``locally equivalent'' to $(K, \uptau)$ and such that $\uptau^\ast$ is $\omega$-complete.
Here local equivalence means that $(K, \uptau)$ and $(K^\ast, \uptau^\ast)$ satisfy the same sentences in a certain logic extending first-order logic in the language of rings, allowing restricted second-order quantification over neighbourhoods of $0$.
See \cite[Section 1]{Prestel1978} for details on this formalism.
\begin{lemma}\label{lem:transfer}
  Let $(K^\ast, \uptau^\ast)$ be locally equivalent to $(K, \uptau)$.
  Then $\uptau^\ast$ is gt-henselian (t-henselian) if and only if $\uptau$ is gt-henselian (t-henselian).
\end{lemma}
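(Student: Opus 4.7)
The plan is to show that gt-henselianity (and, separately, t-henselianity) can be expressed as a collection of sentences in the Prestel--Ziegler logic allowing restricted second-order quantification over neighbourhoods of $0$, so that preservation under local equivalence follows immediately from the definition of local equivalence.

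For gt-henselianity, the key observation is that a neighbourhood of $-1$ is precisely a set of the form $-1 + U$ for $U$ a neighbourhood of $0$. Hence, for each fixed $n \geq 1$, the gt-henselianity condition of Definition~\ref{def:gt-henselian} at level $n$ is equivalent to the following schema: for every neighbourhood $U$ of $0$ there is a neighbourhood $O$ of $0$ such that for all $a_0, \dotsc, a_{n-1}$ with each $a_i \in O$, there exists $x$ with $x + 1 \in U$ and $x^{n+1} + x^n + a_{n-1} x^{n-1} + \dotsb + a_1 x + a_0 = 0$. First I would make this translation explicit, observing that it yields, for each $n$, a sentence $\varphi_n$ in the Prestel--Ziegler logic (one universal neighbourhood quantifier, one existential neighbourhood quantifier, first-order quantifiers over elements of $K$, and ring-theoretic predicates). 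Gt-henselianity is then precisely the condition that each $\varphi_n$ is satisfied, so is preserved under local equivalence by definition.

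For t-henselianity, I would invoke Proposition~\ref{prop:V-t}, which reduces the question to showing that both the gt-henselian property and the V-topology property are preserved by local equivalence. The former is handled as above. The latter is classical: the V-topology axioms are given by a schema of Prestel--Ziegler sentences (see \cite[Section~3]{Prestel1978}), and indeed the entire motivation for the Prestel--Ziegler logic is precisely to treat such properties uniformly. Combining these two preservations yields the t-henselian half of the lemma.

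The main (minor) obstacle is to carry out the syntactic translation of the gt-henselian condition into the Prestel--Ziegler logic cleanly, in particular handling the fact that Definition~\ref{def:gt-henselian} quantifies over neighbourhoods of $-1$ rather than of $0$. This is resolved by the translation trick above. Everything else reduces to invoking the Prestel--Ziegler formalism and, for the t-henselian case, our earlier Proposition~\ref{prop:V-t}.
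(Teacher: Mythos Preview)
Your proposal is correct and follows essentially the same approach as the paper: both argue that gt-henselianity is expressed by a collection of local sentences in the Prestel--Ziegler formalism, whence preservation under local equivalence is immediate. The only minor difference is in the t-henselian case: the paper simply cites \cite[Corollary~7.3]{Prestel1978} directly for the fact that t-henselianity is expressible by local sentences, whereas you route through Proposition~\ref{prop:V-t} to decompose t-henselianity as gt-henselianity plus the V-topology axioms. Both arguments are valid and of comparable length.
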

\begin{proof}
  It is immediate from the definition that gt-henselianity is expressed by a collection of local sentences.
  The same holds for t-henselianity (as is already expressed in \cite[Corollary 7.3]{Prestel1978}).
\end{proof}

\noindent
For $\omega$-complete field topologies, we have the following.
\begin{fact}
\label{fact:pz}
Suppose that $\uptau$ is $\omega$-complete.
Then $\uptau$ is locally bounded if and only if $\uptau$ is the $S$-adic topology for a local subring $S$ of $K$ with $K = \Frac(S)$.
Furthermore $\uptau$ is a V-topology if and only if $\uptau$ is the $S$-adic topology for a valuation subring $S$ of $K$ and $\uptau$ is t-henselian if and only if $\uptau$ is the $S$-adic topology for a henselian valuation subring $S$ of $K$.
\end{fact}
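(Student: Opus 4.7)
The plan is, for each of the three equivalences, to produce a subring $S$ of $K$ from the topological data using an $\omega$-complete Cauchy-style construction. The converse directions are essentially immediate: local subrings give locally bounded topologies with $S$ itself as a bounded neighborhood of $0$, valuation subrings induce V-topologies by direct inspection, and henselian valuation subrings induce t-henselian topologies via Fact~\ref{fact:hensel-equiv}.

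For the forward direction in the locally bounded case, fix a bounded symmetric open neighborhood $U_0$ of $0$ and inductively build a decreasing chain of open neighborhoods $U_0 \supseteq U_1 \supseteq \cdots$ of $0$ satisfying $U_{n+1} + U_{n+1} \subseteq U_n$, $U_{n+1} \cdot U_{n+1} \subseteq U_n$, and $U_{n+1} \cdot U_0 \subseteq U_n$; the last two of these use continuity of multiplication and boundedness of $U_0$, respectively. By $\omega$-completeness, $T := \bigcap_n U_n$ is a neighborhood of $0$, and by construction it is a bounded subring of $K$. Facts~\ref{fact:open subring} and~\ref{fact:open subring 2} then yield $\Frac(T) = K$ and that $\uptau$ is the $T$-adic topology. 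To upgrade $T$ to a local subring, one checks via a Neumann-series argument, made convergent by $\omega$-completeness, that every element of $T$ lying outside the set $\mathfrak{m}$ of topologically nilpotent elements has an inverse in $T$; this forces $\mathfrak{m}$ to be the unique maximal ideal of $T$.

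For the V-topology case, incorporate into the inductive construction of the $U_n$ the prime-like property enjoyed by V-topologies (for every neighborhood $U$ of $0$ there exists a neighborhood $V$ of $0$ such that $xy \in V \Rightarrow x \in U$ or $y \in U$). Since for every $x \in K^\times$ the product $x \cdot x^{-1} = 1$ lies in every $U_n$, this forces at least one of $x, x^{-1}$ to lie in $T$, making $T$ a valuation subring of $K$. The t-henselian case then follows by combining the V-topology conclusion with Proposition~\ref{prop:V-t} and the gt-henselian polynomial root-finding property: applied with $P = -1 + \mathfrak{m}$ and a small enough $O \subseteq \mathfrak{m}$, the condition in Definition~\ref{def:gt-henselian} gives exactly characterization (2) of henselianity in Fact~\ref{fact:hensel-equiv} for the valuation ring $T$. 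The main obstacle is establishing locality of $T$ in the first part: the raw intersection only yields a bounded open subring, and making the Neumann-series inversion step go through requires careful coordination between $\omega$-completeness and the defining conditions on the $U_n$, so that the topologically nilpotent elements genuinely form an ideal whose complement consists of units.
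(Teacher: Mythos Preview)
The paper does not actually prove this fact; it simply cites Prestel--Ziegler for all three claims (the proof of Theorem~2.2(b), Lemma~3.3, and Theorem~7.2 there, respectively). Your proposal instead sketches a direct argument, broadly following the Prestel--Ziegler strategy of producing a subring as a countable intersection of neighbourhoods. The overall shape is right, but two steps are genuinely broken.

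First, the locality argument. You propose to obtain locality of $T$ via a Neumann series ``made convergent by $\omega$-completeness''. But $\omega$-completeness only says that countable intersections of neighbourhoods of $0$ are neighbourhoods of $0$; it is \emph{not} Cauchy completeness, and gives no reason for partial sums $\sum_{k<N} m^k$ to converge in $(K,\uptau)$. Nor does membership in your $T$ force topological nilpotence, and nothing in the setup produces $x^{-1} \in T$ from ``$x \in T$ not topologically nilpotent''. The actual Prestel--Ziegler fix (visible in the paper's proof of Proposition~\ref{prop:t-hensel}) is to build the condition $(1+U_{n+1})^{-1} \subseteq 1+U_n$ directly into the inductive construction using continuity of inversion, and then to take the ring to be $K_{\mathrm{pr}} + P$ with $P = \bigcap_n U_n$; locality follows because $1 + P \subseteq (K_{\mathrm{pr}}+P)^\times$ by construction. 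Note also that your $T$ as written need not contain $1$, so speaking of its units is premature.

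Second, the V-topology step is incorrect as stated: you assert that $x \cdot x^{-1} = 1$ lies in every $U_n$, but the $U_n$ are \emph{small} neighbourhoods of $0$ and certainly do not contain $1$, so the implication ``$xy \in V \Rightarrow x \in U$ or $y \in U$'' never fires with $y = x^{-1}$. The Prestel--Ziegler argument (their Lemma~3.3) instead exploits the V-topology axiom to show that sets of the form $\{x \in K : x^{-1} \notin U\} \cup \{0\}$ are bounded neighbourhoods of $0$, and under $\omega$-completeness one such set is a valuation ring.
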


\noindent
We note that Fact~\ref{fact:pz} can fail without $\omega$-completeness.
For instance, it fails for the usual topology on $\Rr$ or $\Cc$.

\begin{proof}[Proof of Fact~\ref{fact:pz}]
The first claim is in the proof of \cite[Theorem~2.2 (b)]{Prestel1978}, the second is~\cite[Lemma~3.3]{Prestel1978}, and the third follows from \cite[Theorem~7.2]{Prestel1978}.
\end{proof}

\noindent
A subset of $K$ is a \textbf{henselian ideal} if it is the maximal ideal of a henselian local subring of $K$ with fraction field $K$.
We say that $\uptau$ is induced by a henselian local ring if $\uptau$ is the $R$-adic topology for a henselian local subring~$R$~of~$K$ with $\Frac(R)=K$.

\begin{proposition}
\label{prop:t-hensel}
Suppose $\uptau$ is $\omega$-complete.
The following are equivalent:
\begin{enumerate}
\item $\uptau$ is gt-henselian.
\item $\uptau$ admits a neighbourhood basis at zero consisting of henselian ideals.
\end{enumerate}
If $\uptau$ is also locally bounded then $\uptau$ is gt-henselian if and only if $\uptau$ is induced by a henselian local ring.
\end{proposition}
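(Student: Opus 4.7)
The plan is to prove (1) $\Leftrightarrow$ (2); the locally bounded corollary will then follow easily.

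For (2) $\Rightarrow$ (1): given a $\uptau$-neighborhood $P$ of $-1$ and $n \geq 1$, pick from the assumed basis a henselian ideal $I \subseteq P+1$, writing $I = \mathfrak{m}_R$ for a henselian local $R \subsetneq K$ with $\operatorname{Frac}(R) = K$. By Fact~\ref{fact:hensel-equiv}(3), every polynomial $X^{n+1}+X^n+a_{n-1}X^{n-1}+\ldots+a_0$ with $a_i \in I$ has a root in $\mathfrak{m}_R - 1 \subseteq P$, so $O := I$ witnesses gt-henselianity for this $n$ and $P$.

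For (1) $\Rightarrow$ (2): given a neighborhood $U$ of $0$, I construct a henselian ideal inside $U$. Using continuity of the ring operations together with $\omega$-completeness to combine countably many constraints, I aim to produce a neighborhood $V \subseteq U$ of $0$ which is an additive subgroup of $K$, is closed under multiplication, satisfies $-1 \notin V$ and $(1+V)^{-1} \subseteq 1+V$, and has the self-referential property that $V \subseteq O_n$ for every $n \geq 1$, where $O_n$ denotes a gt-henselian witness corresponding to degree $n+1$ and the neighborhood $P := V - 1$ of $-1$. Let $R$ be the localization of the subring $\mathbb{Z} + V$ of $K$ at the multiplicative subset $1 + V$; the closure properties of $V$ ensure that $R$ is a local subring of $K$ with maximal ideal $V$, and $\operatorname{Frac}(R) = K$ by Fact~\ref{fact:open subring}. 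Henselianity of $R$ then follows from Fact~\ref{fact:hensel-equiv}(3): for $a_0, \ldots, a_{n-1} \in \mathfrak{m}_R = V \subseteq O_n$, the gt-henselian property yields a root of $X^{n+1}+X^n+a_{n-1}X^{n-1}+\ldots+a_0$ in $V - 1 = \mathfrak{m}_R - 1$.

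For the locally bounded corollary: if $\uptau$ is also locally bounded, choose the initial neighborhood $U$ inside a $\uptau$-bounded neighborhood of $0$. Then $V$, and hence $R$, is $\uptau$-bounded; since $V \subseteq R$ is open, $R$ is an open bounded subring of $K$, and by Fact~\ref{fact:open subring 2} the $R$-adic topology coincides with $\uptau$. The converse direction (gt-henselianity of any $R$-adic topology for henselian local $R$) is an instance of (2) $\Rightarrow$ (1).

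The main obstacle is the self-referential construction of $V$, since the gt-henselian witness $O_n$ depends on $V$ through $P = V - 1$. I would address this by iterating: start with $V_0 := U$, and set $V_{k+1} := V_k \cap \bigcap_n O_n^{(k)}$, where $O_n^{(k)}$ is the gt-henselian witness for degree $n+1$ and $P = V_k - 1$; the countable intersection remains an open neighborhood by $\omega$-completeness. A careful argument shows this procedure stabilizes to produce a suitable $V$, with the closure conditions on $V$ enforced at each stage by further continuity-based shrinking. Without $\omega$-completeness, the countable intersections in the iteration would collapse and the construction would fail, which is why $\omega$-completeness is essential to this direction.
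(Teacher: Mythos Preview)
Your argument for $(2)\Rightarrow(1)$ is correct and matches the paper.  The iterative strategy for $(1)\Rightarrow(2)$ is also the right idea, and is close to what the paper does: build a descending sequence of neighbourhoods using $\omega$-completeness so that the intersection is simultaneously a maximal ideal of a local ring and a witness for the henselian root condition of Fact~\ref{fact:hensel-equiv}.  (The word ``stabilizes'' is misleading --- the sequence need not become constant; one takes the intersection and uses that a degree $n+1$ polynomial has only finitely many roots to pigeonhole a root into $\bigcap_k(V_k-1)$.)

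There is, however, a genuine gap in characteristic zero: the ring you form need not be local.  You take $R$ to be the localization of $\mathbb{Z}+V$ at $1+V$.  But in characteristic zero one may well have $\mathbb{Z}\cap V=\{0\}$, so that $(\mathbb{Z}+V)/V\cong\mathbb{Z}$, and since the image of $1+V$ in this quotient is $\{1\}$, the localization does nothing: you are left with a ring whose spectrum still surjects onto $\Spec\mathbb{Z}$.  (Think of $K=\mathbb{Q}((t))$ and $V=t\mathbb{Q}[[t]]$: then $\mathbb{Z}+V=\mathbb{Z}+t\mathbb{Q}[[t]]$ has a distinct maximal ideal $(p,t)$ for every rational prime~$p$, and all of these miss $1+t\mathbb{Q}[[t]]$.)  The closure properties you list for $V$ --- additive subgroup, multiplicatively closed, $-1\notin V$, $(1+V)^{-1}\subseteq 1+V$ --- do not prevent this.

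The paper fixes this by working with the prime subfield $K_{\mathrm{pr}}$ rather than $\mathbb{Z}$: it enforces $r\cdot P\subseteq P$ for every $r\in K_{\mathrm{pr}}$ during the iteration (which is possible since $K_{\mathrm{pr}}$ is countable and $\uptau$ is $\omega$-complete), and then $K_{\mathrm{pr}}+P$ is genuinely local with residue field $K_{\mathrm{pr}}$.  A similar issue recurs in your locally bounded argument: ``$V$ bounded, hence $R$ bounded'' is not automatic once $R$ properly contains $V$.  The paper observes that $K_{\mathrm{pr}}+P$ is a \emph{countable} union of bounded sets $\alpha+P$ and invokes $\omega$-completeness once more to conclude boundedness.
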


\noindent
Hence an $\omega$-complete gt-henselian field topology is a union of henselian field topologies.
For the proof of the proposition, we partly follow the proof of \cite[Theorem 2.2]{Prestel1978}, see also \cite[Theorem 7.2]{Prestel1978}.

\begin{proof}
Let us first assume that (2) holds and show that this implies (1).
Any $\uptau$-neighbourhood $P \subseteq K$ of $-1$ contains a set $-1 + I$, where $I$ is a henselian ideal which is a $\tau$-neighbourhood of $0$.
Applying condition (3) from Fact~\ref{fact:hensel-equiv}, we see that $O = I$ satisfies the condition from Definition~\ref{def:gt-henselian}.
Thus $\uptau$ is gt-henselian.

For the converse direction,
let us suppose that $\uptau$ is gt-henselian.
We wish to show that (2) holds.
We fix a neighbourhood $Q$ of zero and construct an open henselian ideal $P \subseteq K$ which is contained in $Q$.
We use Fact~\ref{fact:hensel-equiv} to show that $P$ is a henselian ideal.
Let $K_{\mathrm{pr}}$ be the prime subfield~of~$K$.
\begin{claim-star}
Suppose that $O$ is a neighbourhood of $-1$.
Then there is a neighbourhood $P \subseteq Q$ of zero such that:
\begin{enumerate}
\item $X^{n + 1} + X^n + a_{n - 1} X^{n - 1} + \ldots + a_1 X +  a_0$ has a root in $O$ when $a_0,\ldots,a_{n - 1} \in P$,
\item $K_\mathrm{pr} + P$ is a local subring of $K$ with fraction field $K$ and maximal ideal $P$.
\end{enumerate}
\end{claim-star}

\begin{claimproof}
By gt-henselianity, for every $n \ge 2$ we may fix a neighbourhood $U_n$ of $0$ such that if $a_0,\ldots,a_{n - 1} \in U_n$ then $X^{n + 1} + X^n + a_{n - 1} X^{n - 1} + \ldots + a_1 X + a_0$ has a root in $K$.
By $\omega$-completeness there is a neighbourhood $U$ of zero such that $U \subseteq U_n$ for all $n$.
We may suppose that $U \subseteq Q$ and that $U$ does not contain $1$.
Let $r_1,r_2,\ldots$ be an enumeration of $K_\mathrm{pr}$.
Construct a descending sequence $(P_i : i \in \Nn)$ of open neighbourhoods of zero such that $P_0 = U$, and for all $i \ge 1$ the sets $P_i + P_i$, $P_i - P_i$, $P_i \cdot P_i$, and $r_1 P_i, \ldots, r_i P_i$ are all contained in $P_{i - 1}$ and $(1 + P_i)^{-1} \subseteq 1 + P_{i - 1}$.
By $\omega$-completeness $P := \bigcap_{i \in \Nn} P_i$ is a neighbourhood of zero.
The proof of \cite[Theorem~2.2]{Prestel1978} shows that $K_\mathrm{pr} + P$ is a local subring of $K$ with maximal ideal $P$.
Finally, $K_\mathrm{pr} + P$ is open so Fact~\ref{fact:open subring} shows that $K = \Frac(K_\mathrm{pr} + P)$.
\end{claimproof}
\noindent
Inductively construct sequences $(P_i : i \in \Nn)$, $(O_i:i \in \Nn)$ of open neighbourhoods of $0$, $-1$, respectively such that $P_0 \subseteq Q$, $O_0 \subseteq O$ and for each $i \in \Nn$ and $n \ge 1$:
\begin{enumerate}
\item $X^{n + 1} + X^n + a_{n - 1} X^{n - 1} + \ldots + a_1 X + a_0$ has a root in $O_i$ for any $a_0,\ldots,a_{n - 1} \in P_i$,
\item $K_{\mathrm{pr}} + P_i$ is a local ring with maximal ideal $P_i$ and fraction field $K$, 
\item $P_i \subseteq O_i - 1$ and $O_{i + 1} \subseteq P_i + 1$.
\end{enumerate}
Let $P := \bigcap_{i \in \Nn} P_i$ and $O := \bigcap_{i \in \Nn} O_i$.
Note that $O = P + 1$.
By $\omega$-completeness $P$ is a neighbourhood of $0$.
Let $R = K_\mathrm{pr} + P$.
Since $K_{\mathrm{pr}} + P_i$ is a local ring with maximal ideal $P_i$ for each $i$, we easily check that $R$ is a ring and $P$ is an ideal with residue field $K_{\mathrm{pr}}$.
Furthermore, every element of $1 + P$ is invertible in $R$, since we have $(1+P_i)^{-1} \subseteq 1 + P_i \subseteq R$ for every $i$.
It follows that $R$ is a local ring with maximal ideal $P$.
As $P$ is a neighbourhood of zero, $R$ is open.
By Fact~\ref{fact:open subring} $K = \Frac(K_\mathrm{pr} + P)$.
Note that $X^{n + 1} + X^n + a_{n - 1} X^{n - 1} + \ldots + a_1 X + a_0$ has a root in $P + 1$ for every $a_0,\ldots,a_{n - 1} \in P$.
Hence $R$ is henselian by Fact~\ref{fact:hensel-equiv}.

We now suppose that $\uptau$ is locally bounded.
We take $Q$ in the construction above to be bounded, hence $P$ is bounded.
Then $\alpha + P$ is bounded for all $\alpha \in K_\mathrm{pr}$, so $K_\mathrm{pr} + P$ is bounded as a countable union of bounded sets, since $(K, \uptau)$ is $\omega$-complete.

An application of Fact~\ref{fact:open subring 2} shows that $\uptau$ is the $R$-adic topology.
\end{proof}

\begin{corollary}
\label{cor:azumayan0}
Suppose that $\uptau$ is locally bounded.
Let $(K^*,\uptau^*)$ be locally equivalent to $(K,\uptau)$ and $\omega$-complete.
Then $\uptau$ is gt-henselian if and only if $\uptau^*$ is induced by a henselian local ring.
\end{corollary}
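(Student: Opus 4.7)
The plan is to chain together three ingredients already established in the paper: the transfer principle for local equivalence (Lemma~\ref{lem:transfer}), a corresponding transfer for local boundedness, and Proposition~\ref{prop:t-hensel} which gives the $\omega$-complete locally bounded characterization of gt-henselianity in terms of henselian local rings.

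First I would observe that local boundedness is expressible by a local sentence in the Prestel--Ziegler formalism: it asserts the existence of a neighbourhood $U$ of $0$ such that for every neighbourhood $V$ of $0$ there exists a neighbourhood $W$ of $0$ with $W \cdot U \subseteq V$. All of the quantifiers here are over neighbourhoods of $0$, so local equivalence of $(K, \uptau)$ with $(K^*, \uptau^*)$ implies that $\uptau^*$ is locally bounded as well. By Lemma~\ref{lem:transfer}, local equivalence also preserves gt-henselianity in both directions.

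Given this, the proof is essentially immediate. Assume first that $\uptau$ is gt-henselian. Then $\uptau^*$ is gt-henselian by Lemma~\ref{lem:transfer}, locally bounded by the previous paragraph, and $\omega$-complete by hypothesis; so Proposition~\ref{prop:t-hensel} (applied to $\uptau^*$) shows that $\uptau^*$ is the $R$-adic topology for some henselian local subring $R \subseteq K^*$ with $\operatorname{Frac}(R) = K^*$. Conversely, if $\uptau^*$ is induced by such a henselian local ring, then by Proposition~\ref{prop:t-hensel} again $\uptau^*$ is gt-henselian, and Lemma~\ref{lem:transfer} transfers gt-henselianity back to $\uptau$.

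I do not expect any real obstacle here: the only delicate point is checking that local boundedness is indeed expressible as a local sentence, but this is entirely routine once one accepts the Prestel--Ziegler logic used throughout Section~\ref{section:t-henselian}. The corollary is essentially the combination of Proposition~\ref{prop:t-hensel} with the standard ultrapower technique for replacing an arbitrary field topology by a locally equivalent $\omega$-complete one.
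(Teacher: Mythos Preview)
Your proposal is correct and follows essentially the same route as the paper: apply Lemma~\ref{lem:transfer} to transfer gt-henselianity across local equivalence, then invoke Proposition~\ref{prop:t-hensel} on the $\omega$-complete side. The paper's proof is slightly more terse and does not spell out the transfer of local boundedness (which is indeed needed to apply the final clause of Proposition~\ref{prop:t-hensel} to $\uptau^*$), so your observation that local boundedness is expressible by a local sentence is a worthwhile clarification rather than a deviation.
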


\begin{proof}
Suppose $\uptau$ is gt-henselian.
By Lemma~\ref{lem:transfer} $\uptau^*$ is gt-henselian and by Proposition~\ref{prop:t-hensel} $\uptau^*$ is induced by a henselian local ring.
Conversely, if $\uptau^*$ is induced by a henselian local ring, then $\uptau^*$ is gt-henselian, hence $\uptau$ is gt-henselian by Lemma~\ref{lem:transfer}.
\end{proof}

\subsection{When is the \'etale open topology induced by a locally bounded field topology?}
\label{section:when is}

\begin{lemma}
\label{lem:azu-refine}
If $\uptau$ is gt-henselian then $\uptau$ refines the \'etale open topology.
If $\uptau$ induces the \'etale open topology then $\uptau$ is gt-henselian.
\end{lemma}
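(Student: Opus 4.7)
The plan is to deduce both implications from the characterization of gt-henselianity given in Proposition~\ref{prop:general}(\ref{item:1}): namely, $\uptau$ is gt-henselian if and only if the map $V(K) \to W(K)$ is $\uptau$-open for every étale morphism $V \to W$ of $K$-varieties. Granted this, both directions of the lemma reduce to straightforward comparisons with the defining basis of $\seak$.

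For the first implication, I will assume $\uptau$ is gt-henselian and fix a $K$-variety $W$. A basis for the $\seak$-topology on $W(K)$ consists of the étale images $f(V(K))$ with $f \colon V \to W$ étale, so it suffices to show each such image is $\uptau$-open. But this follows immediately from Proposition~\ref{prop:general}(\ref{item:1}), since $V(K)$ is (trivially) $\uptau$-open in itself and the induced map $V(K) \to W(K)$ is $\uptau$-open. Hence $\uptau$ refines $\seak$ on $W(K)$, for every $W$.

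For the converse, I will assume $\uptau$ induces $\seak$, which means the two topologies agree on $V(K)$ for every $K$-variety $V$. Given any étale morphism $f \colon V \to W$, Fact~\ref{fact:basic}(2) tells us that the induced map $V(K) \to W(K)$ is $\seak$-open; since $\uptau$ and $\seak$ coincide on both source and target, the map is also $\uptau$-open. This verifies condition~(\ref{item:1}) of Proposition~\ref{prop:general} for $\uptau$, so $\uptau$ is gt-henselian.

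I expect no real obstacle: the lemma is essentially a repackaging of Proposition~\ref{prop:general}(\ref{item:1}) in topological language, combined with the definition of the basis of étale images and the openness of étale maps on $K$-points (Fact~\ref{fact:basic}(2)).
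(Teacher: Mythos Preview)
Your proof is correct and matches the paper's own argument essentially line for line: both directions are deduced from Proposition~\ref{prop:general}(\ref{item:1}), with Fact~\ref{fact:basic}(2) supplying the $\seak$-openness of \'etale maps for the converse.
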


\begin{proof}
Suppose $\uptau$ is gt-henselian and $V$ is a $K$-variety.
By Proposition~\ref{prop:general}, any étale image in $V(K)$ is $\uptau$-open, and so $\uptau$ refines the $\seak$-topology on $V(K)$.
If $\uptau$ induces the \'etale open topology then Fact~\ref{fact:basic}(2) shows that if $V \to W$ is an \'etale morphism of $K$-varieties then $V(K) \to W(K)$ is $\uptau$-open.
Now again apply Proposition~\ref{prop:general}.
\end{proof}

\noindent
Corollary~\ref{cor:large} follows from Lemma~\ref{lem:azu-refine} and the fact that $K$ is large if and only if the \'etale open topology on $K$ is not discrete~\cite[Theorem~C]{firstpaper}.

\begin{corollary}
\label{cor:large}
If $K$ admits a gt-henselian field topology then $K$ is large.
\end{corollary}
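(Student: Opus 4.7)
The plan is to combine Lemma~\ref{lem:azu-refine} with the characterization of largeness via the étale open topology given in \cite[Theorem~C]{firstpaper}. Concretely, suppose $\uptau$ is a gt-henselian field topology on $K$. By Lemma~\ref{lem:azu-refine}, $\uptau$ refines the $\Sa E_K$-topology on $K$; that is, every $\Sa E_K$-open subset of $K$ is $\uptau$-open.

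The key observation is then purely formal: if the $\Sa E_K$-topology on $K$ were discrete, every singleton would be $\Sa E_K$-open, hence $\uptau$-open, and so $\uptau$ itself would be discrete. This contradicts our standing convention (Section~\ref{section:conventions and background}) that field topologies are non-discrete. Therefore the $\Sa E_K$-topology on $K$ is not discrete.

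Finally, invoking \cite[Theorem~C]{firstpaper} (as recalled in Section~\ref{section:refinement} of the present paper), the non-discreteness of $\Sa E_K$ on $K$ is equivalent to $K$ being a large field, and we conclude. There is essentially no obstacle here; the only point requiring care is getting the direction of refinement right so as to transport non-discreteness from $\uptau$ down to $\Sa E_K$.
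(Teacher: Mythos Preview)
Your argument is correct and is exactly the paper's own proof, just spelled out in slightly more detail: the paper likewise deduces the corollary from Lemma~\ref{lem:azu-refine} together with the equivalence of largeness and non-discreteness of $\Sa E_K$ from \cite[Theorem~C]{firstpaper}.
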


\noindent
See \cite{Pop-little} for a definition and an account of largeness.
Corollary~\ref{cor:large} generalizes the theorem that the fraction field of a henselian local domain is large~\cite{Pop-henselian}.
Corollary~\ref{cor:large} was also observed in slightly more general form in~\cite[Theorem 1.8]{Pop-little}.

\begin{proposition}
\label{prop:azumayan}
Suppose that $\uptau$ is locally bounded.
Then the following are equivalent:
\begin{enumerate}
\item $\uptau$ induces the \'etale open topology over $K$.
\item $\uptau$ is gt-henselian and some nonempty \'etale image in $K$ is bounded.
\end{enumerate}
In particular the following are equivalent when $R$ is a local domain with fraction field $K$:
\begin{enumerate}
\setcounter{enumi}{2}
\item The $R$-adic topology agrees with the \'etale open topology.
\item The $R$-adic topology is gt-henselian and $R$ contains a nonempty \'etale image.
\end{enumerate}
\end{proposition}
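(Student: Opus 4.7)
The plan is to prove the first equivalence (1) $\Leftrightarrow$ (2) directly and then deduce the specialization (3) $\Leftrightarrow$ (4) by applying it to $\uptau$ equal to the $R$-adic topology.

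For (1) $\Rightarrow$ (2), Lemma~\ref{lem:azu-refine} gives gt-henselianity of $\uptau$ for free. For the bounded étale image, I take any $\uptau$-bounded neighbourhood $U$ of $0$; since by hypothesis $\uptau$ agrees with $\Sa E_K$, $U$ is $\Sa E_K$-open, hence contains a basic open of the étale open topology, i.e.\ an étale image $E$ with $0 \in E$. Then $E$ is nonempty and bounded as a subset of $U$.

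The main work lies in (2) $\Rightarrow$ (1). One refinement (that $\uptau$ refines $\Sa E_K$) is again immediate from Lemma~\ref{lem:azu-refine}. For the converse, by Fact~\ref{fact:refine} it is enough to show that $\Sa E_K$ on $K$ refines $\uptau$. Let $E$ be a nonempty $\uptau$-bounded étale image; after translating by some element of $E$ (which preserves both $\uptau$-boundedness in a ring topology and being an étale image via Fact~\ref{fact:basic}(3)), I may assume $0 \in E$. Since both topologies are translation invariant, it suffices to check that every $\uptau$-neighbourhood $U$ of $0$ contains an $\Sa E_K$-neighbourhood of $0$. The key step is that by $\uptau$-boundedness of $E$ there is a $\uptau$-neighbourhood $W$ of $0$ with $W \cdot E \subseteq U$, and picking any $\alpha \in W \setminus \{0\}$ (which exists by non-discreteness of $\uptau$), the set $\alpha E$ is an étale image containing $0$ by Fact~\ref{fact:basic}(3) and is contained in $U$. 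This is the main technical point: the essential feature of local boundedness being exploited is that scalar dilates of any fixed bounded neighbourhood of $0$ generate the full neighbourhood filter at $0$, so that a single bounded étale image suffices to seed a basis of $\Sa E_K$-neighbourhoods inside arbitrary $\uptau$-neighbourhoods of $0$.

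For the specialization to the $R$-adic topology: since $R$ is local and $R \subsetneq K$, its maximal ideal is nonzero, so $J(R) \neq 0$ and Fact~\ref{fact:jacobson-field} shows that the $R$-adic topology is a field topology. It is clearly locally bounded since $R$ itself is a bounded neighbourhood of $0$. Applying the already established (1) $\Leftrightarrow$ (2) with $\uptau$ the $R$-adic topology, condition (1) becomes (3), and condition (2) specializes to ``the $R$-adic topology is gt-henselian and some nonempty étale image is $R$-adically bounded''. A subset of $K$ is $R$-adically bounded if and only if some nonzero scalar multiple of it is contained in $R$; combined with Fact~\ref{fact:basic}(3), which shows that scalar multiples of étale images are étale images, this translates ``bounded nonempty étale image exists'' into ``$R$ contains a nonempty étale image'', giving (4).
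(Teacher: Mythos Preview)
Your proof is correct and follows essentially the same approach as the paper.  There is one small but genuine difference in the argument for (2) $\Rightarrow$ (1): the paper first observes that the bounded \'etale image $U$ is $\uptau$-open (using gt-henselianity via Proposition~\ref{prop:general}) and then concludes that $\{\alpha U + \beta : \alpha \in K^\times, \beta \in K\}$ is a basis for $\uptau$, each of whose members is $\Sa E_K$-open.  You instead work directly with boundedness: given a $\uptau$-neighbourhood $U$ of $0$, you use boundedness of $E$ to find $\alpha \neq 0$ with $\alpha E \subseteq U$, without ever needing that $E$ is $\uptau$-open.  This is a mild simplification, since it avoids a second appeal to gt-henselianity (beyond the one already implicit in Lemma~\ref{lem:azu-refine}).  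Your treatment of the specialization (3) $\Leftrightarrow$ (4) is also more explicit than the paper's, which simply says the second equivalence follows from the first and the definitions.
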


\begin{proof}
The second equivalence follows easily from the first and the definitions.
We prove the first equivalence.
Suppose (1).
Then $\uptau$ is gt-henselian by Lemma~\ref{lem:azu-refine}.
As $\uptau$ is locally bounded we may fix a bounded open $U \subseteq K$.
Then $U$ contains a nonempty \'etale image, which is also bounded.
Thus (2) holds.
Now suppose (2).
By Lemma~\ref{lem:azu-refine} and Fact~\ref{fact:refine} it suffices to show that the $\seak$-topology on $K$ refines $\uptau$. 
Let $f \colon V \to \Aa^1$ be an \'etale morphism of $K$-varieties such that $U = f(V(K))$ is bounded.
By Proposition~\ref{prop:general} $U$ is $\uptau$-open, hence $(\alpha U + \beta : \alpha \in K^\times, \beta \in K)$ is a basis for $\uptau$.
Fact~\ref{fact:basic}(3) shows that each $\alpha U + \beta$ is $\seak$-open.
\end{proof}

\begin{corollary}
\label{cor:existential}
Suppose that $K$ is perfect and $\uptau$ is a locally bounded field topology on $K$.
Then the following are equivalent:
\begin{enumerate}
\item $\uptau$ induces the \'etale open topology over $K$.
\item $\uptau$ is gt-henselian and $f(V(K))$ is bounded for some $K$-variety morphism $f \colon V \to \Aa^1$ with $f(V(K))$ infinite.
\end{enumerate}
In particular the following are equivalent when $R$ is a local domain with fraction field $K$:
\begin{enumerate}
\item The $R$-adic topology agrees with the \'etale open topology.
\item The $R$-adic topology is gt-henselian and $f(V(K)) \subseteq R$ for some $K$-variety morphism $f \colon V \to \Aa^1$ with $f(V(K))$ infinite.
\end{enumerate}
\end{corollary}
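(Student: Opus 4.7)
The second equivalence reduces to the first: given a morphism $f$ with $f(V(K)) \subseteq aR + b$ for some $a \in K^\times$ and $b \in K$, the morphism $f'(v) = (f(v) - b)/a$ has $f'(V(K)) \subseteq R$ of the same cardinality. For the first equivalence, we apply Proposition~\ref{prop:azumayan}. The direction (1) $\Rightarrow$ (2) is immediate: the nonempty bounded étale image furnished by Proposition~\ref{prop:azumayan} is of the form $f(V(K))$ for an étale (hence a) morphism $f\colon V \to \Aa^1$, and as a nonempty open set in the Hausdorff non-discrete topology $\uptau$ it is infinite.

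For (2) $\Rightarrow$ (1), by Proposition~\ref{prop:azumayan} it suffices to find a nonempty $\uptau$-bounded étale image in $K$. The plan is to locate one inside the already $\uptau$-bounded set $f(V(K))$. Replacing $V$ with the Zariski closure of $V(K)$ inside an irreducible component on which $f$ retains infinite image of $K$-points, and iterating if necessary, we may assume $V$ is integral with $V(K)$ Zariski dense in $V$ and $f$ dominant. Restricting further to an affine open in the smooth locus (which is Zariski open dense because $K$ is perfect and $V$ is geometrically reduced), we may additionally assume $V$ is smooth, affine, and integral, so that $K[V]$ is normal.

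In characteristic zero, $f$ is automatically generically separable; by generic smoothness a dense open $U \subseteq V$ satisfies $f|_U$ smooth. Zariski density of $V(K)$ supplies $p \in U(K)$, and locally near $p$ the smooth morphism $f|_U$ factors as an étale morphism to $\Aa^1 \times \Aa^{\dim V - 1}$ composed with the first projection. The resulting image on $K$-points is $\seak$-open in $K$ (smooth morphisms give $\seak$-open maps, cf.~the proof of Proposition~\ref{prop:general}) and so contains a nonempty étale image inside $f(V(K))$. In positive characteristic $p$, the morphism $f$ may be inseparable; we absorb this by choosing $n \geq 0$ maximal with $f \in K[V]^{p^n}$ and writing $f = g^{p^n}$ for $g \in K[V]$ (well defined by normality of $K[V]$, since $g \in K(V)$ satisfies the integrality relation $X^{p^n} - f = 0$). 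Then $g \notin K[V]^p$, so $\mathrm{d}g \not\equiv 0$ on $V$, and the char-zero argument applied to $g$ produces a nonempty étale image $E = h(W(K)) \subseteq g(V(K))$, arising from an étale $h\colon W \to \Aa^1$.

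The main obstacle is then to push $E$ through Frobenius to lie inside $f(V(K)) = \phi^n(g(V(K)))$, where $\phi(x) = x^p$. For this we observe that, since $K$ is perfect, the Frobenius twist $h^{(p^n)}\colon W^{(p^n)} \to \Aa^{1,(p^n)} = \Aa^1$ is again étale (being a base change of $h$), and under the canonical bijections $W^{(p^n)}(K) \cong W(K)$ and $\Aa^{1,(p^n)}(K) \cong \Aa^1(K)$ available over a perfect field, its image on $K$-points equals $\phi^n(h(W(K))) = \phi^n(E)$. Hence $\phi^n(E)$ is a nonempty étale image in $K$ contained in $f(V(K))$, and Proposition~\ref{prop:azumayan} delivers (1).
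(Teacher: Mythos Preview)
Your proof is correct but takes a genuinely different route from the paper's. The paper's argument is a one-liner invoking \cite[Theorem~B]{secondpaper}: for $K$ perfect and any morphism $f\colon V \to \Aa^1$, the image $f(V(K))$ is the union of a definable $\Sa E_K$-open set and a finite set. Since $f(V(K))$ is infinite, the open part is nonempty and so contains a nonempty (hence bounded) \'etale image, and Proposition~\ref{prop:azumayan} finishes. Your approach instead manufactures the \'etale image inside $f(V(K))$ by hand---generic smoothness in characteristic zero, and in positive characteristic a Frobenius factorization $f = g^{p^n}$ followed by twisting the resulting \'etale map through the relative Frobenius. In effect you reprove exactly the fragment of \cite[Theorem~B]{secondpaper} needed here. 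The gain is self-containment; the cost is length and several details to verify (for instance, the maximal $n$ with $f \in K[V]^{p^n}$ exists because $f$, being dominant, is transcendental over $K$, and $\bigcap_n K(V)^{p^n}$ is algebraic over $K$---a point worth making explicit). The paper's route is shorter and ties the corollary to a structural result of independent interest.
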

\begin{proof}
  This follows from Proposition~\ref{prop:azumayan} and the fact that if $K$ is perfect and $f \colon V \to \Aa^1$ is a $K$-variety morphism then $f(V(K))$ is the union of a definable $\seak$-open set and a finite set, see~\cite[Theorem B]{secondpaper}.
\end{proof}

\begin{lemma}\label{lem:elem-equiv-EK-loc-bdd}
Suppose $K \equiv K'$ and $\Sa E_K$ is induced by some locally bounded field topology $\uptau$.
Then $\Sa E_{K'}$ is induced by a locally bounded field topology $\uptau'$.
Furthermore, $(K, \uptau)$ is locally equivalent to $(K', \uptau')$.
\end{lemma}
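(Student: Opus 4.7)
The plan is to extract from $\uptau$ a non-empty $\uptau$-bounded étale image $U \subseteq K$ via Proposition~\ref{prop:azumayan}, from which $\uptau$ is parametrically definable, and then transfer $U$ to $K'$ using $K \equiv K'$. First, using Proposition~\ref{prop:azumayan} and translating by Fact~\ref{fact:basic}(3), we may assume $0 \in U$, so that $\{ aU + b : a \in K^\times, b \in K \}$ is a basis for $\uptau$ and $\{ aU : a \in K^\times \}$ is a fundamental system of $\uptau$-neighbourhoods of $0$ (using boundedness of $U$ and non-discreteness of $\uptau$).

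Since $U$ is the image of an étale morphism from an affine variety, there is a first-order formula $\phi(x, \vec z)$ in $\mathcal{L}_{\mathrm{ring}}$ and parameters $\vec c \in K$ with $U = \phi(K, \vec c)$; moreover $\phi$ may be chosen so that, for generic $\vec z$, $\phi(K, \vec z)$ remains an étale image via a uniformly given morphism. Let $\chi(\vec z)$ be a first-order scheme in $\mathcal{L}_{\mathrm{ring}}$ asserting the following about $U_{\vec z} := \phi(K, \vec z)$: $0 \in U_{\vec z}$; $U_{\vec z}$ is an étale image; and $\{ aU_{\vec z} + b : a \in K^\times, b \in K \}$ is a basis for a Hausdorff non-discrete ring topology $\uptau_{\vec z}$ on $K$ in which $U_{\vec z}$ is bounded and which is gt-henselian. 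Each of these conditions is first-order (or a first-order scheme for gt-henselianity): for instance, gt-henselianity becomes the assertion that for every $n \geq 1$ and every $a \in K^\times$, there exists $c \in K^\times$ such that for every $(a_0, \dotsc, a_{n-1}) \in (cU_{\vec z})^n$, the polynomial $X^{n+1} + X^n + a_{n-1}X^{n-1} + \dotsb + a_0$ has a root in $-1 + aU_{\vec z}$. By hypothesis $K \models \chi(\vec c)$, so $K \models \exists \vec z\, \chi(\vec z)$, and $K \equiv K'$ then yields $\vec c' \in K'$ with $K' \models \chi(\vec c')$. Setting $U' := \phi(K', \vec c')$ and letting $\uptau'$ be the topology on $K'$ with basis $\{ aU' + b : a \in (K')^\times, b \in K' \}$ gives a locally bounded gt-henselian field topology on $K'$ admitting $U'$ as a non-empty bounded étale image, so by Proposition~\ref{prop:azumayan} $\uptau'$ induces $\Sa E_{K'}$.

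For the ``furthermore'' clause, we refine the choice so that $(K, \vec c) \equiv (K', \vec c')$ in $\mathcal{L}_{\mathrm{ring}}(\vec c)$; this may be arranged by realizing the type of $\vec c$ over $\emptyset$ in a sufficiently saturated common elementary extension and then descending. Any Prestel--Ziegler local sentence $\sigma$ holding in $(K, \uptau)$ then translates, via the definable basis $\{ aU_{\vec z} + b \}$ and fundamental system $\{ aU_{\vec z} \}$ at $0$, into a first-order sentence $\tilde\sigma(\vec z)$ of $\mathcal{L}_{\mathrm{ring}}$ satisfied at $\vec c$ in $K$; by $(K, \vec c) \equiv (K', \vec c')$, $\tilde\sigma(\vec c')$ holds in $K'$, which back-translates to $\sigma$ holding in $(K', \uptau')$. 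The main technical obstacle is exactly this bookkeeping: confirming that the topological axioms satisfied by $\uptau$ and the Prestel--Ziegler local sentences admit uniform first-order descriptions parametrized by $\vec z$. Gt-henselianity is the principal interesting case, handled above; the remaining axioms (continuity of the ring operations, Hausdorffness, boundedness) reduce routinely to first-order quantification over $K$ once the basis is fixed.
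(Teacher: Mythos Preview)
Your argument has a genuine gap at the transfer step. You correctly note that gt-henselianity is not a single first-order condition but an infinite \emph{scheme} (one axiom for each degree $n$), so your $\chi(\vec z)$ is an infinite conjunction. The inference ``$K \models \chi(\vec c)$, so $K \models \exists \vec z\, \chi(\vec z)$, and $K \equiv K'$ then yields $\vec c' \in K'$ with $K' \models \chi(\vec c')$'' fails: the sentence $\exists \vec z\, \chi(\vec z)$ is not first-order, and elementary equivalence only gives you, for each \emph{finite} fragment $\chi_0 \subseteq \chi$, some $\vec c'_0 \in K'$ with $K' \models \chi_0(\vec c'_0)$. There is no reason a single $\vec c'$ should work for the whole scheme. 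Your proposed repair --- realize $\operatorname{tp}(\vec c/\emptyset)$ in a saturated common elementary extension ``and then descend'' --- does not help: there is no mechanism for descending from an elementary extension of $K'$ back into $K'$ itself, and $K \equiv K'$ gives you no saturation of $K'$.

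The paper avoids this problem by passing to a \emph{parameter-free} definable basis. It fixes $d$ large enough that some bounded \'etale image $U$ lies in the family $\mathcal U_d$ of standard \'etale images of degree $\le d$, and then singles out the subfamily $\mathcal V_d \subseteq \mathcal U_d$ of those $V$ whose scaled translates cover every member of $\mathcal U_d$. The point is that $\mathcal V_d$ is cut out by a single parameter-free formula, and in $K$ every $V \in \mathcal V_d$ generates the same topology $\uptau$. Hence each instance of gt-henselianity, and each axiom for a field topology, becomes a parameter-free sentence of the form ``for all $V \in \mathcal V_d$, \dots'', and these transfer one at a time to $K'$. Non-emptiness of $\mathcal V'_d$ is itself a single sentence, so any choice of $U' \in \mathcal V'_d$ yields the desired $\uptau'$. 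Local equivalence then comes for free, since $\mathcal V_d$ and $\mathcal V'_d$ are defined by the same formula and serve as neighbourhood bases for $\uptau$ and $\uptau'$. Your idea of encoding $\uptau$ via a definable $U$ is the right starting point, but the move from a parameter-dependent $U$ to a parameter-free family is exactly what is needed to make the transfer go through.
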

\begin{proof}
  We first argue that $\cE_{K'}$ on $K'=\Aa^1_{K'}(K')$ is a locally bounded field topology.
  For every $d > 0$, let
  \begin{align*} \mathcal{U}_d := \{ & U \subseteq K \colon U = \{ x \in K \colon \exists y \in K, f(x,y) = 0, g(x,y) \neq 0, \frac{\partial f}{\partial Y}(x,y) \neq 0 \} \\ & \text{ for some $f,g \in K[X,Y]$ of total degree $\leq d$} \}. \end{align*}
  Every $U \in \mathcal{U}_d$ is an étale open subset of $K = \Aa^1_K(K)$, and by Fact \ref{fact:locally standard} every étale open subset of $K$ is a union of elements of $\bigcup_d \mathcal{U}_d$, so that the étale open topology on $K = \Aa^1_K(K)$ has $\bigcup_d \mathcal{U}_d$ as a basis.
  Let $\mathcal{V}_d$ be the collection of $V \in \mathcal{U}_d$ such that $V \neq \emptyset$ and every $U \in \mathcal{U}_d$ is a union of scaled translates of $V$, i.e.\ sets of the form $a V + b$ with $a \in K^\times$, $b \in K$.

  There exists some $\uptau$-bounded étale image $0 \in U \subseteq K$, and by possibly shrinking $U$ we may assume that $U \in \mathcal{U}_d$ for some $d$ which we now fix.
  The scaled translates of $U$ form a basis for the étale open topology on $K = \Aa^1_K(K)$, i.e.\ a basis of $\uptau$.
  In particular $U \in \mathcal{V}_d$, so $\mathcal{V}_d$ is not empty.
  Note that by definition for any other element $V \in \mathcal{V}_d$ we may write $U$ as a union of scaled translates of $V$.
  
  Let $\mathcal{U}'_d, \mathcal{V}'_d$ be the collection of subsets of $K'$ defined analogously as $\mathcal{U}_d$ and $\mathcal{V}_d$ in $K$.
  Since these are definable families and $K' \equiv K$, there exists $U' \in \mathcal{V}'_d$. Moreover, we may assume the scaled translates of $U'$ form a basis of a field topology $\uptau'$ on $K'$.

  Since the families $\mathcal{V}_d$ and $\mathcal{V}'_d$ of subsets of $K$ resp.\ $K'$ are defined by the same (parameter-free) formula, and are bases of $\uptau$ resp.\ $\uptau'$, we have local equivalence of $(K, \uptau)$ and $(K', \uptau')$. In particular, $(K',\uptau')$ is gt-henselian. By Proposition~\ref{prop:azumayan}, $\uptau'$ induces the \'etale open topology on $K'$.
\end{proof}

\noindent
We now want to analyse the situation of a locally bounded $\omega$-complete topology.

\begin{theorem}
\label{thm:azumayan agreement}
Suppose that $\uptau$ is locally bounded and induces the $\seak$-topology.
Let $(K^*,\uptau^*)$ be locally equivalent to $(K,\uptau)$ and $\omega$-complete.
Then $\uptau^*$ is induced by a henselian local ring and $\uptau^*$ induces the \'etale open topology~over~$K^*$.
\end{theorem}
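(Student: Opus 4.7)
The plan is to deduce the theorem by transferring all the relevant properties of $(K, \uptau)$ to $(K^*, \uptau^*)$ via local equivalence, and then invoking Propositions~\ref{prop:azumayan} and~\ref{prop:t-hensel}.

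First I would observe that $\uptau$ is gt-henselian by Lemma~\ref{lem:azu-refine}, so Lemma~\ref{lem:transfer} immediately yields that $\uptau^*$ is gt-henselian as well. Local boundedness of $\uptau^*$ follows from local equivalence too, since the existence of a bounded neighbourhood of $0$ is directly expressible by a local sentence (``$\exists U\ \forall V\ \exists W$ with $W \cdot U \subseteq V$'', where $U, V, W$ range over neighbourhoods of $0$). Combined with $\omega$-completeness of $(K^*, \uptau^*)$, Proposition~\ref{prop:t-hensel} then gives that $\uptau^*$ is induced by a henselian local subring of $K^*$ with fraction field $K^*$, which is the first half of the conclusion. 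For the second half, by Proposition~\ref{prop:azumayan} it suffices to produce a nonempty $\uptau^*$-bounded étale image in $K^*$. Applying that same proposition to $(K, \uptau)$, there is some nonempty $\uptau$-bounded étale image $U \subseteq K = \Aa^1(K)$, and by Fact~\ref{fact:locally standard} I may shrink $U$ and assume it is the image of a basic standard étale morphism, i.e.\
\[ U = \{ y \in K : \exists x \in K,\ f(y,x) = 0,\ g(y,x) \neq 0,\ (\partial f/\partial X)(y,x) \neq 0 \} \]
for some polynomials $f, g \in K[X,Y]$ of total degree at most some fixed integer $d$.

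The key observation, and the only non-routine step, is that the assertion ``there exist $f, g \in K[X,Y]$ of total degree $\leq d$ such that the set $U$ defined by the above formula is nonempty and $\uptau$-bounded'' is a local sentence in the Prestel--Ziegler formalism of \cite[Section 1]{Prestel1978}: the coefficients of $f$ and $g$ are finitely many first-order variables in $K$, nonemptiness of $U$ is expressed by a first-order formula, and boundedness of the parametrically definable set $U$ is expressible by quantifiers over neighbourhoods of $0$ exactly as above. Local equivalence then transfers this assertion to $(K^*, \uptau^*)$, producing a nonempty $\uptau^*$-bounded étale image in $K^*$, and a final application of Proposition~\ref{prop:azumayan} concludes that $\uptau^*$ induces the étale open topology over $K^*$.
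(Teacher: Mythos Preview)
Your proof is correct and proceeds along the same essential lines as the paper's: both transfer gt-henselianity and the existence of a nonempty bounded étale image (expressed via polynomials of bounded degree, as in the paper's families $\mathcal{U}_d$ from Lemma~\ref{lem:elem-equiv-EK-loc-bdd}) across local equivalence. The organization differs slightly: the paper first invokes Lemma~\ref{lem:elem-equiv-EK-loc-bdd} to produce an auxiliary locally bounded topology $\uptau'$ on $K^\ast$ inducing $\Sa E_{K^\ast}$, then shows $\uptau^\ast = \uptau'$ via \cite[Lemma~2.1(f)]{Prestel1978}, and finally appeals to Corollary~\ref{cor:azumayan0}. Your route is more direct, transferring local boundedness and the bounded étale image straight to $\uptau^\ast$ and applying Propositions~\ref{prop:t-hensel} and~\ref{prop:azumayan} without the detour through $\uptau'$; this avoids the comparison lemma at the cost of spelling out the local sentence for local boundedness explicitly. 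One small point worth making precise: when you transfer the formula to $K^\ast$, you should note that the set it defines there is again an étale image (the locus $\{f=0,\ g\cdot\partial f/\partial X\neq 0\}$ projects étale-ly to $\Aa^1$ by the Jacobian criterion, regardless of the particular coefficients), so that Proposition~\ref{prop:azumayan} indeed applies.
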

\begin{proof}
  By Lemma~\ref{lem:elem-equiv-EK-loc-bdd}, there is a locally bounded field topology $\uptau'$ on $K^\ast$ inducing $\Sa E_{K^\ast}$.
  Since $(K^\ast, \uptau^\ast)$ is gt-henselian by local equivalence, $\uptau^\ast$ refines $\uptau'$.
  On the other hand, there exists an étale image in $K'$ which is $\uptau^\ast$-bounded, since the same holds in $(K, \uptau)$.
  Thus $\uptau^\ast = \uptau'$ by \cite[Lemma 2.1(f)]{Prestel1978}.
  The statement now follows from Corollary~\ref{cor:azumayan0}.
\end{proof}

\noindent
Theorem~\ref{thm:azumayan agreement} reduces the question ``When is the \'etale open topology induced by a locally bounded field topology?" to the question ``When does the \'etale open topology agree with the $R$-adic topology for a henselian local ring $R$?".

\begin{proposition}
\label{prop:saturated}
Suppose that $K$ is $\aleph_1$-saturated and suppose that the \'etale open topology over $K$ is induced by a locally bounded field topology on $K$.
Then there is a henselian local subring $R$ of $K$ such that the \'etale open topology over $K$ agrees with the $R$-adic topology. 
\end{proposition}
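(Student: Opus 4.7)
The plan is to leverage $\aleph_1$-saturation to show that any locally bounded field topology $\uptau$ on $K$ which induces $\Sa E_K$ is already $\omega$-complete, thereby allowing direct application of Proposition~\ref{prop:t-hensel} without the detour through a locally equivalent structure as in Theorem~\ref{thm:azumayan agreement}.

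First I would invoke the proof of Proposition~\ref{prop:azumayan} to exhibit a bounded étale image $U = f(V(K)) \subseteq K$, with $f \colon V \to \Aa^1$ étale, such that the sets $\alpha U + \beta$ for $\alpha \in K^\times$, $\beta \in K$ form a basis of $\uptau$. After translating I may assume $0 \in U$, so that a neighbourhood basis at $0$ is given by the sets $\alpha(U - u)$ for $\alpha \in K^\times$, $u \in U$. The crucial point is that $U$ is a $K$-definable subset of $K$ in the language of rings (with parameters taken from the coefficients of $f$ and defining equations of $V$).

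To prove $\omega$-completeness of $\uptau$, let $(O_n)_{n \in \omega}$ be any countable family of $\uptau$-neighbourhoods of $0$, and fix basic neighbourhoods $\alpha_n(U - u_n) \subseteq O_n$ with $\alpha_n \in K^\times$, $u_n \in U$. I consider the countable partial type $\Sigma(X, Y)$ over the parameters $(\alpha_n, u_n)_{n \in \omega}$ consisting of $X \neq 0$, $Y \in U$, and, for each $n$, the universal formula expressing $X(U - Y) \subseteq \alpha_n(U - u_n)$. Each such condition is first-order because $U$ is definable. The type $\Sigma$ is finitely satisfiable since for any finite $F \subseteq \omega$ the intersection $\bigcap_{n \in F} \alpha_n(U - u_n)$ is a $\uptau$-neighbourhood of $0$, and hence contains some basic neighbourhood $\alpha(U - u)$. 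By $\aleph_1$-saturation of $K$, $\Sigma$ is realized, yielding $\alpha \in K^\times$, $u \in U$ with $\alpha(U - u) \subseteq \bigcap_n \alpha_n(U - u_n) \subseteq \bigcap_n O_n$, which exhibits $\bigcap_n O_n$ as a $\uptau$-neighbourhood of $0$.

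With $\omega$-completeness in hand, I would close the argument by observing that $\uptau$ is gt-henselian by Lemma~\ref{lem:azu-refine} (since $\uptau$ induces $\Sa E_K$), and then applying Proposition~\ref{prop:t-hensel} directly to $(K, \uptau)$ to produce the desired henselian local subring $R \subseteq K$ with $\Frac(R) = K$ whose $R$-adic topology equals $\uptau$, which agrees with $\Sa E_K$ by hypothesis. The main subtle point is the first-orderness of the neighbourhood basis description: it is essential that local boundedness plus agreement with $\Sa E_K$ furnishes a single $K$-definable bounded étale image $U$ that generates the whole topology by scaling and translating, so that the type $\Sigma$ above really is expressible as a countable partial type over countably many parameters. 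Everything else amounts to packaging the Prestel--Ziegler construction into a saturation argument.
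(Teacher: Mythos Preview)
Your proof is correct and follows essentially the same approach as the paper's: both arguments fix a definable bounded \'etale image $U$ generating $\uptau$, use $\aleph_1$-saturation to verify $\omega$-completeness, and then invoke the Prestel--Ziegler-style construction. The paper is terser---it takes the simpler neighbourhood basis $\{\alpha U : \alpha \in K^\times\}$ (having arranged $0 \in U$) rather than your $\{\alpha(U-u)\}$, and it packages the final step as an appeal to Theorem~\ref{thm:azumayan agreement} with $(K^\ast,\uptau^\ast)=(K,\uptau)$ rather than citing Lemma~\ref{lem:azu-refine} and Proposition~\ref{prop:t-hensel} separately---but these are cosmetic differences, not substantive ones.
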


\noindent
A field is $\aleph_1$-saturated if any descending sequence of nonempty definable sets has nonempty intersection.
Such fields can for instance be produced using the ultrapower construction, see \cite[Theorem 6.1.1]{ChangKeisler}.

\begin{proof}
  Let $\uptau$ be the locally bounded field topology inducing the \'etale open topology over $K$.
By Theorem~\ref{thm:azumayan agreement} it is enough to show that $\uptau$ is $\omega$-complete.
Fix a $\uptau$-bounded \'etale image $U$ in $K$ which contains $0$.
Then $\mathcal{B} = (\alpha U : \alpha \in K^\times)$ forms a neighbourhood basis for $\uptau$ at zero consisting of definable sets, see \cite[Lemma 2.1 (e)]{Prestel1978}.
By $\aleph_1$-saturation any intersection of countably many elements of $\mathcal{B}$ contains an element of $\mathcal{B}$.
Hence $\uptau$ is $\omega$-complete.
\end{proof}

\noindent
Theorem \ref{thm:intro-loc-bdd-henselian-R-adic} from the introduction follows from the preceding proposition together with Lemma \ref{lem:elem-equiv-EK-loc-bdd}.

\bibliographystyle{amsalpha}
\bibliography{ref}

\end{document}